\newcommand{\E}[0]{\mathbf{E}}
\newtheorem{theorem}{Theorem}[section]
\newtheorem{proposition}[theorem]{Proposition}
\newtheorem{corollary}[theorem]{Corollary}
\newtheorem{lemma}[theorem]{Lemma}
\newenvironment{definition}[1][Definition]{\begin{trivlist}
\item[\hskip \labelsep {\bfseries #1}]}{\end{trivlist}}
\newenvironment{example}[1][Example]{\begin{trivlist}
\item[\hskip \labelsep {\bfseries #1}]}{\end{trivlist}}
\newenvironment{remark}[1][Remark]{\begin{trivlist}
\item[\hskip \labelsep {\bfseries #1}]}{\end{trivlist}}
\titleformat{\section}[block]{\large\scshape\centering}{\thesection.}{1em}{} % Change the look of the section titles
\titleformat{\subsection}[runin]
  {\normalfont\large\bfseries}{\thesubsection}{1em}{}
\titleformat{\subsubsection}[runin]
  {\normalfont\normalsize\bfseries}{\thesubsubsection}{1em}{}
\begin{document}

\title{Random local complex dynamics.}

\author{Lorenzo Guerini and Han Peters}

\maketitle

\begin{abstract}
The study of the dynamics of an holomorphic map near a fixed point is a central topic in complex dynamical systems. In this paper we will consider the corresponding random setting: given a probability measure $\nu$ with compact support on the space of germs of holomorphic maps fixing the origin, we study the compositions $f_n\circ\cdots\circ f_1$, where each $f_i$ is chosen independently with probability $\nu$. As in the deterministic case, the stability of the family of the random iterates is mostly determined by the linear part of the germs in the support of the measure. A particularly interesting case occurs when all Lyapunov indices vanish, in which case stability implies simultaneous linearizability of all germs in $supp(\nu)$.
\end{abstract}

\section{Introduction}
\label{section:intr}
An elementary but fundamental result in the theory of local complex dynamical systems is the following:

\medskip

\emph{Let $f: (\mathbb C^m, 0) \rightarrow (\mathbb C^m, 0)$ be a neutral germ, i.e. all eigenvalues of $df(0)$ have norm $1$. Then $\{f^n\}$ is normal in a neighborhood of the origin if and only if $f$ is locally linearizable and $df(0)$ is diagonalizable.}

\medskip

Our goal in this paper is to generalize this statement to the random setting, i.e. when studying compositions of maps that are chosen i.i.d.. Our result is the following:

\begin{theorem}
\label{neutralmeasure}
Suppose that $\nu$ is a neutral probability measure with compact support on $\mathcal O(\mathbb C^m,0)$. Then the origin lies in the random Fatou set if and only if all the germs in $supp(\nu)$ are simultaneously linearizable, and the semigroup of differentials
\[
dS_\nu:=\{df^n_\omega(0)\,|\,\omega\in supp(\nu)^{\mathbb N},\, n\in \mathbb N\},
\]
is conjugate to a sub-semigroup of $U(m)$.
\end{theorem}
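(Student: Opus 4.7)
The sufficiency direction is essentially immediate: if a single germ of biholomorphism $\phi$ simultaneously linearizes every $f\in supp(\nu)$ and the resulting matrices generate a sub-semigroup of $U(m)$, then every composition $\phi\circ f^n_\omega\circ \phi^{-1}$ is unitary on a small ball around the origin, and hence the family $\{f^n_\omega\}_{n,\omega}$ is normal in a neighborhood of the origin.

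For the necessary direction I would split the argument into three stages. First, normality of the random iterates on some polydisc $\Delta$ centered at the origin, together with the fact that each $f^n_\omega$ fixes $0$, yields via Cauchy's estimates a uniform bound on all the iterates and on all their derivatives at $0$. In particular $dS_\nu$ is a bounded subset of $GL(m,\mathbb{C})$, and the full semigroup $S_\nu$ of compositions is relatively compact in $\mathcal O(\Delta,\mathbb{C}^m)$. The neutrality assumption ensures that limits of differentials remain invertible.

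Second, I would promote the closure $G=\overline{S_\nu\cup\{id\}}$ to a compact topological group of germs at $0$. The standard fact that a compact semigroup of invertible matrices is automatically a group, applied to $\overline{dS_\nu}$, gives a subsequence $(df_i(0))^{n_k}\to I$ for each generator $f_i$; normality then lets one extract $f_i^{n_k}\to id$ as germs, whence $f_i^{n_k-1}\to f_i^{-1}\in G$. This makes $G$ a compact group acting holomorphically on a neighborhood of the origin. At this point I would invoke Bochner's linearization theorem: a compact group of holomorphic germs fixing a common point is linearizable by a single local biholomorphism $\phi$. Since $supp(\nu)\subseteq G$, the map $\phi$ simultaneously linearizes every $f\in supp(\nu)$. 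In the new coordinates, $dS_\nu$ is conjugated (by $d\phi(0)$) to a compact subgroup of $GL(m,\mathbb{C})$, which by Weyl's averaging trick is itself conjugate to a subgroup of $U(m)$; composing the two conjugations gives the form claimed in the statement.

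The main obstacle I expect lies in stage two: one must carefully verify that the closure $G$ is a bona fide compact topological group of germs at the origin, which requires showing that all iterates can be realized on one common neighborhood, that composition is continuous there, and that the inverses produced by the pigeonhole argument indeed lie in $G$ rather than in some larger completion. The delicate subpoint is the passage from convergence of the differentials $(df_i(0))^{n_k}\to I$ in $GL(m,\mathbb{C})$ to genuine convergence of the germs $f_i^{n_k}\to id$, which relies on the deterministic neutral-germ theorem recalled in the introduction. Once $G$ is established as a compact group, Bochner linearization and the unitary trick are classical and handle the rest.
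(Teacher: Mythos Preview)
Your overall architecture matches the paper's: establish relative compactness of the full semigroup $S_\nu$, promote its closure to a compact group of germs, then apply Bochner linearization and the unitary trick. Stages two and three are essentially what the paper does (Lemma~\ref{bochyp} and Proposition~\ref{vanishingLyap}), and you correctly flag the passage from convergence of differentials to convergence of germs as the delicate subpoint there.

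The genuine gap is in Stage one. The hypothesis is only that for \emph{almost every} $\omega$ the family $\{f^n_\omega\}$ is normal on some neighborhood of $0$, and that neighborhood may depend on $\omega$. Cauchy estimates applied to a single such $\omega$ bound only the iterates $f^n_\omega$ for that particular sequence; they say nothing about compositions $f^n_\alpha$ for $\alpha$ outside the full-measure set, nor do they produce a common polydisc $\Delta$ on which all compositions are defined and bounded. The passage from ``almost sure normality'' to ``$S_\nu$ is relatively compact on a fixed ball'' is precisely the content of Lemma~\ref{trappinglemma}, and it requires substantive work that your outline does not supply. One chooses a single $\omega_0$ that is both normal and lies in the full-measure set $\Omega_a$ of sequences whose shifts approximate every finite word; one then uses neutrality, in the form $|\det df^n_{\omega_0}(0)|\equiv 1$ (itself obtained in Proposition~\ref{vanishingLyap} by a variance argument on $\log|\det|$, not by the Lyapunov spectrum alone), together with Hurwitz's theorem to show that the images $f^n_{\omega_0}(\mathbb B_{\delta_0})$ all \emph{contain} a fixed ball $\mathbb B_{\rho_0}$. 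This inner bound, not an outer Cauchy-type bound, is the key: it gives $f^n_{T^k\omega_0}(\mathbb B_{\rho_0})\subset f^{n+k}_{\omega_0}(\mathbb B_{\delta_0})\subset\mathbb B_\varepsilon$ for every $k$, and the density property of $\omega_0$ then extends the estimate to all of $S_\nu$. Without this argument you cannot conclude that $S_\nu$ is relatively compact, and Stage two has no starting point.
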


As an illustration, consider the case where all maps in the support of $\nu$ are of the form $z \mapsto \lambda z + z^2$, with $|\lambda| = 1$. The fact that all the maps must be simultaneously linearizable implies that $\nu$ is supported at a single point: a germ with a Siegel disk.

\medskip

Let us be more precise about our setting. Instead of considering normality of a family of iterates $\{f^n\}$ we will consider the family $\mathcal F_\omega=\{f^n_\omega\}$. Here $\omega=(f_n)$ is a sequence of germs each chosen independently with a probability $\nu$ and $f^n_\omega:=f_n\circ\dots\circ f_1$.

The investigation on the random dynamics of holomorphic maps began with the work of Fornaess and Sibony. In \cite{FS} they showed that a generic rational map, with attracting cycles,  admits a neighborhood $W$ such that for almost every sequence of functions, chosen i.i.d. with respect to an absolutely continuous probability measure supported in $W$, the Julia set of the family $\mathcal F_\omega$ has zero measure. Furthermore for every $z\in\widehat{\mathbb C}$ the point $z$ belongs almost certainly to the Fatou set of the family $\mathcal F_\omega$. We refer to the paper of Sumi \cite{S2010} for a generalization of this result.

In this paper we will consider the special case in which all functions in the support of a probability measure $\nu$ share a common fixed point. In particular we will be interested in the comparison of the \textit{random Fatou set}, i.e. the points $z$ that lie in the Fatou set of $\mathcal F_\omega$ for almost every $\omega$, and the Fatou set of the semigroup generated by $supp(\nu)$. Our main motivation is to find analogies between this type of random systems and the study of local fixed point theory.

\medskip

The local dynamics of a holomorphic germ $f$ fixing the origin is one of the earliest problems studied in complex dynamics. A rough first description of the local behavior of $f$ is depends on the eigenvalues of $df(0)$. Depending on their absolute value we distinguish between \textit{hyperbolic} germs, where all eigenvalues have norm unequal to $1$, \textit{neutral} germs, where all eigenvalues have norm $1$, and \textit{semi-neutral} germs.

Determining the linearizability of neutral and semi-neutral germs is a subtle problem. In one dimension a complete description of this phenomenon was given in the works of Cremer \cite{Cremer}, Siegel \cite{Sieg}, Brjuno \cite{Brj} and Yoccoz \cite{Y}.

The local dynamics of germs in several complex variables has many analogies with the one dimensional case, but yet many differences. We refer to the survey of Abate \cite{Asurv} for more details. In this setting the presence of resonances constitute an obstacle to linearizability, as studied by Poincar\'e \cite{Po} for attracting germs. The description of attracting germs was completed in dimension $2$ by Latt\'es \cite{Lat} and in arbitrary dimensions independently by Sternberg \cite{Stern1,Stern2} and Rosay--Rudin \cite{RoRu}.

Neutral and semi-neutral germs in several complex variables have also been intensively studied, particularly in the parabolic case, where all eigenvalues of norm $1$ roots of unity. See for instance the works of \'Ecalle \cite{E3}, Hakim \cite{Ha2}, and Abate \cite{A2} for germs tangent to the identity, and Ueda \cite{U1,U2} and Rivi \cite{Riv} for a description of semi-parabolic germs. For the non-parabolic case consider for example the papers of Bracci--Molino \cite{BraMo} and Bracci--Zaitsev \cite{BrazorfAjeje} for neutral germs, and recent papers of Firsova--Lyubich--Radu--Tanase \cite{LRT, FLRT} for semi-neutral germs.

\medskip

In the random setting we may introduce Lyapunov indices $\kappa_1>\dots> \kappa_s$, which play the same role as the (logarithms of absolute values of the) eigenvalues of $df(0)$, see \cite{Os} or \cite{GoMa} for more information on Lyapunov indices. In analogy with the deterministic case, we say that the measure $\nu$ is \textit{attracting} if $\kappa_1<0$, it is \textit{repelling} if $\kappa_1>0$, is \textit{neutral} if the only Lyapunov index is $\kappa_1=0$, and is \textit{semi-neutral} if $\kappa_1=0$ and $\kappa_2<0$.

In the neutral case, the measure $\nu$ plays a marginal role: the origin lies in the random Fatou set if and only if it lies in the Fatou set of the semigroup generated by $supp(\nu)$. Let us present the proof of this fact in the simpler one-dimensional case when $\nu$ is assumed to be supported on a finite set. The general statement follows from Lemma \ref{trappingcor}, whose proof follows same ideas.

\medskip

\noindent {\bf Proof.}
Suppose that the origin lies in the random Fatou set. Then $|f'(0)|=1$ for every $f\in supp(\nu)$, otherwise the sequence $(f^n_\omega)'(0)$ is almost surely unbounded.

Choose a sequence $\omega_0\in supp(\nu)^{\mathbb N}$, such that $\mathcal F_{\omega_0}$ is normal and  given any finite word $(g_1,g_2,\dots,g_n)$, with $g_i\in supp(\nu)$, there exists $N>0$ for which
\[
f_{i+N}=g_i,\qquad 1\le i\le n.
\]
If $T$ denotes the left shift operator, this is equivalent to saying that for every $g\in S_\nu$ there exists $N,n>0$ for which $f^n_{T^N\omega_0}=g$.

Given $\varepsilon>0$ small enough, we may assume that every $f\in supp(\nu)$ is holomorphic and univalent on $\mathbb B_\varepsilon$. Furthermore by Ascoli--Arzel\'a Theorem, there exists $0<\delta_0<\varepsilon$ such that $f^n_{\omega_0}(\mathbb B_{\delta_0})\subset \mathbb B_\varepsilon$, which implies that every $f^n_{\omega_0}$ is univalent on $\mathbb B_{\delta_0}$. By Koebe quarter Theorem, given $g\in S_\nu$ we have
\[
g(\mathbb B_{\delta_0/4})\subset f^n_{T^N\omega_0}\circ f^N_{\omega_0}(\mathbb B_{\delta_0})\subset \mathbb B_\varepsilon.
\]
Weak Montel Theorem implies normality of the semigroup $S_\nu$. Proving that normality of $S_\nu$ implies that the origin lies in the random Fatou set is trivial. \hfill $\square$

\medskip

We note that the semi-group need not be normal in the attracting (or similarly, semi-neutral) case, as we will see in section \ref{section:nonzero}.

The behavior of semi-neutral measures turns out to be substantially more complicated then the other cases, and we are unable to give a satisfying description of when the origin belongs to the random Fatou set. In section \ref{section:generalprob} we do show that it is not possible decide whether the origin lies in the random Fatou set just by looking at the Lyapunov indices or at linearizability. In the two-dimensional setting, the fact that the origin is in the random Fatou set implies the existence of stable manifolds, analogous to their deterministic setting.
\begin{theorem}
Let $\nu$ be a semi-neutral measure on $\mathbb O(\mathbb C^2,0)$ with compact support. If the origin lies in the random Fatou set, then almost surely every limit germ $g=\lim_{k\to\infty}f^{n_k}_\omega$ has rank one and given $z$ sufficiently close to the origin, its stable set $\mathbb W^s_\omega(z)$ is locally a one-dimensional complex manifold.
\end{theorem}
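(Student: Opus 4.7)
The plan is to combine Oseledets' multiplicative ergodic theorem, applied to the differential cocycle $df^n_\omega(0)=df_n(0)\cdots df_1(0)$ at the fixed point, with the normality of $\mathcal F_\omega$ on a neighborhood of $0$, and then to invoke a stable-manifold theorem for random dynamical systems to produce the complex curves $\mathbb W^s_\omega(z)$.

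First, semi-neutrality together with Oseledets' theorem provides, for $\nu^{\mathbb N}$-almost every $\omega$, a measurable splitting $\mathbb C^2=E^u_\omega\oplus E^s_\omega$ with asymptotic rate $\kappa_1=0$ along $E^u_\omega$ and $\kappa_2<0$ along $E^s_\omega$. Multiplicativity of the determinant gives $|\det df^n_\omega(0)|\to 0$ at exponential rate $\kappa_2$. Normality of $\mathcal F_\omega$ together with $f^n_\omega(0)=0$ yields, via Cauchy estimates, a uniform bound $\|df^n_\omega(0)\|\le C$. Combining these, every subsequential limit $g=\lim f^{n_k}_\omega$ satisfies $\det(dg(0))=0$, so $\mathrm{rank}\,dg(0)\le 1$.

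Second, I would establish the matching lower bound $\mathrm{rank}\,dg(0)\ge 1$ for every subsequential limit, almost surely. The natural strategy is to adapt the Koebe / univalence argument from the purely neutral proof sketched in the introduction. Normality gives $\delta>0$ such that each $f^n_\omega$ is univalent on $\mathbb B_\delta$; restricting to a complex line through the origin directed along $E^u_\omega$, the one-dimensional Koebe $1/4$ theorem bounds the top singular value of $df^n_\omega(0)$ below by a constant multiple of the diameter of the image of this line. That diameter cannot shrink to zero without contradicting the Oseledets rate $\kappa_1=0$ on $E^u_\omega$, so rank-zero subsequential limits are excluded and $\mathrm{rank}\,dg(0)=1$ almost surely.

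Third, with the Oseledets splitting and the rank-one property in hand, I would invoke the stable-manifold theorem for random dynamical systems (in the spirit of Ruelle and of Liu--Qian) to produce, almost surely, a one-dimensional complex submanifold $\mathbb W^s_\omega(0)$ through the origin, tangent to $E^s_\omega$, along which trajectories decay exponentially at rate close to $\kappa_2$. For $z$ sufficiently close to $0$, normality of $\mathcal F_\omega$ keeps the trajectory $(f^n_\omega(z))$ inside a small neighborhood of the origin on which the cocycle $df^n_\omega(\cdot)$ is still $\kappa_2$-contracting in a direction close to $E^s_{\sigma^n\omega}$, so the usual graph-transform / Hadamard--Perron construction furnishes the desired local complex curve $\mathbb W^s_\omega(z)$.

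The main obstacle is step two. Pure Oseledets only yields $\frac{1}{n}\log\|df^n_\omega(0)v\|\to 0$ for $v\in E^u_\omega$, which by itself permits sub-exponential decay of $\|df^n_\omega(0)\|$ and therefore does not rule out rank-zero subsequential limits. Upgrading this to a uniform positive lower bound, by exploiting that normality holds on a full neighborhood of $0$ and not merely at the origin, is where the real technical work sits: the Koebe-univalence mechanism used for the one-dimensional neutral case is the natural tool, but making it compatible in $\mathbb C^2$ with the genuine contraction along $E^s_\omega$ requires care.
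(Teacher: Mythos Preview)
Your step one (rank at most one) is essentially the paper's Proposition~\ref{degenerate}.

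Step two has a genuine gap, which you yourself flag: Oseledets only forbids \emph{exponential} decay along $E^u_\omega$, so it does not by itself rule out $\|df^{n_k}_\omega(0)\|\to 0$ subexponentially. Your Koebe fix does not close this. Koebe $1/4$ bounds the \emph{image} from below in terms of $|f'(0)|$, not the reverse; to extract a lower bound on the derivative you would need to know in advance that the image of the $E^u_\omega$-line contains a fixed ball, which is precisely what is at issue. The paper's route (Proposition~\ref{derivativedoesnotvanish}) is entirely different and purely ergodic-theoretic: it passes to the projective cocycle $\mathbb PF$ on $\Omega\times\mathbb P^{m-1}$ with observable $\Phi(\omega,[v])=\log(\|M_\omega v\|/\|v\|)$, takes an ergodic $\mathbb PF$-invariant measure $m_1$ projecting to $\mu$ with $\int\Phi\,dm_1=\kappa_1=0$ (Ledrappier), and proves a recurrence lemma stating that Birkhoff sums of a zero-mean observable return arbitrarily close to $0$ infinitely often almost surely. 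If some $\|M^N_{\omega_0}\|<1$, one manufactures a positive-measure set on which the Birkhoff sums of $\Phi$ stay below $-\log 2$ for all large times, contradicting recurrence. This yields $\|df^n_\omega(0)\|\ge 1$ for every $n$ and every $\omega$, so every limit satisfies $\|dg(0)\|\ge 1$.

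Your step three also diverges from the paper and is heavier than necessary. A random Hadamard--Perron theorem gives the stable manifold through the fixed point $0$; producing stable manifolds through nearby $z$ needs a separate foliation argument. The paper is softer: via an equicontinuity-plus-ergodicity construction it selects one \emph{specific} limit $g=\lim f^{n_k}_\omega$ (along return times to a set of uniform equicontinuity of positive measure) with the property that $\mathbb W^s_\omega(z)=\{w\in U_\omega:g(w)=g(z)\}$ for all $z$ with $\|g(z)\|$ small. Since $g$ has constant rank one near the origin by the first two steps, the constant rank and implicit function theorems make each level set a one-dimensional complex manifold, and the foliation by stable sets comes for free.
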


The paper is organized as follows. In section \ref{section:randommatrices} we review background on random matrices, and prove that a neutral measure of linear maps is stable if and only if there is a conjugation to a sub-semigroup of $U(m)$. In section \ref{section:generalprob} we give a more precise formulation of the problem, and treat several examples showing that our assumptions are necessary. We will study normality of the family $\mathcal F_\omega$ in section \ref{section:nonzero} when $\kappa_1\neq 0$, and in section \ref{section:neutral} for neutral measures. Semi-neutral measures will be considered in section \ref{section:semineutral}

Throughout the paper we use the inductive limit topology on $\mathcal O(\mathbb C^m,0)$. Its construction and properties are discussed in the appendix. %\ref{section:top}

\medskip {\bf Acknowledgement.} We thank L\'aszl\'o Lempert and Jasmin Raissy for several useful discussions.

\section{Products of random matrices}
\label{section:randommatrices}
Let $(\Omega,\mathscr B,\mu)$ be a probability space and $T:\Omega\rightarrow\Omega$ be an ergodic transformation. Given a measurable function $M:\Omega\rightarrow Mat(m,\mathbb C)$, the \textit{(linear) cocycle} defined by $M$ over $T$ is the skew-product transformation
\[
F:\Omega\times \mathbb C^m\rightarrow \Omega\times \mathbb C^m,\qquad (\omega,v)\mapsto (T\omega, M_\omega v).
\]
Observe that $F^n(\omega,v)=(T^n\omega, M^n_\omega v)$, where
\[
M^n_\omega:=M_{T^{n-1}\omega}\cdots M_\omega.
\]

By elementary properties of measurable functions, the function $\omega\mapsto\sup_n\Vert M^n_\omega\Vert$ is also measurable. Therefore the set
\[
\Omega_b=\{\omega\in \Omega\,|\, M^n_\omega \textrm{ is bounded}\},
\]
is measurable and it satisfies $T^{-1}\Omega_b=\Omega_b$. By ergodicity of $T$ we either have $\mu(\Omega_b)=0$ or $\mu(\Omega_b)=1$.

\medskip

\noindent{\bf Problem}\emph{ For which cocycles is the family $\mathcal M_\omega=\{M^n_\omega\}$ almost surely bounded?}

\medskip

A cocycle of particular importance for our work is the following
\begin{definition}[Definition (i.i.d. cocycle)]
Let $\nu$ be a probability measure $Mat(m,\mathbb C)$. Let $\Omega$ be the space of all sequences in $supp(\nu)$, $T$ be the left shift operator and $M:\Omega\rightarrow Mat(m,\mathbb C)$ be the function which returns the first element of the sequence $\omega$.

Let $U_1,\dots,U_n$ be open subsets of $supp(\nu)$, we define the set
\[
\mathcal C_{U_1,\dots U_n}:=\left\{\omega\in\Omega\,|\, M_\omega\in U_1,\dots, M_{T^{n-1}\omega}\in U_n \right\}.
\]
This collection of sets forms a basis for the product topology of $\Omega$. With $\mathscr B$ we will denote the corresponding Borel $\sigma$-algebra and with $\mu$ the measure for which $\mu(C_{U_1,\dots, U_n})=\nu(U_1)\cdots\nu(U_n)$. It is a well known fact that $T$ is ergodic on $(\Omega,\mathscr B,\mu)$.

The function $M$ is continuous and therefore measurable. We notice that $M_{T^{n-1}\omega}$ coincides with the $n$-th element of the initial sequence and that $X_i(\omega):=M_{T^{i-1}\omega}$ is an i.i.d. sequence of random variables with values in $Mat(m,\mathbb C)$, each chosen with probability $\nu$. In this case, $M^n_\omega$ is given by the product of the first $n$ elements of the sequence $\omega$.
\end{definition}

The problem of the iteration of random matrices was first studied by Furstenberg and Kesten in \cite{F}. An important generalization of their result is the multiplicative ergodic Theorem of Oseledec \cite{Os}. The following version of the theorem can be found in \cite{Rue}.
\begin{theorem}[Multiplicative ergodic Theorem]
\label{odeselecthm}
Let $T:\Omega\rightarrow \Omega$ be an ergodic transformation and $M:\Omega\rightarrow Mat(m,\mathbb C)$ be a measurable function such that
\[
\log^+\Vert M_\omega\Vert\in L^1(\Omega,\mu).
\]
There exist numbers $+\infty>\kappa_1>\dots>\kappa_s\ge -\infty$ called \textbf{Lyapunov indices} and natural numbers $\alpha_1,\dots,\alpha_s$ called \textbf{Lyapunov multiplicities} satisfying the equation $\alpha_1+\dots+\alpha_s=m$, such that, for almost every $\omega\in \Omega$, we have
\begin{enumerate}
\item[a.] There exists a Hermitian matrix $\Lambda_\omega$ with eigenvalues $e^{\kappa_1}>\dots>e^{\kappa_s}$, with respective multiplicities $\alpha_i$,  such that
\begin{equation}
\label{matrixconv}
\lim_{n\to\infty}\left((M^n_\omega)^*M^n_\omega\right)^{\frac{1}{2n}}=\Lambda_\omega.
\end{equation}
\item[b.] Suppose that $\mathcal U_1(\omega),\dots,\mathcal U_s(\omega)$ are the eigenspaces of $\Lambda_\omega$. Let $\mathcal V_{s+1}(\omega)=\{0\}$ and $\mathcal V_i(\omega)=\mathcal U_i(\omega)\oplus\cdots\oplus U_s(\omega)$. Then, given $v\in \mathcal V_i(\omega)\setminus \mathcal V_{i+1}(\omega)$, we have
\begin{equation}
\label{mergthm}
\lim_{n\to\infty}n^{-1}\log\Vert M^n_\omega v\Vert=\kappa_i.
\end{equation}
\end{enumerate}
The set $\sigma=\{(\kappa_1,\alpha_1),\dots,(\kappa_s,\alpha_s)\}$ is called the \textbf{Lyapunov spectrum}.
\end{theorem}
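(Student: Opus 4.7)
The plan is to proceed in two stages: first establish the existence of the Lyapunov exponents $\kappa_1 > \cdots > \kappa_s$ via Kingman's subadditive ergodic theorem, and then promote these pointwise growth rates to the matrix-level convergence in (\ref{matrixconv}) by analyzing the polar decomposition of $M^n_\omega$.

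For the first stage, note that the cocycle relation $M^{m+n}_\omega = M^n_{T^m\omega}\, M^m_\omega$ together with submultiplicativity of the operator norm yields subadditivity of $\log\|M^n_\omega\|$ along the shift, and the hypothesis $\log^+\|M_\omega\|\in L^1$ supplies the integrability needed for Kingman. This produces the top exponent $\kappa_1 = \lim \tfrac{1}{n}\log\|M^n_\omega\|$ almost surely. To recover the remaining exponents I would apply Kingman to the exterior power cocycles $\wedge^k M^n_\omega$ on $\wedge^k\mathbb{C}^m$ for $1 \le k \le m$: the identity $\|\wedge^k A\| = \sigma_1(A)\cdots\sigma_k(A)$ implies that the top Lyapunov exponent of $\wedge^k M^n_\omega$ equals the sum of the $k$ largest exponents of the original cocycle (with multiplicities). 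Successive differences then isolate the individual $\kappa_i$, and the common values among them are what produce the multiplicities $\alpha_i$ and the collapsed spectrum $\kappa_1>\cdots>\kappa_s$.

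The second stage, convergence of $A_n^{1/(2n)}$ for $A_n = (M^n_\omega)^* M^n_\omega$, is the main technical difficulty. The eigenvalues of $A_n^{1/(2n)}$ are controlled by the first stage: they are the singular values $\sigma_i(M^n_\omega)^{1/n}$, which converge to $e^{\kappa_i}$ almost surely. What is considerably harder is showing that the corresponding eigenspaces of $A_n$ stabilize as $n\to\infty$. The standard argument, going back to Raghunathan, is to show that the angles between the singular subspaces associated with distinct exponents contract exponentially: once a vector has a non-trivial projection onto the $\kappa_i$-direction, forward iteration amplifies it at rate $e^{n\kappa_i}$, while perturbations in strictly slower directions are dominated and die out. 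This passes from the scalar convergence $\tfrac{1}{n}\log\|M^n_\omega v\| \to \kappa_i$ to operator-norm convergence of $A_n^{1/(2n)}$ to a Hermitian limit $\Lambda_\omega$.

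Part (b) then follows by spectral theory: setting $\mathcal U_i(\omega)$ equal to the $e^{\kappa_i}$-eigenspace of $\Lambda_\omega$ and $\mathcal V_i(\omega) = \mathcal U_i(\omega)\oplus\cdots\oplus\mathcal U_s(\omega)$, formula (\ref{mergthm}) is obtained by testing the convergence (\ref{matrixconv}) against vectors in $\mathcal V_i\setminus \mathcal V_{i+1}$, noting that $\|M^n_\omega v\|^2 = \langle A_n v, v\rangle$. Measurability of $\omega\mapsto\Lambda_\omega$ comes for free from the almost-sure convergence of a sequence of measurable maps. The hard part I anticipate is precisely the contraction-of-angles step: quantifying the decay of components in slow directions requires sharper estimates than the subadditive bookkeeping that delivers the exponents themselves, and it is here that genuine use of the Borel--Cantelli type consequences of $\log^+\|M_\omega\|\in L^1$ enters, in order to control the error terms uniformly along almost every trajectory.
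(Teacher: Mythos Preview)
The paper does not prove this theorem: it is stated as background and attributed to the literature (Oseledec, with the quoted formulation taken from Ruelle). There is therefore no proof in the paper to compare against. Your outline is a reasonable sketch of the standard approach---Kingman applied to exterior powers to extract the exponents, followed by a Raghunathan-style argument to obtain convergence of the singular subspaces---but be aware that in this paper the Multiplicative Ergodic Theorem is simply quoted, not reproved, so no proof is expected of you here.
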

\begin{definition}
Suppose that a given cocycle satisfies $\log^+\Vert M_\omega\Vert\in L^1(\Omega,\mu)$ and let $\kappa_1>\dots>\kappa_s$ be the Lyapunov indices. We say that the linear cocycle is
\begin{enumerate}
\item[]\textbf{Attracting} if the maximal Lyapunov index $\kappa_1<0$,
\item[]\textbf{Repelling} if the maximal Lyapunov index $\kappa_1>0$,
\item[]\textbf{Neutral} if the Lyapunov spectrum $\sigma_\mu=\{(0,m)\}$,
\item[]\textbf{Semi-neutral} if the maximal Lyapunov index $\kappa_1=0$ and $\alpha_1\neq m$.
\end{enumerate}
\end{definition}
\begin{remark}
In the work of Furstenberg and Kesten \cite{F} it is proved that, for every cocycle, we may define a number $\kappa_\mu$, called \textit{Lyapunov exponent}, as
\begin{equation}
\label{lyap}
\kappa_\mu=\lim_{n\to\infty}n^{-1} \E\log \Vert M^n_\omega\Vert.
\end{equation}
Furthermore under the assumption that $\log^+\Vert M_\omega\Vert\in L^1(\Omega,\mu)$, for almost every $\omega\in \Omega$ we have
\begin{equation}
\label{Furstthm}
\lim_{n\to\infty}n^{-1}\log \Vert M^n_\omega \Vert=\kappa_\mu.
\end{equation}
\end{remark}
In the same hypothesis  of Theorem \ref{odeselecthm} we can describe the connection between Lyapunov indices and Lyapunov exponent as follows
\begin{lemma}
\label{lyapexpandind}
The Lyapunov exponent $\kappa_\mu$ coincides with the maximal Lyapunov index $\kappa_1$. Furthermore the following equality holds
\[
\alpha_1\kappa_1+\dots+\alpha_s\kappa_s=\E\log|\det(M_\omega)|.
\]
\end{lemma}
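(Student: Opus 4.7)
The plan is to derive both identities directly from the convergence (\ref{matrixconv}) in part (a) of Theorem \ref{odeselecthm}, combined with the Birkhoff ergodic theorem applied to $\log|\det M_\omega|$.

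For the first statement, I would observe that the operator norm of $\bigl((M^n_\omega)^* M^n_\omega\bigr)^{1/(2n)}$ equals $\|M^n_\omega\|^{1/n}$: indeed, the largest singular value of $M^n_\omega$ coincides with its operator norm and is the positive square root of the top eigenvalue of $(M^n_\omega)^* M^n_\omega$. Since (\ref{matrixconv}) converges (in any matrix norm, in particular in operator norm) to $\Lambda_\omega$, whose top eigenvalue is $e^{\kappa_1}$, taking logarithms gives $n^{-1}\log\|M^n_\omega\| \to \kappa_1$ almost surely. Comparing with (\ref{Furstthm}) forces $\kappa_\mu = \kappa_1$.

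For the determinant identity, I would apply $\det$ to both sides of (\ref{matrixconv}). Using $\det(A^{1/(2n)}) = (\det A)^{1/(2n)}$ for positive semidefinite $A$, together with continuity of $\det$, the left hand side becomes $|\det M^n_\omega|^{1/n}$, while the right hand side converges to $\det\Lambda_\omega = \prod_{i=1}^{s} e^{\alpha_i\kappa_i}$. Taking logarithms yields
\[
\frac{1}{n}\log|\det M^n_\omega| \longrightarrow \alpha_1\kappa_1 + \cdots + \alpha_s\kappa_s
\]
almost surely. On the other hand, multiplicativity of the determinant gives the additive cocycle identity $\log|\det M^n_\omega| = \sum_{k=0}^{n-1}\log|\det M_{T^k\omega}|$. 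Since $\log^+|\det M_\omega| \le m\log^+\|M_\omega\| \in L^1(\Omega,\mu)$, Birkhoff's ergodic theorem applies (with the limit allowed to be $-\infty$) and yields $n^{-1}\sum_{k=0}^{n-1}\log|\det M_{T^k\omega}| \to \E\log|\det M_\omega|$ almost surely. Equating the two limits produces the identity.

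The only delicate point is the possibility that some Lyapunov index equals $-\infty$, corresponding to $\det M_\omega$ vanishing on a set of positive measure. In that situation both sides of the identity equal $-\infty$ and the above argument goes through with the standard extended-real conventions; this is the step I would be most careful to spell out, but it does not require any new idea beyond what is already available in (a) and in Birkhoff's theorem.
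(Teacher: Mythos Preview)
Your proof is correct. The determinant identity is handled exactly as in the paper: apply $\det$ (or $\log|\det|$) to (\ref{matrixconv}), unwind $\log|\det M^n_\omega|$ as an ergodic sum, and invoke Birkhoff; your remark about the $-\infty$ case is a welcome clarification that the paper leaves implicit.

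For the identity $\kappa_\mu=\kappa_1$ you take a genuinely different route. The paper argues via part (b) of Theorem \ref{odeselecthm}: it chooses unit vectors $v_n$ realizing $\Vert M^n_\omega\Vert$, extracts a convergent subsequence $v_{n_k}\to v$, and uses (\ref{mergthm}) together with a triangle-inequality estimate to deduce $\kappa_i\ge\kappa_\mu$ for some $i$, hence $\kappa_1\ge\kappa_\mu$ (the reverse inequality being declared easy). You instead work directly from part (a), observing that the operator norm of $\bigl((M^n_\omega)^*M^n_\omega\bigr)^{1/(2n)}$ is exactly $\Vert M^n_\omega\Vert^{1/n}$ and passing to the limit via continuity of the operator norm. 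Your argument is shorter and avoids the compactness extraction entirely; the paper's approach has the minor advantage that it would still apply in formulations of the multiplicative ergodic theorem that give only the filtration (b) without the matrix limit (a), but since both are available here your version is the cleaner one.
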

\begin{proof}
Let $\kappa_1$ be the maximal Lyapunov index and choose $\omega\in \Omega$ such that \eqref{matrixconv}, \eqref{mergthm} and \eqref{Furstthm} hold. It is not hard to prove that $\kappa_\mu\ge \kappa_1$, thus it remains to prove that $\kappa_\mu\le \kappa_1$. Let $v_{n}$ be a unit vectors such that $\Vert M^n_\omega\Vert=\Vert M^n_\omega v_n\Vert$. We take $n_k$ such that $v_{n_k}\to v$. By \eqref{mergthm}, for some $i=1,\dots,s$, we have
\begin{align*}
\kappa_i&=\lim_{k\to\infty}n_k^{-1}\log\Vert M^{n_k}_\omega v\Vert \\
&\ge\lim_{k\to\infty}n_k^{-1}\log\left(\Vert M^{n_k}_\omega v_{n_k} \Vert -\Vert M^{n_k}_\omega (v_{n_k}-v)\Vert\right)\\
&\ge \lim_{k\to\infty}n_k^{-1}\log\Vert M^{n_k}_\omega\Vert+\lim_{k\to\infty}n_k^{-1}\log\left(1-\Vert v_{n_k}-v\Vert\right)\\
&\ge \kappa_\mu,
\end{align*}
since $\kappa_i\le \kappa_1$, the equality $\kappa_\mu=\kappa_1$ follows.

If we apply $\log|\,\cdot\,|$ to both side of \eqref{matrixconv}, we obtain that
\begin{align*}
\alpha_1\kappa_1+\dots+\alpha_s\kappa_s&=\log|\det(\Lambda_\omega)|\\
&=\lim_{n\to\infty}\frac{1}{n}\log|\det (M^n_\omega)|\\
&=\lim_{n\to\infty}\frac{1}{n}\sum_{i=0}^{n-1}\log|\det (M_{T^i\omega})|.
\end{align*}
By Birkhoff ergodic theorem the last term of this equality converge almost surely to $\E\log|\det(M_\omega)|$. By choosing an appropriate $\omega$ , we obtain the desired equality.
\end{proof}

%Calculating the Lyapunov spectrum of a given cocycle is, in general, a very difficult task. (REFERNCE FOR THIS) On the other hand one can more easily evaluate $\kappa_\mu$ and $\E\log |\det(M_\omega)|$. By the previous lemma, the type of the cocycle is determined once those two quantities are found.

\begin{corollary}
\label{equivalentdefmeas}
Suppose that $\log^+\Vert M_\omega\Vert\in L^1(\Omega,\mu)$. We have the following list of equivalences:
\begin{enumerate}
\item[a.]the cocycle is attracting if and only if $\kappa_\mu<0$,
\item[b.]the cocycle is repelling if and only if $\kappa_\mu>0$,
\item[c.]the cocycle is neutral if and only if $\kappa_\mu=0$ and $\E\log|\det(M_\omega)|=0$,
\item[d.]the cocycle is semi-neutral if and only if $\kappa_\mu=0$ and $\E\log|\det(M_\omega)|<0$.
\end{enumerate}
\end{corollary}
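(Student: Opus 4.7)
The plan is to derive all four equivalences directly from Lemma \ref{lyapexpandind}, which supplies two key identities: first, $\kappa_\mu=\kappa_1$ (the maximal Lyapunov index), and second, $\alpha_1\kappa_1+\cdots+\alpha_s\kappa_s=\E\log|\det(M_\omega)|$. The strategy is to compare these with the definitions of attracting/repelling/neutral/semi-neutral cocycles in terms of the Lyapunov spectrum.

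Parts (a) and (b) are immediate: by definition the cocycle is attracting (resp.\ repelling) iff $\kappa_1<0$ (resp.\ $\kappa_1>0$), and the first identity of Lemma \ref{lyapexpandind} replaces $\kappa_1$ by $\kappa_\mu$. No further work is required.

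For part (c), suppose the cocycle is neutral, so $\sigma=\{(0,m)\}$, $s=1$, $\kappa_1=0$ and $\alpha_1=m$. Then $\kappa_\mu=0$ and $\E\log|\det(M_\omega)|=m\cdot 0=0$ by the second identity. Conversely, assume $\kappa_\mu=0$ and $\E\log|\det(M_\omega)|=0$; then $\kappa_1=0$, and since $\kappa_1>\kappa_2>\cdots>\kappa_s$ we have $\kappa_i<0$ for $i\ge 2$. Plugging into the sum yields
\[
0=\alpha_1\cdot 0+\sum_{i=2}^s\alpha_i\kappa_i,
\]
where each summand on the right is strictly negative (recall $\alpha_i\ge 1$). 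Therefore the sum must be empty, forcing $s=1$ and consequently $\alpha_1=m$; this is exactly the definition of a neutral cocycle.

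For part (d), if the cocycle is semi-neutral then $\kappa_1=0$ (so $\kappa_\mu=0$) and $\alpha_1\ne m$, hence $s\ge 2$ and $\kappa_i<0$ for $i\ge 2$. Then $\E\log|\det(M_\omega)|=\sum_{i=2}^s\alpha_i\kappa_i<0$. Conversely, if $\kappa_\mu=0$ and $\E\log|\det(M_\omega)|<0$, then $\kappa_1=0$; if we had $\alpha_1=m$ (and thus $s=1$), the determinant identity would give $\E\log|\det(M_\omega)|=0$, a contradiction. Hence $\alpha_1\ne m$, so the cocycle is semi-neutral. There is no real obstacle here; the proof is essentially pure bookkeeping, with the only delicate point being the observation that in parts (c) and (d) the strict inequality $\kappa_i<0$ for $i\ge 2$ prevents cancellation between terms of opposite sign in the Lyapunov identity.
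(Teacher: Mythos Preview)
Your proof is correct and follows exactly the approach the paper intends: the paper states this result as a corollary of Lemma \ref{lyapexpandind} without supplying a proof, and your argument simply spells out the bookkeeping implicit in that attribution. There is nothing to add.
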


When the cocycle is attracting or repelling, \eqref{Furstthm} gives an answer to the problem of the boundedness of the family $\mathcal M_\omega=\{M^n_\omega\}$.
\begin{corollary}
If a cocycle is attracting then the family $\mathcal M_\omega$ is almost certainly bounded in $Mat(m,\mathbb C)$. If a cocycle is repelling  $\mathcal M_\omega$ is almost certainly unbounded.
\end{corollary}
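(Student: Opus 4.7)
The statement follows almost immediately from the Furstenberg--Kesten formula \eqref{Furstthm} together with Lemma \ref{lyapexpandind}, so the plan is essentially a definition chase; the interest lies in presenting it cleanly. First, invoke the integrability hypothesis (implicit in the notions of attracting/repelling, since these rely on Lyapunov indices from Theorem \ref{odeselecthm}) to guarantee that for almost every $\omega \in \Omega$,
\[
\lim_{n\to\infty} n^{-1}\log\Vert M^n_\omega\Vert \;=\; \kappa_\mu.
\]
By Lemma \ref{lyapexpandind} we may replace $\kappa_\mu$ with the maximal Lyapunov index $\kappa_1$.

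For the attracting case, fix any constant $c$ with $\kappa_1 < c < 0$. The convergence above implies that for almost every $\omega$ there exists $N = N(\omega)$ such that $\Vert M^n_\omega\Vert \le e^{nc}$ for all $n \ge N$. Since $c<0$, this upper bound tends to $0$, so in particular the sequence $(M^n_\omega)_{n\in\mathbb N}$ is bounded, proving that $\mathcal M_\omega$ is almost surely bounded (and in fact converges to $0$). For the repelling case, fix $0 < c < \kappa_1$; the same limit now yields $\Vert M^n_\omega\Vert \ge e^{nc}$ for $n$ large enough, which goes to infinity, so $\mathcal M_\omega$ is almost surely unbounded.

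There is no real obstacle here: both conclusions are direct consequences of the exponential rate statement \eqref{Furstthm}, once Lemma \ref{lyapexpandind} is used to identify $\kappa_\mu$ with $\kappa_1$. The only point that deserves a sentence of care is observing that the almost-sure set on which \eqref{Furstthm} holds is a single set of full measure (not depending on the choice of $c$), so the quantifiers line up correctly.
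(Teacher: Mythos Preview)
Your proof is correct and matches the paper's own reasoning: the paper does not even spell out a proof for this corollary, merely noting that \eqref{Furstthm} settles the boundedness question in the attracting and repelling cases. Your argument is precisely the intended one, with the minor cosmetic difference that the paper would cite Corollary \ref{equivalentdefmeas} (which packages the identification $\kappa_\mu=\kappa_1$ from Lemma \ref{lyapexpandind}) rather than invoking the lemma directly.
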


\subsection{Neutral measures}\mbox{}

\medskip\noindent
The stability problem is considerably more complicated in the neutral setting. However, for i.i.d. cocycles arising from probability measures with compact support, we can give a precise description of the stable systems in Proposition \ref{vanishingLyap} below. In the two examples that follow we show that no such statement can hold for general neutral cocycles.

Given a probability measure $\nu$ on $Mat(m,\mathbb C)$ with compact support, by Tychonoff Theorem, the set $\Omega$ is compact, thus $\log^+\Vert M_\omega\Vert\in L^1(\Omega,\mu)$.

\begin{definition}
Let $\nu$ be a measure on $Mat(m,\mathbb C)$ with compact support. We say that $\nu$ is attracting (respectively repelling, neutral and semi-neutral) if the corresponding i.i.d. cocycle is attracting (respectively repelling, neutral and semi-neutral).
\end{definition}

%By Corollary \ref{equivalentdefmeas}, we may say that a measure is attracting (repelling, ecc..) even if the support of the measure is not compact. For this reason, when necessary, we will always specify that a measure is attracting (repelling, ecc..) and with compact support.

%From now on we will restrict to the study of \textit{continuous cocycles over a compact space}. This means that we will assume that the space $\Omega$ is a topological compact space and that the maps $T$ and $M$ are continuous with respect to this topology and the standard topology of $Mat(m,\mathbb C)$\marginpar{should we also ask for compatibility between topology and $\sigma$-algebra?}.
%
%Since $supp(\mu)$ is a closed, and hence compact, invariant subset of $\Omega$, by replacing $\Omega$ with $supp(\mu)$ if necessary, we may always assume that $supp(\mu)=\Omega$. Notice that in this case we always have that $\log^+\Vert M_\omega\Vert\in L^1(\Omega,\mu)$.

\begin{lemma}
\label{fullmeasure}
Let $\nu$ be a probability measure on $Mat(m,\mathbb C)$ with compact support. Then the set
\[
\Omega_{a}:=\left\{\omega\in \Omega\,\Big|\,\forall n\in\mathbb N \textrm{ and }\forall\alpha\in \Omega,\,\exists k_j\,:\, M^n_{T^{k_j}\omega}\to M^n_\alpha\right\}
\]
is a full measure subset of $\Omega$.
\end{lemma}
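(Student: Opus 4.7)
The plan is to reduce the uncountable condition (\enquote{for every $\alpha\in\Omega$}) to a countable one using second-countability of $\mathrm{supp}(\nu)$, and then invoke the ergodicity of $T$ (or equivalently Birkhoff's theorem, or the Borel--Cantelli lemma together with Kolmogorov's $0$--$1$ law).

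First I would fix a countable basis $\{V_k\}_{k\in\mathbb N}$ for the topology of $\mathrm{supp}(\nu)$, which exists because $\mathrm{supp}(\nu)$ is a compact subset of the separable space $Mat(m,\mathbb C)$. Since each $V_k$ is a nonempty open subset of $\mathrm{supp}(\nu)$, we automatically have $\nu(V_k)>0$. Then, for every $n\in\mathbb N$ and every $n$-tuple $(V_{k_1},\ldots,V_{k_n})$ of basis elements, the cylinder $\mathcal C_{V_{k_1},\ldots,V_{k_n}}$ has measure $\nu(V_{k_1})\cdots\nu(V_{k_n})>0$. By ergodicity of $T$ (or by the Borel--Cantelli lemma applied to the independent events $\{T^k\omega\in\mathcal C_{V_{k_1},\ldots,V_{k_n}}\}$), the set
\[
\Omega(n;k_1,\ldots,k_n):=\bigl\{\omega\in\Omega\,\big|\,T^k\omega\in\mathcal C_{V_{k_1},\ldots,V_{k_n}}\text{ for infinitely many }k\bigr\}
\]
has full measure. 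Intersecting over the countable family of all such $(n;k_1,\ldots,k_n)$ yields a full measure set $\Omega'$.

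Next I would verify $\Omega'\subseteq \Omega_a$. Fix $\omega\in\Omega'$, $n\in\mathbb N$, $\alpha\in\Omega$ and $\varepsilon>0$. Write $\alpha_i:=M_{T^{i-1}\alpha}\in\mathrm{supp}(\nu)$ for $i=1,\ldots,n$. Choose basis elements $V_{k_i}\ni\alpha_i$ of diameter less than $\varepsilon$. By definition of $\Omega'$ there are infinitely many indices $k$ with $T^k\omega\in\mathcal C_{V_{k_1},\ldots,V_{k_n}}$, so the first $n$ matrices of $T^k\omega$ lie within $\varepsilon$ of $\alpha_1,\ldots,\alpha_n$. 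Letting $\varepsilon\to 0$ and extracting a diagonal subsequence, we obtain $k_j\to\infty$ such that $M_{T^{k_j+i-1}\omega}\to\alpha_i$ for $i=1,\ldots,n$; by continuity of matrix multiplication this gives $M^n_{T^{k_j}\omega}\to M^n_\alpha$, which is exactly the condition defining $\Omega_a$.

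The only delicate point is the exchange of quantifiers: the hypothesis in the definition of $\Omega_a$ is uncountable both in $n$ and in $\alpha$, but the construction above handles both by first fixing a countable base and then using a diagonal approximation for each $\alpha$ separately. No uniformity in $\alpha$ is needed, since for each individual $\omega\in\Omega'$ the subsequence $k_j$ is allowed to depend on $n$ and $\alpha$. I do not expect any serious obstacle; the argument is essentially the standard fact that almost every orbit of an ergodic measure-preserving system visits every positive-measure set infinitely often, combined with the separability of $\mathrm{supp}(\nu)$.
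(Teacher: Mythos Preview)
Your argument is correct and rests on the same core idea as the paper's proof: reduce the uncountable quantifier over $\alpha\in\Omega$ to a countable family of positive-measure sets, and then use ergodicity of $T$ to conclude that almost every orbit visits each of them infinitely often. The implementations differ in two minor respects. The paper works directly with the continuous map $\omega\mapsto M^n_\omega$: for each fixed $n$ and tolerance $\varepsilon_j$ it uses \emph{compactness} of $\Omega$ to extract a \emph{finite} cover $U_1,\ldots,U_N$ on which $M^n$ oscillates by at most $\varepsilon_j/2$, applies ergodicity to these finitely many sets, and handles a general $\alpha$ by a triangle-inequality step. You instead fix once and for all a countable basis of $\mathrm{supp}(\nu)$, work coordinate-wise with cylinder sets over this basis, and recover convergence of the product $M^n$ only at the end via continuity of matrix multiplication. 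Your version uses only second-countability rather than compactness and is slightly more economical (no $\varepsilon/2$ argument); the paper's version makes more transparent the generalization noted in their subsequent remark, to arbitrary continuous cocycles over a compact base.
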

\begin{proof}
Let $n\in\mathbb N$ and choose a sequence $\varepsilon_j\to 0$. We define
\[
\Omega_{n,j}:=\{\omega\in \Omega\,|\,\forall\alpha\in \Omega,\,\exists k\,:\,\Vert M^n_{T^k\omega}-M^n_\alpha\Vert<\varepsilon_j\}.
\]
Is it clear that $\bigcap_{n,j}\Omega_{n,j}=\Omega_a$. Since the sets $\Omega_{n,j}$ are countably many it suffices to prove that, for every $n,j\in\mathbb N$, the set $\Omega_{n,j}$ has full measure.

Suppose that $n$ and $j$ are fixed. Given $\alpha\in \Omega$, using the continuity of $T$ and $M$, we may find an open neighborhood $U_\alpha\ni \alpha$ so that, given $\beta\in U_\alpha$, we have
\[
\Vert M^n_\alpha-M^n_\beta\Vert<\varepsilon_j/2.
\]

By compactness of $supp(\nu)$, the set $\Omega$ is also compact, therefore we may find $\alpha_1,\dots \alpha_N$ such that $\Omega=\cup_i U_{\alpha_i}$. For simplicity we will write $U_i$ for $U_{\alpha_i}$. Notice that $supp(\mu)=\Omega$, therefore all the sets $U_{i}$ have positive measure. Using standard results from ergodic theory and the fact that the $\alpha_i$ are a finite number, we find that
\[
\Omega_{n,j}'=\{\omega\in \Omega\,|\,\forall i=1,\dots N,\,\exists k_i \,:\, T^{k_i}\omega\in U_{i}\}
\]
has full measure in $\Omega$.

Finally, given $\omega\in\Omega_{n,j}'$ and $\alpha\in \Omega$, there exists $U_i$ so that $\alpha\in U_i$ and $k_i$ so that $T^{k_i}\omega\in U_i$. By the definition of the $U_i$-s we obtain that
\[
\Vert M^n_{T^{k_i}\omega}-M^n_\alpha\Vert\le \Vert M^n_{T^{k_i}\omega}-M^n_{\alpha_i}\Vert+\Vert M^n_{\alpha_i}-M^n_\alpha\Vert<\varepsilon_j,
\]
which proves that $\Omega'_{n,j}\subset \Omega_{n,j}$ and thus that $\Omega_{n,j}$ has full measure.
\end{proof}
\begin{remark}
The previous lemma is valid in a much more general context. As a matter of fact it holds, with the same exact proof, for continuous cocycles, i.e. where both $T$ and $M$ are continuous functions, over a compact space.
\end{remark}
\begin{lemma}
\label{compactgroup}
Let $\mathcal G$ be a compact subgroup of $GL(m,\mathbb C)$. Then $\mathcal G$ is conjugated to a subgroup of the standard unitary group $U(m)$.
\end{lemma}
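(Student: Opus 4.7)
The plan is the classical Weyl-style averaging argument. The key observation is that any two Hermitian inner products on $\mathbb{C}^m$ are related by a linear change of basis, so to find a conjugation of $\mathcal{G}$ into $U(m)$ it suffices to construct a Hermitian inner product on $\mathbb{C}^m$ that is invariant under the action of $\mathcal{G}$.

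First, I would note that $\mathcal{G}$, being a compact subgroup of the Hausdorff topological group $GL(m,\mathbb{C})$, is itself a compact Hausdorff topological group, and so admits a normalized (bi-invariant) Haar measure $dg$. I would then define
\[
\langle v, w\rangle_{\mathcal{G}} := \int_{\mathcal{G}} \langle g v, g w\rangle \, dg,
\]
where $\langle \cdot, \cdot\rangle$ denotes the standard Hermitian product. Sesquilinearity and conjugate symmetry are inherited from the integrand; positive definiteness follows because for $v\neq 0$ the integrand $\langle gv, gv\rangle$ is strictly positive for every $g\in\mathcal{G}$, since $g$ is invertible. The $\mathcal{G}$-invariance $\langle h v, h w\rangle_{\mathcal{G}} = \langle v, w\rangle_{\mathcal{G}}$ for all $h\in\mathcal{G}$ then follows directly from the left-invariance of Haar measure applied to the substitution $g \mapsto gh$.

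Finally, I would pick an orthonormal basis of $\mathbb{C}^m$ with respect to $\langle \cdot,\cdot\rangle_{\mathcal{G}}$, and let $P\in GL(m,\mathbb{C})$ be the matrix sending this basis to the standard one. Then $\langle v,w\rangle_{\mathcal{G}}=\langle Pv, Pw\rangle$ for all $v,w$, and combining this with the $\mathcal{G}$-invariance of $\langle \cdot,\cdot\rangle_{\mathcal{G}}$ gives $\langle PgP^{-1}u, PgP^{-1}w\rangle = \langle u,w\rangle$ for every $g\in\mathcal{G}$. Hence $PgP^{-1}\in U(m)$ for all $g\in\mathcal{G}$, i.e.\ $P\mathcal{G} P^{-1}\subset U(m)$.

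There is no substantive obstacle here: the only point requiring care is that $\mathcal{G}$ is closed in $GL(m,\mathbb{C})$ (automatic, since compact subsets of Hausdorff spaces are closed), so that standard Haar measure machinery applies. If one wanted to avoid invoking Haar measure, an alternative would be to exploit compactness of $\mathcal{G}$ to bound the orbit of the standard inner product in the convex cone of Hermitian forms and extract an invariant form via a Kakutani-type fixed-point argument, but the Haar-averaging approach is cleaner.
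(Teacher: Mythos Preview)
Your proposal is correct and follows essentially the same approach as the paper: both use the Weyl unitarian trick of averaging the standard Hermitian product against Haar measure on $\mathcal G$ to obtain a $\mathcal G$-invariant inner product, then conjugate by the change-of-basis matrix from an orthonormal basis for the new product to the standard one. Your write-up is slightly more careful about justifying positive definiteness and the existence of Haar measure, but the argument is otherwise identical.
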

\begin{proof}
Let $\nu_{\mathcal G}$ the Haar measure on $\mathcal G$ normalized in such a way that $\nu_{\mathcal G}(\mathcal G)=1$. We define a Hermitian inner product on $\mathbb C^m$ as
\[
\langle u,v\rangle_\mu=\int_{\mathcal G}\langle Mu,Mv\rangle_E \,d\nu_{\mathcal G}(M),
\]
where $\langle\,\cdot\,,\,\cdot\,\rangle_E$ is the standard Hermitian inner product on $\mathbb C^m$. Thank to the properties of the measure $\nu_{\mathcal G}$, for every $M\in \mathcal G$ and $u,v\in\mathbb C^m$, we have
\[
\langle M u,Mv\rangle_{\mathcal G}=\langle u,v\rangle_{\mathcal G}.
\]
Let $\beta_E=\{e^i_E\}$ and $\beta_{\mathcal G}=\{e^i_{\mathcal G}\}$ be two orthonormal basis of $\langle\,\cdot\,,\,\cdot\,\rangle_E$ and $\langle\,\cdot\,,\,\cdot\,\rangle_{\mathcal G}$ respectively. Let $H\in GL(m,\mathbb C)$ be the matrix such that $He^i_{\mathcal G}=e^i_E$. Given $u,v\in\mathbb C^m$ we can easily prove that
\begin{align*}
\langle u,v\rangle_{\mathcal G}=\langle Hu, Hv\rangle_E.
\end{align*}
Finally given $M\in \mathcal G$, than for every $u,v\in \mathbb C^m$ we have
\begin{align*}
\langle H^{-1}MHu,H^{-1}MHv\rangle_E&=\langle MH^{-1}u, MH^{-1}v\rangle_\mathcal G\\
&=\langle H^{-1}u,H^{-1}v\rangle_\mathcal G\\
&=\langle u,v\rangle_E\end{align*}
which implies that $H^{-1}\mathcal GH$ is a subgroup of $U(m)$.
\end{proof}

We define the set
\[
S_\nu=\{M^n_\omega\,|\,\omega\in\Omega,\, n\in \mathbb N\}.
\]
In this case $S_\nu\subset Mat(m,\mathbb C)$ has a semigroup structure, which is not necessarily true for general neutral cocycles.
\begin{proposition}
\label{vanishingLyap}
let $\nu$ be a neutral measure on $Mat(m,\mathbb C)$ with compact support, and consider the corresponding i.i.d. cocycle. The following are equivalent
\begin{enumerate}
\item the family $\mathcal M_\omega$ is almost certainly bounded,
\item the semigroup $S_\nu$ is relatively compact in $GL(m,\mathbb C)$ and is conjugated to a sub-semigroup of the unitary group $U(m)$.
\end{enumerate}
\end{proposition}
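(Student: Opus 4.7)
The easy direction, $(2) \Rightarrow (1)$, is immediate: if $\overline{S_\nu}$ is compact in $GL(m,\mathbb C)$, then $\mathcal M_\omega \subset S_\nu$ is uniformly bounded for \emph{every} $\omega$. The substance of the argument is the converse, where the plan is to use Lemma \ref{fullmeasure} to approximate an arbitrary element $M^n_\alpha$ of $S_\nu$ by sub-products $M^n_{T^{k_j}\omega}$ drawn from a single generic orbit, and then transfer the almost sure bound on $\|M^n_\omega\|$ to a uniform bound on the whole semigroup. The obstruction is that via the identity
\[
M^n_{T^{k_j}\omega} = M^{k_j+n}_\omega \cdot \bigl(M^{k_j}_\omega\bigr)^{-1},
\]
the transfer also requires controlling the inverses $(M^k_\omega)^{-1}$ along the orbit.

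The key preparatory step, which I expect to be the most delicate, is to show that $|\det M| = 1$ for every $M \in supp(\nu)$. By Lemma \ref{lyapexpandind}, $\E\log|\det M_\omega|=0$, so the Birkhoff sums $\log|\det M^n_\omega|$ form a centered i.i.d.\ random walk on $\mathbb R$. The assumed boundedness of $\|M^n_\omega\|$ forces $|\det M^n_\omega|$, and hence $\log|\det M^n_\omega|$, to be bounded above almost surely; but a non-degenerate centered i.i.d.\ random walk on $\mathbb R$ satisfies $\limsup_n S_n = +\infty$ almost surely (a classical consequence of Hewitt--Savage). Therefore $\log|\det M_\omega|$ is almost surely constant, and continuity of the determinant on the compact support promotes this to $|\det M|=1$ throughout $supp(\nu)$. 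This is really where the neutrality hypothesis enters.

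Granting $|\det M|=1$ on $supp(\nu)$, Cramer's formula yields $\|(M^n_\omega)^{-1}\|\le C_m \|M^n_\omega\|^{m-1}$, so $\{(M^n_\omega)^{-1}\}$ is also almost surely bounded. Now fix $\omega$ in the full-measure set $\Omega_a\cap \Omega_b$. For any $\alpha\in\Omega$ and $n\in\mathbb N$, Lemma \ref{fullmeasure} produces indices $k_j$ with $M^n_{T^{k_j}\omega}\to M^n_\alpha$; extracting subsequences so that $M^{k_j}_\omega\to A$ and $M^{k_j+n}_\omega\to B$ inside the compact closures (which sit in $GL(m,\mathbb C)$ by the previous step) gives $M^n_\alpha = BA^{-1}$ with norm bounded by $\sup_k \|M^k_\omega\|\cdot\sup_k\|(M^k_\omega)^{-1}\|$, uniformly in $\alpha$ and $n$. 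Thus $S_\nu$ is relatively compact in $GL(m,\mathbb C)$. To upgrade $\overline{S_\nu}$ from a semigroup to a group: for any $M\in \overline{S_\nu}$, compactness produces a convergent subsequence $M^{n_k}\to L$ with $L\in GL(m,\mathbb C)$, so $M^{n_{k+1}-n_k}\to I$, and a further subsequence yields $M^{n_{k+1}-n_k-1}\to N$ with $MN=I$, placing $M^{-1}=N$ in $\overline{S_\nu}$. Lemma \ref{compactgroup} then supplies the conjugation into $U(m)$, completing the proof.
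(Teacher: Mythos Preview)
Your proposal is correct and follows essentially the same route as the paper: first use the centered i.i.d.\ random walk $\log|\det M^n_\omega|$ to force $|\det M|\equiv 1$ on $supp(\nu)$, then bound the inverses along a single generic orbit $\omega_0\in\Omega_a\cap\Omega_b$ and transfer the resulting two-sided bound to all of $S_\nu$ via $M^n_{T^{k}\omega_0}=M^{n+k}_{\omega_0}(M^{k}_{\omega_0})^{-1}$, finally upgrading $\overline{S_\nu}$ to a compact group and invoking Lemma~\ref{compactgroup}. The only cosmetic differences are that the paper phrases the random-walk step as ``$\mathrm{Var}>0\Rightarrow X_{n_k}\to\infty$'' rather than via Hewitt--Savage, and bounds $\|M^n_{T^{k_j}\omega_0}\|$ directly without extracting the auxiliary limits $A,B$.
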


Before proceeding to the proof, we want to discuss why there is no hope that a (similar) proposition holds for a generic neutral cocycle or even for a continuous neutral cocycle over a compact space. We will show this by constructing two cocycles for which $\mathcal M_\omega$ is almost certainly bounded but for which $S_\Omega=\{M^n_\omega\,|\,\omega\in\Omega,\,n\in\mathbb N\}$ is not relatively compact in $GL(m,\mathbb C)$.

\begin{example}
Let $X=[0,1)$ with the Borel $\sigma$-algebra $\mathscr B$ and Lebesgue measure $\lambda$. Let $\theta\in \mathbb R\setminus \mathbb Q$ and define $T:X\rightarrow X$ by $Tx=x+\theta\mod 1$. It is well known that the transformation $T$ is ergodic.
Given $x_1,x_2\in X$ we write
\[
d_X(x_1,x_2)=\inf_{m\in\mathbb Z}|x_1+m-x_2|.
\]

Let $\varphi\in L^1(X,\lambda)$ be a nonnegative and unbounded function. The function $M:X\rightarrow\mathbb R^+$, defined as
\[
\log(M_x)=f(x):=\varphi(x)-\varphi(Tx),
\]
is measurable and defines a neutral cocycle over $T$.

Notice that $M^n_x=e^{\varphi(x)-\varphi(T^nx)}\le e^{\varphi(x)}$. Since $\varphi(x)<\infty$ for almost every $x\in X$, it follows that $M^n_x$ is almost surely a bounded sequence. On the other hand we can find $K_0>0$ so that the set $X_{K_0}=\{x\in X\,|\, \varphi(x)<K_0\}$ has positive measure. Since $T$ is ergodic, for almost every $x\in X$ there exists $n_0$ such that $T^{n_0}x\in X_{K_0}$. We conclude that $M^{n_0}_{x}>e^{\varphi(x)-K_0}$. Since $\varphi$ is unbounded, it follows that $S_X$ is unbounded in $\mathbb R^+$.

\end{example}
\begin{example}
In the previous example the function $f$ may be unbounded, in which case the unboundedness of $S_X$ is not particularly surprising. In this second example we will construct a function $\varphi$ for which $f$ is continuous, and therefore bounded.

Let $\omega_n= (n-1)\theta \mod 1=T^{n-1}0$ and let $a_k=2^kk$. For every $k>0$ we may choose $\varepsilon_k>0$ such that
\begin{itemize}
\item[a.] $\varepsilon_k\le 1/a_k^2$;
\item[b.] the sets $[\omega_1-\varepsilon_k,\omega_1+\varepsilon_k],\dots,[\omega_{2a_k}-\varepsilon_k,\omega_{2a_k}+\varepsilon_k]$ are pairwise disjoint.
\end{itemize}
Write $I_{j,k}=[\omega_j-\varepsilon_k,\omega_j+\varepsilon_k]$ and define $f_k$ as
\[
f_k(x):=\begin{cases}-\frac{1}{2^k}\left(1-\frac{d_X(x,\omega_j)}{\varepsilon_k}\right) &\textrm{if } x\in I_{j,k}\textrm{ for } j=1,\dots, a_k;\\
+\frac{1}{2^k}\left(1-\frac{d_X(x,\omega_j)}{\varepsilon_k}\right) &\textrm{if } x\in I_{j,k}\textrm{ for } j=a_k+1,\dots, 2a_k;\\
0&\textrm{elsewhere}.\end{cases}
\]
We define a corresponding function $\varphi_k$ as
\[
\varphi_k(x):=\begin{cases}-\sum_{i=1}^{j-1}f_k(T^{-i}x)&\textrm{if }x\in I_{j,k}\textrm{ for } j=2,\dots, 2a_k;\\
0 &\textrm{elsewhere}.\end{cases}
\]
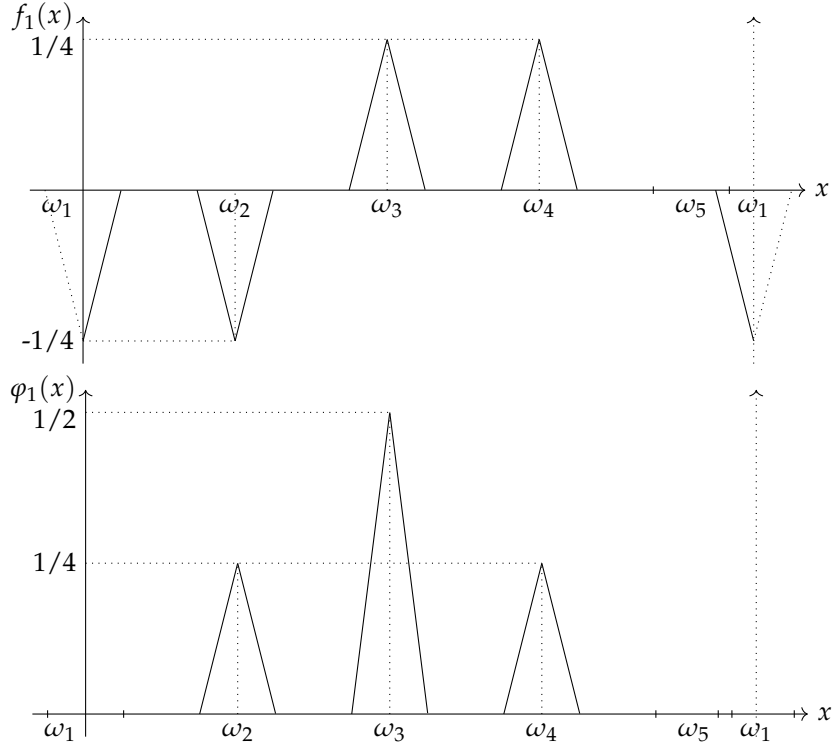
\begin{figure}[ht!]
\centering
\begin{tikzpicture}[domain=-1:9.8]
    \draw[->] (-0.7,0) -- (9.5,0) node [right]{$x$};
    \draw[->] (0,-2.3) -- (0,2.3) node [left] {$f_1(x)$};
    \draw[dotted][->] (8.82,-2.3) -- (8.82,2.3);

    \draw[dotted] plot [domain=-0.5:0,samples=100] (\x,{-2-4*\x});
    \draw plot [domain=0:0.5,samples=100] (\x,{-2+4*\x});

    \draw plot [domain=1.5:2,samples=100] (\x,{-2-4*(\x-2)});
    \draw plot [domain=2:2.5,samples=100] (\x,{-2+4*(\x-2)});
    \draw[dotted] plot [domain=-2:0,samples=100] (2,\x);

    \draw plot [domain=3.5:4,samples=100] (\x,{+2+4*(\x-4)});
    \draw plot [domain=4:4.5,samples=100] (\x,{+2-4*(\x-4)});
    \draw[dotted] plot [domain=0:2,samples=100] (4,\x);

    \draw plot [domain=5.5:6,samples=100] (\x,{+2+4*(\x-6)});
    \draw plot [domain=6:6.5,samples=100] (\x,{+2-4*(\x-6)});
    \draw[dotted] plot [domain=0:2,samples=100] (6,\x);

    \draw plot [domain=-0.05:0.05, samples=100] (7.5,\x);
	\draw plot [domain=-0.05:0.05, samples=100] (8.5,\x);
	
    \draw plot [domain=8.32:8.82,samples=100] (\x,{-2-4*(\x-8.82)});
    \draw[dotted] plot [domain=8.82:9.32,samples=100] (\x,{-2+4*(\x-8.82)});

    \draw[dotted] plot [domain=0:6,samples=100](\x,{2});
    \draw[dotted] plot [domain=0:2,samples=100](\x,{-2});

    \node[below left] at (0,0) {$\omega_1$};
    \node[below] at (2,0) {$\omega_2$};
    \node[below] at (4,0) {$\omega_3$};
    \node[below] at (6,0) {$\omega_4$};
    \node[below] at (8,0) {$\omega_5$};
    \node[below] at (8.82,0) {$\omega_1$};
    \node[left] at (0,1.9) {1/4};
    \node[left] at (0,-2) {-1/4};
\end{tikzpicture}
\begin{tikzpicture}[domain=-1:9.8]
    \draw[->] (-0.7,0) -- (9.5,0) node [right]{$x$};
    \draw[->] (0,-0.3) -- (0,4.3) node [left] {$\varphi_1(x)$};
    \draw[dotted][->] (8.82,-0.3) -- (8.82,4.3);

    \draw plot [domain=-0.05:0.05, samples=100] (-0.5,\x);
	\draw plot [domain=-0.05:0.05, samples=100] (0.5,\x);

    \draw plot [domain=1.5:2,samples=100] (\x,{+2+4*(\x-2)});
    \draw plot [domain=2:2.5,samples=100] (\x,{+2-4*(\x-2)});
    \draw[dotted] plot [domain=0:2,samples=100] (2,\x);

    \draw plot [domain=3.5:4,samples=100] (\x,{+4+8*(\x-4)});
    \draw plot [domain=4:4.5,samples=100] (\x,{+4-8*(\x-4)});
    \draw[dotted] plot [domain=0:4,samples=100] (4,\x);

    \draw plot [domain=5.5:6,samples=100] (\x,{+2+4*(\x-6)});
    \draw plot [domain=6:6.5,samples=100] (\x,{+2-4*(\x-6)});
    \draw[dotted] plot [domain=0:2,samples=100] (6,\x);

    \draw plot [domain=-0.05:0.05, samples=100] (7.5,\x);
	\draw plot [domain=-0.05:0.05, samples=100] (8.5,\x);
	
	\draw plot [domain=-0.05:0.05, samples=100] (8.32,\x);
	\draw plot [domain=-0.05:0.05, samples=100] (9.32,\x);

    \draw[dotted] plot [domain=0:6,samples=100](\x,{2});
    \draw[dotted] plot [domain=0:4,samples=100](\x,{4});

    \node[below left] at (0,0) {$\omega_1$};
    \node[below] at (2,0) {$\omega_2$};
    \node[below] at (4,0) {$\omega_3$};
    \node[below] at (6,0) {$\omega_4$};
    \node[below] at (8,0) {$\omega_5$};
    \node[below] at (8.82,0) {$\omega_1$};
    \node[left] at (0,2) {1/4};
    \node[left] at (0,3.9) {1/2};
\end{tikzpicture}
\caption{the graphs of the functions $f_1$ and $\varphi_1$}
\end{figure}

With some elementary calculations, one can verify that $f_k(x)=\varphi_k(x)-\varphi_k(Tx)$ and that
\[
\int_X\varphi_k d\lambda=a_k^2\varepsilon_k\frac{1}{2^k}\le \frac{1}{2^k}.
\]

Let $f(x)=\sum_{k=1}^\infty f_k(x)$ and $\varphi(x)=\sum_{k=1}^\infty \varphi_k(x)$. Since $|f_k(x)|\le (1/2)^k$, the function $f(x)$ is continuous. On the other hand, since all the $\varphi_k$ are nonnegative functions, also $\varphi$ is nonnegative. Furthermore the above estimate on the integral of $\varphi_k$ implies that $\varphi\in L^1(X,\lambda)$. Finally we notice that $\varphi_k(\omega_{a_k+1})=k$, and therefore that $\varphi$ is unbounded. The function $\varphi$ we constructed satisfies all the hypotheses of the previous example and $f(x)=\varphi(x)-\varphi(Tx)$ is a continuous function.

\end{example}

\begin{proof}[Proof of Proposition \ref{vanishingLyap}]
The implication $(2)\Rightarrow (1)$ is trivial. Suppose on the other hand that for almost every $\omega\in \Omega$ the family $\mathcal M_\omega$ is bounded.

Given $\omega\in \Omega$, we define $f(\omega)=\log|\det(M_{\omega})|$. By elementary properties of the determinant, we obtain that
\begin{align*}
\log|\det(M^n_\omega)|=\sum_{i=0}^{n-1}f(T^i\omega).
\end{align*}
Let $Y_i:=f(T^{i-1}\omega)$ and $X_n=\sum_{i=1}^n Y_i$. The $Y_i$-s form a sequence of i.i.d. random variables with expected value $\E(Y_i)=0$. If $\sqrt{Var(Y_i)}>0$, almost surely there exists a sequence $n_k$ so that $X^{n_k}\to\infty$, contradicting the fact that $\mathcal M_\omega$ is almost certainly bounded. It follows that $Var(Y_i)=0$, which implies that $|\det(M^n_\omega)|=1$ for all $\omega\in \Omega$ and $n\in\mathbb N$.

Let $\Omega_a$ as in the Lemma \ref{fullmeasure} and choose $\omega_0\in \Omega_a$ for which $\mathcal M_{\omega_0}$ is bounded. We write $M^{-n}_{\omega_0}=(M^n_{\omega_0})^{-1}$. Since $|\det(M^n_\omega)|=1$ for every $n$, the extended family $\widehat{\mathcal M}_{\omega_0}:=\{(M^{n}_{\omega_0})\}_{n\in \mathbb Z}$ is also bounded. Let $C$ be a bound on the norms of the matrices of this family.

Let $\omega\in\Omega$ and $n\in\mathbb N$. Since $\omega_0\in\Omega_a$, there exists $k_j$ such that $M^n_{T^{k_j}\omega_0}\to M^n_\omega$. Now
\[
\Vert M^n_{T^{k_j}\omega_0}\Vert=\Vert M^{n+k_j}_{\omega_0}\cdot M^{-k_j}_{\omega_0}\Vert<C^2.
\]
This proves that the set $S_\nu$ is bounded in $GL(m,\mathbb C)$. Its closure $G_\nu=\overline{S_\nu}$ is compact in $GL(m,\mathbb C)$.

Given $M\in G_\nu$, its orbit is also contained in $G_\nu$, thus it is bounded. Since $|\det(M)|=1$ the matrix $M$ is diagonalizable and every eigenvalue of $M$ has norm $1$. In this case there exists $n_k$ such that $M^{n_k}\to \textrm{id}$. Furthermore $M^{n_k-1}$ is a convergent sequence and its limit coincide with $M^{-1}$, hence $G_\nu$ is a compact subgroup of $GL(m,\mathbb C)$. By the previous lemma it follows that $G_\nu$ is conjugated to a subgroup of $U(m)$.
\end{proof}
\subsection{Semi-neutral measures.}\mbox{}

\medskip\noindent
Throughout the rest of this section we assume that $\nu$ is semi-neutral and with compact support, and consider the induced i.i.d. cocycle. Almost surely $M^n_\omega\not = 0$ for every $n$. Let $\mathbb P\Omega=\Omega\times \mathbb P^{m-1}$ and $\mathbb PF:\mathbb P\Omega\rightarrow\mathbb P\Omega$ be the map defined by
\[
\mathbb PF:(\omega,[v])\rightarrow \left(T\omega, [M_\omega v]\right).
\]

Finally let $\Phi:\mathbb P\Omega\rightarrow \mathbb R$ be the map $\Phi(\omega,[v])=\log\left(\Vert M_\omega v\Vert/\Vert v\Vert\right)$. For every $\omega\in\Omega$ and $v\in\mathbb C^m$ we have
\begin{equation}
\label{inducedform}
\sum_{k=0}^{n-1}\Phi\circ \mathbb P F^k(\omega,[v])=\log\frac{\Vert M^n_\omega v\Vert}{\Vert v\Vert}\le \log \Vert M^n_\omega\Vert.
\end{equation}

Let $0=\kappa_1>\dots>\kappa_s$ be the Lyapunov indices of the measure $\nu$ and $\mathbb P^{m-1}=\mathcal V_1(\omega)\supset\dots \supset\mathcal V_s(\omega)$ be the collection of vector subspaces introduced in Theorem \ref{odeselecthm}, defined for almost every $\omega\in\Omega$. We write $\mathbb P\Omega_1,\dots,\mathbb P\Omega_s$ for the family of disjoint subsets of $\mathbb P\Omega$ given by
\[
\mathbb P\Omega_j=\{(\omega,[v])\,:\,v\in \mathcal V_j(\omega)\setminus \mathcal V_{j-1}(\omega)\}.
\]
We recall the following statement:
\begin{theorem}[\cite{Led},\cite{Via}]
\label{inducedmeasure}
Given any $\mathbb PF$-invariant ergodic probability measure $m$ on $\mathbb P\Omega$ that projects down to $\mu=\nu^{\infty}$, there exists $j\in\{1,\dots,s\}$ such that
\begin{equation}
\label{ego}
\int\Phi\,dm=\kappa_j,\quad\text{and}\quad m\left(\mathbb P\Omega_j\right)=1.
\end{equation}
Conversely, given $j\in\{1,\dots,s\}$ there exists a $\mathbb PF$-invariant ergodic probability  measure $m_j$ projecting to $\mu$ and satisfying \eqref{ego}.
\end{theorem}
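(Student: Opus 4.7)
The proof splits into two independent constructions. For the direct implication, given an ergodic $\mathbb P F$-invariant probability $m$ on $\mathbb P\Omega$ that projects to $\mu$, I would apply Birkhoff's ergodic theorem to the continuous function $\Phi$ on the compact space $\mathbb P\Omega$. For $m$-a.e.\ $(\omega,[v])$,
\[
\int \Phi\, dm \;=\; \lim_{n\to\infty} \frac{1}{n}\sum_{k=0}^{n-1}\Phi\circ\mathbb P F^k(\omega,[v]) \;=\; \lim_{n\to\infty} \frac{1}{n}\log\frac{\|M^n_\omega v\|}{\|v\|},
\]
using the telescoping identity \eqref{inducedform}. Because $m$ projects to $\mu$, Oseledec's theorem (Theorem \ref{odeselecthm}) identifies the last limit as $\kappa_{j(\omega,[v])}$, where $j(\omega,[v])$ is the unique index for which $(\omega,[v])\in\mathbb P\Omega_{j(\omega,[v])}$. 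Since the left-hand side is a fixed real number, $j$ must be $m$-a.e.\ constant, say equal to $j_0$, which yields both $m(\mathbb P\Omega_{j_0})=1$ and $\int \Phi\, dm=\kappa_{j_0}$. Equivalently, one may observe that each $\mathbb P\Omega_j$ is $\mathbb P F$-invariant modulo $m$-null sets, so ergodicity of $m$ forces concentration on exactly one piece.

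For the converse, fix $j\in\{1,\dots,s\}$. Measurability of the Oseledec filtration allows one to pick a measurable section $\sigma:\Omega\to\mathbb P^{m-1}$ with $\sigma(\omega)\in\mathbb P(\mathcal V_j(\omega))\setminus\mathbb P(\mathcal V_{j+1}(\omega))$ for $\mu$-a.e.\ $\omega$. Define $\tilde m := (\mathrm{id},\sigma)_*\mu$: this is a probability measure on $\mathbb P\Omega$ concentrated on $\mathbb P\Omega_j$ and projecting to $\mu$. Consider the Ces\`aro averages
\[
m_N = \frac{1}{N}\sum_{k=0}^{N-1}(\mathbb P F)^k_*\tilde m,
\]
each of which still projects to $\mu$ because the projection $\mathbb P\Omega\to\Omega$ intertwines $\mathbb P F$ with $T$. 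Weak-$\ast$ compactness of probabilities on the compact space $\mathbb P\Omega$ yields a subsequential limit $m_\ast$ that is $\mathbb P F$-invariant and still projects to $\mu$. Decompose $m_\ast=\int m_\alpha\, d\pi(\alpha)$ into ergodic components; by ergodicity of $(T,\mu)$ each $m_\alpha$ projects to $\mu$, and the direct implication already proved says each $m_\alpha$ concentrates on some $\mathbb P\Omega_{i(\alpha)}$ with $\int \Phi\, dm_\alpha=\kappa_{i(\alpha)}$.

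To isolate a component with $i(\alpha)=j$, compute
\[
\int \Phi\, dm_N \;=\; \frac{1}{N}\int_\Omega \log\frac{\|M^N_\omega\sigma(\omega)\|}{\|\sigma(\omega)\|}\, d\mu(\omega) \;\xrightarrow[N\to\infty]{}\; \kappa_j
\]
by Oseledec combined with a dominated convergence argument based on the uniform upper bound $n^{-1}\log\|M^n_\omega\|\le \log C$ granted by compactness of $\mathrm{supp}(\nu)$. Passing to the limit along the chosen subsequence gives $\int \kappa_{i(\alpha)}\, d\pi(\alpha)=\kappa_j$; since the $\kappa_i$ are distinct, this forces $i(\alpha)=j$ on a set of positive $\pi$-measure, and any such $m_\alpha$ is the desired $m_j$. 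The main technical obstacle is controlling $\log\|M^n_\omega\sigma(\omega)\|$ from below: when $\kappa_s=-\infty$ the function $\Phi$ need not lie in $L^1$, and one must either truncate $\Phi$ from below by $-C$, pass to the limit in $N$, and finally send $C\to\infty$, or else replace the pointwise application of Oseledec with Kingman's subadditive ergodic theorem applied to $\log\|M^n_\omega v\|$ along the measurable section $\sigma$.
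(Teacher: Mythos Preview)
The paper does not give its own proof of Theorem~\ref{inducedmeasure}; it is quoted as a known result from \cite{Led} and \cite{Via}. So there is no ``paper's proof'' to compare against, and your attempt should be judged on its own merits.

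Your argument for the direct implication is the standard one and is essentially correct: Birkhoff applied to $\Phi$ together with the telescoping identity \eqref{inducedform} and Oseledec identifies $\int\Phi\,dm$ with some $\kappa_j$, and ergodicity forces $j$ to be $m$-a.e.\ constant. You should, however, say why $\Phi\in L^1(m)$ (it is bounded above by compactness of $\mathrm{supp}(\nu)$, but a lower bound is not automatic when matrices in $\mathrm{supp}(\nu)$ are singular; one usually handles this via $\Phi^-\le \log^+\|(M_\omega)^{-1}\|$ when the cocycle is invertible, or by an easy truncation otherwise).

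The converse, as written, has a genuine gap. From $\int\Phi\,dm_\ast=\kappa_j$ and the ergodic decomposition you obtain $\int \kappa_{i(\alpha)}\,d\pi(\alpha)=\kappa_j$, and you then assert that ``since the $\kappa_i$ are distinct, this forces $i(\alpha)=j$ on a set of positive $\pi$-measure''. This inference is false: $\kappa_j$ can perfectly well be a nontrivial convex combination of the other $\kappa_i$ (e.g.\ $\kappa_1=0,\ \kappa_2=-1,\ \kappa_3=-2$ and $j=2$). To make this step work you would need, in addition, a one-sided bound such as $\kappa_{i(\alpha)}\le\kappa_j$ for $\pi$-a.e.\ $\alpha$, which would follow if you knew $m_\ast$ is concentrated on the measurable (but not closed) bundle $\{(\omega,[v]):v\in\mathcal V_j(\omega)\}$. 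Your $m_N$ are concentrated there, since $\mathbb P\Omega_j$ is $\mathbb PF$-invariant, but this is not preserved under weak-$\ast$ limits. The references you are reproducing avoid this difficulty by a different construction: one works not with a section into $\mathcal V_j\setminus\mathcal V_{j+1}$ but with the induced cocycle on the quotient bundle $\mathcal V_j/\mathcal V_{j+1}$ (or, equivalently, with suitable exterior powers / flag bundles), where $\kappa_j$ becomes the \emph{top} exponent and the desired inequality $\int\Phi\,dm\le\kappa_j$ holds for every invariant measure; any limit of Ces\`aro averages then automatically realises $\kappa_j$, and one lifts back to $\mathbb P\Omega$. Your truncation remark addresses the integrability issue but not this structural one.
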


The following lemma closely resembles Lemma 3.6 of \cite{Gu}, we provide a proof for the sake of completeness.
\begin{lemma}
Let $(X,T,\mu)$ be an ergodic dynamical system and $f:X\rightarrow \mathbb R$ an integrable function. Suppose that $\int_Xf\,d\mu=0$, then almost surely there exists a sequence $n_k$ such that
\[
\lim_{k\to\infty}\sum_{i=0}^{n_k-1}f\circ T^ix=0
\]
\end{lemma}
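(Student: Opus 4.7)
I plan to recast the statement via the standard skew-product construction. Let $F : X \times \mathbb{R} \to X \times \mathbb{R}$ be defined by $F(x, t) = (Tx,\, t + f(x))$, so that $F^n(x, t) = (T^n x,\, t + S_n(x))$ where $S_n(x) := \sum_{i=0}^{n-1} f(T^i x)$. Since $T$ preserves $\mu$ and translations preserve Lebesgue measure $\lambda$ on $\mathbb{R}$, the map $F$ preserves the $\sigma$-finite measure $\tilde\mu := \mu \otimes \lambda$. The conclusion sought is equivalent to saying that, for $\mu$-a.e.\ $x$, the forward $F$-orbit of $(x, 0)$ re-enters $X \times (-\delta, \delta)$ infinitely often for each $\delta > 0$.

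The crucial step is to show that $F$ is conservative, i.e., admits no wandering set of positive $\tilde\mu$-measure. This is Atkinson's recurrence theorem, and it is precisely here that the hypothesis $\int f\, d\mu = 0$ is used beyond its Birkhoff corollary $S_n/n \to 0$. The scheme I have in mind is by contradiction: if $W$ is wandering with $0 < \tilde\mu(W) < \infty$, after a vertical truncation we may assume $W \subset X \times [-N, N]$. Disjointness of $\{F^n W\}_n$ inside each slab $X \times [-M, M]$ yields $\sum_n \tilde\mu(F^n W \cap (X \times [-M, M])) \le 2M$, so by the first Borel--Cantelli lemma $|t + S_n(x)| \to \infty$ for $\tilde\mu$-a.e.\ $(x, t) \in W$. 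By Fubini this forces $|S_n(x)| \to \infty$ for $\mu$-a.e.\ $x$ in the positive-measure projection $\pi_1(W)$; using $T$-invariance (up to null sets) of $\{S_n \to +\infty\}$ and $\{S_n \to -\infty\}$ combined with ergodicity, one of these sets would have full $\mu$-measure. Ruling this out is the technical core of the argument and my main obstacle: it requires the $L^1$-ergodic theorem $\int |S_n|/n\, d\mu \to 0$ together with the identity $\int S_n^+\, d\mu = \int S_n^-\, d\mu$ (from $\int S_n\, d\mu = 0$), combined with a delicate Fatou-type analysis of the positive part $S_n^+$.

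Granting conservativity, Halmos' recurrence theorem applies: for every measurable $A \subset X \times \mathbb{R}$ with $0 < \tilde\mu(A) < \infty$, $\tilde\mu$-a.e.\ point of $A$ returns to $A$ infinitely often under $F$. I will apply this to $A_\delta := X \times (-\delta, \delta)$, which has measure $2\delta$. By Fubini, for $\mu$-a.e.\ $x \in X$, Lebesgue-a.e.\ $t \in (-\delta, \delta)$ is recurrent, so there are infinitely many $n$ with $|t + S_n(x)| < \delta$; since $|t| < \delta$, this forces $|S_n(x)| < 2\delta$ for infinitely many $n$. Consequently $\liminf_n |S_n(x)| \le 2\delta$ $\mu$-a.s., and intersecting the resulting full-measure sets over the countable sequence $\delta = 1/k$ gives $\liminf_n |S_n(x)| = 0$ $\mu$-a.s., which produces the desired subsequence $n_k \to \infty$ with $S_{n_k}(x) \to 0$.
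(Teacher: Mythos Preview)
Your overall architecture --- the skew product $F(x,t)=(Tx,t+f(x))$ on $X\times\mathbb R$ preserving $\mu\otimes\lambda$, conservativity of $F$, then recurrence to the slabs $X\times(-\delta,\delta)$ followed by a Fubini/countable-intersection argument --- is exactly the route the paper takes. The divergence, and the difficulty, lies entirely in how you establish conservativity.

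Two problems with your version. First, a small one: from $|S_n(x)|\to\infty$ on a positive-measure set you cannot pass to $\{S_n\to+\infty\}$ or $\{S_n\to-\infty\}$ having positive measure, since $|S_n|\to\infty$ permits sign oscillation; what does survive is that $\{x:|S_n(x)|\to\infty\}$ is itself $T$-invariant (because $S_n(Tx)=S_{n+1}(x)-f(x)$) and hence of full measure by ergodicity. Second, the real gap: ruling out $|S_n|\to\infty$ a.e.\ under the sole hypothesis $\int f\,d\mu=0$ is essentially Atkinson's theorem itself, and the ingredients you list ($\|S_n\|_1/n\to 0$, $\int S_n^+=\int S_n^-$, Fatou) do not assemble into a contradiction without a further idea --- you are right to flag this as your main obstacle, and you have not closed it. The paper sidesteps it entirely. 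After passing to the natural extension so that $T$ is invertible, take a wandering set $A$ and use Egorov to find $B\subset A$ of positive finite measure, contained in some $X\times[-K,K]$, on which $S_n/n\to 0$ uniformly. Then the $\mathbb R$-projection of $\bigcup_{i\le n}F^i(B)$ lies in an interval of length $o(n)$, so $\tilde\mu\bigl(\bigcup_{i\le n}F^i(B)\bigr)=o(n)$; but disjointness of the $F^i(B)$ forces this measure to equal $(n+1)\,\tilde\mu(B)$, a contradiction. Replace your Borel--Cantelli route with this Egorov/volume-counting argument and the rest of your plan goes through essentially as written.
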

\begin{proof}
By replacing $(X,T,\mu)$ with its natural extension if necessary, we may assume that $T$ is invertible.
We consider the product $Y=X\times \mathbb R$ and the map $S:Y\rightarrow Y$ defined as
\[
S(x,r):=(Tx,r+f(x)).
\]
The transformation $S$ preserves the measure $\nu=\mu\times \lambda$, where $\lambda$ is the Lebesgue measure of $\mathbb R$. We can write
\[
S^n(x,r)=\left(T^nx,r+\sum_{i=0}^{n-1}f\circ T^ix\right)=(T^nx,s_n(x,r)).
\]

Suppose that there exists a \textit{wandering} subset $A\subset Y$, i.e. $\nu(A)>0$ and for every couple $i\neq j$ the sets $S^i(A)$ and $S^j(A)$ are disjoint, up to a set of measure $0$. By ergodicity of $\mu$, for almost every $(x,r)\in Y$ we have
\[
\lim_{n\to\infty}\frac{s_n(x,r)}{n}=0.
\]
We can choose $B\subset A$ of finite positive measure and such that the limit above holds on $B$ uniformly. Since $\nu(\bigcup_{0\le i\le n} S^i(B))\le \lambda (\bigcup_{0\le i\le n} s_i(B))$, it follows that $\lim_n \nu(\bigcup_{0\le i\le n} S^i(B))/n=0$. On the other hand, since $A$ is wandering and $\nu$ is $S$-invariant we find that $\nu(\bigcup_{0\le i\le n} S^i(B))/n=\nu(B)>0$; which contradicts the assumption on $A$.

Let $\varepsilon>0$ and $Y_{\varepsilon}=X\times[-\varepsilon,+\varepsilon] $. We write $C_\varepsilon$ for the set of all points of $Y_\varepsilon$ which do not return to $Y_\varepsilon$, under iterations of the map $S$. The map $S$ is invertible, therefore if $\nu(C_\varepsilon)>0$ then the set $C_\varepsilon$ is wandering. Since there are no wandering sets, it follows that $\nu(C_\varepsilon)=0$.

Let $X_\varepsilon\subset X$ be the set of all points $x$ for which there exists $n>0$ such that $-\varepsilon\le s_n(x,0)\le \varepsilon$. We note that $(X\setminus X_\varepsilon)\times [-\varepsilon/2,\varepsilon/2]\subset C_{\varepsilon/2}$, thus $X_\varepsilon$ is a full measure subset of $X$. The set
\[
X_0=\bigcap_n T^{-n}\left(\bigcap_k X_{1/k}\right)
\]
is the intersection of countable many full measure sets, thus it has full measure. Take $x_0\in X_0$,  since $X_0\subset \cap _kX_{1/k}$, then for every $k$ there exists a minimal $n_k>0$ such that $-1/k\le s_{n_k}(x_0,0)\le 1/k$. We notice that $n_{k+1}\ge n_k$, thus either $n_m\to\infty$, in which case the lemma is proved, or there exists $n'_1$ such that $s_{n'_1}(x_0,0)=0$. If the second case occurs, we let $x_1=T^{n'_1}x_0\in X_0$ and we repeat the argument for $x_1$. It follows that one can always construct a sequence which satisfies the requirements of the Lemma.
\end{proof}

\begin{proposition}
\label{derivativedoesnotvanish}
Let $\nu$ be a semi-neutral measure with compact support. Suppose that $\mathcal M_\omega$ is almost surely bounded, then for every $M^n_\omega\in S_\nu$ we have
\[
\Vert M^n_\omega\Vert\ge 1.
\]
\end{proposition}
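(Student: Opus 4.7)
The plan is to argue by contradiction: assume there exist $\alpha \in \Omega$ and $N \in \mathbb N$ with $\Vert M^N_\alpha\Vert = c < 1$, and construct a single pair $(\omega, [v]) \in \mathbb P\Omega$ for which $\Vert M^n_\omega v\Vert \le \tfrac{1}{2}\Vert v\Vert$ holds for every $n$ past a fixed threshold, while the preceding lemma simultaneously forces a subsequence $n_k \to \infty$ with $\Vert M^{n_k}_\omega v\Vert \to \Vert v\Vert$. That is the desired contradiction.

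The key quantitative input is a uniform bound on a positive-measure set. Since $\mathcal M_\omega$ is a.s. bounded, the lower-semicontinuous function $C(\omega) := \sup_n\Vert M^n_\omega\Vert$ is $\mu$-a.s. finite, so we may pick $K < \infty$ with $\mu(\Omega_K) \ge \tfrac{1}{2}$, where $\Omega_K := \{C \le K\}$. Continuity of $\beta \mapsto M^N_\beta$ and $\alpha \in \mathrm{supp}(\mu)$ give an open neighborhood $U \ni \alpha$, depending only on the first $N$ coordinates, with $\Vert M^N_\beta\Vert \le c' < 1$ for all $\beta \in U$. Fix $P \in \mathbb N$ with $(c')^P < 1/(2K)$ and set
\[
V^{(P)} := \bigcap_{i=0}^{P-1} T^{-iN}U, \qquad \tilde V := V^{(P)} \cap T^{-PN}\Omega_K.
\]
Each $T^{-iN}U$ depends only on the block of coordinates $iN+1,\dots,(i+1)N$, and these blocks are disjoint as $i$ varies; similarly, $T^{-PN}\Omega_K$ depends only on the coordinates strictly beyond $PN$. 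The product structure $\mu = \nu^{\otimes\mathbb N}$ then yields $\mu(V^{(P)}) = \mu(U)^P$ and $\mu(\tilde V) = \mu(U)^P\mu(\Omega_K) > 0$. For $\omega \in \tilde V$, the decomposition $M^{PN}_\omega = M^N_{T^{(P-1)N}\omega}\cdots M^N_{T^N\omega}\,M^N_\omega$ gives $\Vert M^{PN}_\omega\Vert \le (c')^P$, and $\sup_m\Vert M^m_{T^{PN}\omega}\Vert \le K$ by definition of $\Omega_K$; hence for every nonzero $v$ and every $n \ge PN$,
\[
\Vert M^n_\omega v\Vert \le \Vert M^{n-PN}_{T^{PN}\omega}\Vert\,\Vert M^{PN}_\omega v\Vert \le K(c')^P\Vert v\Vert \le \tfrac{1}{2}\Vert v\Vert.
\]

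By Theorem \ref{inducedmeasure} with $j=1$, there is an ergodic $\mathbb PF$-invariant probability measure $m_1$ on $\mathbb P\Omega$ projecting to $\mu$ with $\int\Phi\,dm_1 = \kappa_1 = 0$. The preceding lemma applied to $(\mathbb P\Omega, \mathbb PF, m_1, \Phi)$, combined with \eqref{inducedform}, produces a full-$m_1$-measure set on which there exists $n_k \to \infty$ with $\Vert M^{n_k}_\omega v\Vert/\Vert v\Vert \to 1$. Since $m_1$ projects to $\mu$, the set $\tilde V \times \mathbb P^{m-1}$ has positive $m_1$-measure, so we may pick $(\omega, [v])$ inside both sets. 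For $k$ large enough we then have $n_k \ge PN$ and $\Vert M^{n_k}_\omega v\Vert > \tfrac{3}{4}\Vert v\Vert$, contradicting the bound $\Vert M^n_\omega v\Vert \le \tfrac{1}{2}\Vert v\Vert$ established for all $n \ge PN$. Thus no such $M^N_\alpha$ exists. The main obstacle is to ensure that the strong initial contraction and the uniformly bounded tail coexist on the same $\omega$; this is resolved by exploiting the independence built into the product measure $\mu$, which separates these requirements into disjoint coordinate blocks and keeps $\mu(\tilde V)$ strictly positive.
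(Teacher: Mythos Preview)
Your proof is correct and follows essentially the same argument as the paper: build a positive-measure set on which an initial contraction followed by a uniformly bounded tail forces $\Vert M^n_\omega v\Vert \le \tfrac{1}{2}\Vert v\Vert$ for all large $n$, then derive a contradiction from the preceding lemma applied to the ergodic measure $m_1$ of Theorem~\ref{inducedmeasure} with $\int\Phi\,dm_1 = \kappa_1 = 0$. The only cosmetic difference is that the paper invokes the semigroup structure of $S_\nu$ to assume $\Vert M^N_{\omega_0}\Vert$ is already as small as needed, whereas you obtain the same effect by intersecting $P$ shifted copies of the cylinder $U$.
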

\begin{proof}
Given $K>0$, let
\[
\Omega_K:=\left\{\omega\in\Omega\,\big|\, \sup_n\log\Vert M^n_\omega\Vert\le K\right\}.
\]
The set $\Omega_b=\bigcup_{K>0}\Omega_K$ coincides with the set of all $\omega$ for which $\mathcal M_\omega $ is bounded. It then follows that $\lim_{K\to\infty}\nu(\Omega)=1$ and that we can choose $K_0>0$ so that $\nu(\Omega_{K_0})>0$.

Suppose that there exists $M^{N}_{\omega_0}\in S_\nu$ with $\Vert M^N_{\omega_0}\Vert<1$.
By the semigroup structure of $S_\nu$ we may further assume that $\Vert M^N_{\omega_0}\Vert \le 1/(3K_0)$.

For $i=1,\dots,N$ we choose $U_i\subset Mat(m,\mathbb C)$ such that the set $\mathcal C_{U_1,\dots U_N}$ has positive $\mu$-measure and $\Vert M^N_\omega\Vert \le 1/(2K_0)$ for every $\omega\in\mathcal C_{U_1,\dots U_N}$.

We define the set
\[
\mathcal U:=\mathcal C_{U_1,\dots U_N}\cap T^{-N}(\Omega_{K_0})=U_1\times \dots \times U_N\times \Omega_{K_0}.
\]
It is clear that this set has positive $\mu$-measure.

Let $m_1$ be the $\mathbb PF$-invariant ergodic measure on $\mathbb P\Omega$ defined in Theorem \ref{inducedmeasure}. Given $(\omega,[v])\in\mathcal U\times \mathbb P^{m-1}$, for every $n\ge N$ we have
\begin{align*}
\sum_{k=0}^{n-1}\Phi\circ \mathbb P F^k(\omega,[v])&\le
\log\Vert M^n_\omega\Vert\\
&\le\log \Vert M^N_\omega\Vert\Vert M^{n-N}_{\sigma^N(\omega)}\Vert\\
&\le -\log 2.
\end{align*}
By \eqref{ego} we have $\int_\Omega \Phi\,dm_1\not = 0$ and $m_1(\mathcal U\times \mathbb P^{m-1}) =\mu(\mathcal U)>0$, contradicting the previous Lemma. It follows that for every $M^n_\omega\in S_\nu$ we have $\Vert M^n_\omega\Vert\ge 1$.
\end{proof}

\section{Compositions of random germs}
\label{section:generalprob}
Let $\mathcal O(\mathbb C^m,0)$ be the space of all germs of holomorphic functions fixing the origin. This set can be endowed with the so-called inductive limit topology $\tau_{ind}$, see the appendix of this paper for more details. Let $\nu$ be a Borel probability measure on the set $\mathcal O(\mathbb C^m,0)$. Let $\Omega$ be the space of all sequences in $supp(\nu)$ and $\mu=\nu^\infty$. Let $T:\Omega\rightarrow\Omega$ denote the left shift function and $f:\Omega\rightarrow \mathcal O(\mathbb C^m,0)$ be the function which returns the first element of the sequence. As we will see later, the function $f$ defines a (non linear) cocycle over $T$. In analogy with the previous section we will write
\[
f^n_\omega=f_{T^{n-1}\omega}\circ \cdots\circ f_\omega,
\]
and $\mathcal F_\omega=\{f^n_\omega\}$. Finally we define
\[
\Omega_{nor}=\{\omega\in\Omega\,|\, \mathcal F_\omega\textrm{ is a normal family near the origin}\}.
\]

\begin{proposition}
The set $\Omega_{nor}$ is measurable.
\end{proposition}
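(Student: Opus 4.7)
My plan is to express $\Omega_{nor}$ as a countable Boolean combination of sets defined by uniform bounds on iterates, then reduce measurability to two topological facts about the inductive limit topology on $\mathcal{O}(\mathbb{C}^m,0)$. First, by Montel's theorem in several complex variables, $\mathcal{F}_\omega$ is normal near the origin if and only if there exist $\delta,M>0$ such that every $f^n_\omega$ extends holomorphically to $\mathbb{B}_\delta$ and satisfies $\Vert f^n_\omega\Vert_{\mathbb{B}_\delta}\le M$. Restricting $\delta$ and $M$ to the positive rationals yields
\[
\Omega_{nor}=\bigcup_{\delta,M\in\mathbb{Q}^+}\bigcap_{n\in\mathbb{N}}A_{n,\delta,M},
\]
where $A_{n,\delta,M}$ denotes the set of $\omega$ for which the germ $f^n_\omega$ admits a holomorphic extension to $\mathbb{B}_\delta$ with sup norm at most $M$. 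The union and intersection being countable, it suffices to show each $A_{n,\delta,M}$ is Borel.

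To this end I will study the map $\varphi_n\colon\Omega\to\mathcal{O}(\mathbb{C}^m,0)$ sending $\omega$ to $f^n_\omega$. Each coordinate map $\omega\mapsto f_{T^{i-1}\omega}$ is continuous, being the composition of the shift $T^{i-1}$ with the continuous evaluation $f$. Thus $\varphi_n$ factors as the continuous product map $\Omega\to\mathcal{O}(\mathbb{C}^m,0)^n$ followed by the composition operation $(g_1,\dots,g_n)\mapsto g_n\circ\cdots\circ g_1$. Granting that composition is continuous in $\tau_{ind}$ (a fact established in the appendix), $\varphi_n$ is continuous and hence Borel.

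Next, let $S_{\delta,M}\subset\mathcal{O}(\mathbb{C}^m,0)$ be the set of germs extending to a holomorphic map on $\mathbb{B}_\delta$ of sup norm at most $M$. Concretely, $S_{\delta,M}$ is the image, under the canonical inclusion of $\mathcal{O}^\infty(\mathbb{B}_\delta)$ into the inductive limit, of the closed ball of radius $M$. Using the description of $\tau_{ind}$ in the appendix one checks that this image is closed, and therefore Borel, in $\mathcal{O}(\mathbb{C}^m,0)$. Since $A_{n,\delta,M}=\varphi_n^{-1}(S_{\delta,M})$ is the preimage of a Borel set under a Borel map, it is Borel; taking countable intersections and unions then yields measurability of $\Omega_{nor}$.

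The genuine content of the argument lies in the two topological facts about $\tau_{ind}$, continuity of the composition operation and closedness of $S_{\delta,M}$; both depend on an explicit understanding of convergent nets in the inductive limit, which is the reason they are deferred to the appendix. Once they are in hand the remaining steps are a routine application of the measure-theoretic stability of Borel sets under countable unions, intersections, and continuous preimages.
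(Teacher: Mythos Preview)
Your argument is essentially the same as the paper's: both express $\Omega_{nor}$ as a countable union over radius/bound parameters of countable intersections $\bigcap_n\{\omega:f^n_\omega\in\overline B\}$, then invoke measurability of $\omega\mapsto f^n_\omega$ together with closedness of the sup-norm balls in $\tau_{ind}$. The paper uses the fixed sequence $(\varepsilon_n,r_n)$ while you use rational $(\delta,M)$, but this is cosmetic.

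One correction: you assert that continuity of the composition map is ``established in the appendix,'' but it is not---the appendix treats convergence, compactness, and the derivation maps $D_\alpha$, not composition. The paper itself simply states that $\omega\mapsto f^m_\omega$ is measurable without further justification, so you are not missing anything the paper supplies; just don't attribute to the appendix a result that isn't there. If you want to justify continuity of $\varphi_n$ directly, note that $supp(\nu)$ is compact, hence contained in some $X_N$ with a uniform bound, and then iterated composition stays in a fixed $X_{N'}$ on which the metric topology makes sequential continuity of composition elementary via uniform estimates.
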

\begin{proof}
Normality near the origin is equivalent to relatively compactness of the set $\mathcal F_\omega$ with respect to $\tau_{ind}$. Furthermore by Corollary \ref{compactset3} the set $\mathcal F_\omega$ is relatively compact in $\mathcal O(\mathbb C^m,0)$ if and only if $\mathcal F_\omega\subset\overline B_n(0,r)$ for some natural number $n$ and $r>0$. The terminology $\overline B_n(0,r)$ is introduced in the appendix.

Let $r_n$ be an increasing sequence of positive real numbers such that $r_n\to \infty$. Since $\overline B_n(0,r)\subset \overline B_{n+1}(0,r)$, it follows that
\begin{align*}
\Omega_{nor}&=\bigcup_n\{\omega\in\Omega\,|\, \mathcal F_\omega\subset\overline B_n(0,r_n)\}\\
&=\bigcup_n\bigcap_m \{\omega\in\Omega\,|\,f^m_\omega\in \overline B_n(0,r_n)\}.
\end{align*}
The function $\omega\mapsto f^m_\omega$ is measurable for every $m$. By Lemma \ref{relatcomp} and Corollary \ref{compactset3},  $\overline B_n(0,r_n)$ is a closed set in the inductive limit topology. This proves that $\Omega_{nor}$ is the intersection of countably many measurable sets, and thus that it is a measurable set.
\end{proof}
\medskip

\noindent{\bf Problem}\emph{ Let $\omega\in\Omega$ be a random sequence of germs. What is the probability that $\mathcal F_{\omega}$ is a normal family?}
\medskip

We notice that the set $\Omega_{nor}$ is backward invariant. By ergodicity of $T$ it follows that
\begin{proposition}
Normality of $\mathcal F_{\omega}$ occurs either with probability $1$ or $0$, depending on the probability measure $\mu$.
\end{proposition}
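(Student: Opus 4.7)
The plan is to verify the backward invariance claim asserted just before the proposition and then invoke the ergodicity of $T$ (together with the $0$--$1$ law for ergodic transformations), both of which have already been established in the paper.

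The first step is to write down the obvious cocycle identity: for every $\omega\in\Omega$ with first coordinate $f_1$, one has
\[
f^{n+1}_\omega=f^n_{T\omega}\circ f_1.
\]
I would use this to show $T^{-1}\Omega_{nor}\subset \Omega_{nor}$. Suppose $\omega\in T^{-1}\Omega_{nor}$, so $\mathcal F_{T\omega}$ is normal near the origin. By Corollary~\ref{compactset3}, this means there exist $r>0$ and $M>0$ such that every $f^n_{T\omega}$ is holomorphic on $\overline{\mathbb B_r}$ and bounded there by $M$. Since $f_1$ is a holomorphic germ fixing $0$, choose $r'>0$ with $f_1(\overline{\mathbb B_{r'}})\subset \overline{\mathbb B_r}$. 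Then the composition $f^n_{T\omega}\circ f_1=f^{n+1}_\omega$ is holomorphic on $\overline{\mathbb B_{r'}}$ and uniformly bounded by $M$ there for every $n\ge 0$. Adjoining the single germ $f_1$ (whose restriction is also bounded on a possibly smaller ball) shows that the whole family $\mathcal F_\omega=\{f^n_\omega\}$ sits inside some $\overline{B_k(0,R)}$, so $\omega\in\Omega_{nor}$.

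The second step is the standard ergodic $0$--$1$ argument. Because $T$ preserves $\mu$ (as recalled in Section~\ref{section:randommatrices}), $\mu(T^{-1}\Omega_{nor})=\mu(\Omega_{nor})$, and since $T^{-1}\Omega_{nor}\subset\Omega_{nor}$ we conclude $\mu(\Omega_{nor}\setminus T^{-1}\Omega_{nor})=0$. Hence $\Omega_{nor}$ is $T$-invariant modulo $\mu$-null sets. By ergodicity of $T$ on $(\Omega,\mathscr B,\mu)$, every such invariant set has measure $0$ or $1$, which is exactly the proposition.

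I do not expect any real obstacle here: the only care needed is to translate ``normal family near the origin'' into the concrete boundedness criterion $\mathcal F_\omega\subset\overline{B_n(0,r)}$ provided by the appendix, so that continuity of $f_1$ at its fixed point suffices to push the bound through the composition. (Note that the reverse inclusion $\Omega_{nor}\subset T^{-1}\Omega_{nor}$ is \emph{not} needed and in fact could fail pointwise, for instance when $f_1$ is not open at $0$; the measure-preservation of $T$ is what eliminates this asymmetry.)
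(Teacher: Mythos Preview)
Your proposal is correct and follows exactly the approach indicated in the paper: the paper's entire proof is the single observation ``$\Omega_{nor}$ is backward invariant, by ergodicity of $T$ it follows that\ldots'', and you have simply supplied the details behind that sentence. Your verification of $T^{-1}\Omega_{nor}\subset\Omega_{nor}$ via the cocycle identity and the compactness criterion from the appendix, followed by the standard measure-preservation upgrade to invariance modulo null sets, is precisely what the paper leaves implicit.
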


\begin{definition}[Random Fatou set]
We say that the origin belongs to the \textit{(local) random Fatou set} if $\Omega_{nor}$ has full measure. If this is not the case we say that the origin belongs to the \textit{(local) random Julia set}.
\end{definition}
Suppose now that the measure $\nu$ has compact support. By Corollary \ref{compactset3}, there exists $R>0$ such that each $f\in supp(\nu)$ is holomorphic and bounded on $\mathbb B_R$, the open ball of $\mathbb C^m$ with radius $R$ and center the origin. Given $\omega\in\Omega$ we may define the set
\begin{equation}
\label{invariancerel}
D_{\omega}:=\bigcap_{n=0}^\infty \left(f^n_\omega\right)^{-1}(\mathbb B_R)
\end{equation}
Clearly $0\in D_{\omega}$, thus $D_{\omega}$ is not empty. The following invariance relation holds $$ f^n_\omega\left(D_{\omega}\right)=D_{T^n(\omega)}.$$

Given $z\in D_\omega$ its orbit $\{f^n_\omega(z)\,|\,n\in\mathbb N\}$ is well defined and bounded. By the weak Montel Theorem the family $\mathcal F_\omega$ is normal on $\textrm{int}( D_\omega)$. We will call this set the \textit{(local) Fatou set} of the sequence $\omega$ and we will denote it as $F_\omega$. It is clear that the origin lies in the random Fatou set if and only if it  almost certainly lies in the Fatou set of $\omega$.

We define
\[
D:=\{(\omega,z)\in\Omega\times \mathbb C^m\,|\, z\in D_\omega\}.
\]
The \textit{(non linear) cocycle} defined by $f$ over $T$ is the skew-product transformation $F: D\rightarrow D$
\[
F(\omega,z)=(T\omega,f_\omega(z)),\qquad F^n(\omega,z)=(T^n\omega,f^n_\omega(z)).
\]

By Lemma \ref{derivation} the map $d_0:f\mapsto df(0)$ is a continuous map. Following the previous section, we introduce the following classification of probability measures on $\mathcal O(\mathbb C^m,0)$:
\begin{definition}
Let $\nu$ be a probability  measure on $\mathcal O(\mathbb C^m,0)$ so that $(d_0)_*\nu$ has compact support. We say that the measure $\nu$ is \textit{attracting, repelling, neutral or semi-neutral} if $(d_0)_*\nu$ is respectively attracting, repelling, neutral or semi-neutral.
\end{definition}

We are now ready to state the central theorem of this paper. Notice that the neutral case coincides with Theorem \ref{neutralmeasure} of the introduction .
\begin{theorem}
\label{theorem:maintheorem}
Let $\nu$ be a probability measure on $\mathcal O(\mathbb C^m,0)$ with compact support, and suppose that $\nu$ is either attracting, repelling or neutral.
\begin{itemize}
\item[]\textbf{Attracting:} the origin lies in the random Fatou set, and it is almost surely an attracting point for the system $\{f^n_\omega|_{D_{\omega}}\}$.
\item[]\textbf{Repelling:} the origin lies in the random Julia set.
\item[]\textbf{Neutral:} the origin lies in the random Fatou set if and only if all the germs in $supp(\nu)$ are simultaneously linearizable, and the semigroup of the differentials
\[
dS_\nu:=\{df^n_\omega(0)\,|\,\omega\in \Omega,\, n\in \mathbb N\},
\]
is conjugate to a sub-semigroup of $U(m)$.
\end{itemize}
\end{theorem}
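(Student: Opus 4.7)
The plan is to handle the three cases separately, in each case reducing the nonlinear problem to the linear theory of Section \ref{section:randommatrices} plus Taylor-type estimates that follow from compactness of $supp(\nu)$. The first two cases are essentially direct consequences of the linear results. For the \textbf{attracting} case, Furstenberg's theorem \eqref{Furstthm} applied to the derivative cocycle $(d_0)_*\nu$ gives $n^{-1}\log\Vert df^n_\omega(0)\Vert\to\kappa_1<0$ almost surely. Since $supp(\nu)$ is compact in the inductive limit topology, there is a common radius $R>0$ on which every $f\in supp(\nu)$ is holomorphic and uniformly bounded, and hence a uniform remainder estimate $\Vert f(z)-df(0)z\Vert\le C\Vert z\Vert^2$ on $\mathbb B_{R/2}$. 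A standard induction then shows that, for $\omega$ in the full measure set where the Lyapunov rate holds, $f^n_\omega$ contracts some ball $\mathbb B_{\rho(\omega)}$ into itself with exponential rate, placing $0$ in $F_\omega$ as an attractor. For the \textbf{repelling} case, $\Vert df^n_\omega(0)\Vert$ blows up exponentially a.s., while normality of $\mathcal F_\omega$ near $0$ would force these derivatives to be bounded via Cauchy estimates on a common neighborhood, a contradiction.

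The \textbf{neutral} case is the heart of the theorem. The \emph{if} direction is immediate: if a biholomorphism $\varphi$ simultaneously linearizes every $f\in supp(\nu)$ and conjugates $dS_\nu$ into $U(m)$, then every $f^n_\omega$ becomes an isometry of a common Hermitian norm, the family $\{f^n_\omega\}$ is uniformly equicontinuous on an invariant ball, and $0$ lies in $F_\omega$ for every $\omega$. For the \emph{only if} direction I would proceed in three steps. \emph{Step 1:} normality of $\mathcal F_\omega$ near $0$ a.s.\ implies boundedness of $\{df^n_\omega(0)\}$ by Cauchy estimates, so Proposition \ref{vanishingLyap} applies and $dS_\nu$ is relatively compact in $GL(m,\mathbb C)$ and conjugate to a sub-semigroup of $U(m)$. \emph{Step 2:} adapting the one-dimensional argument from the introduction, use an ergodic recurrence argument (the nonlinear analogue of Lemma \ref{fullmeasure}, which is the promised Lemma \ref{trappingcor}) to pick a single $\omega_0$ with $\mathcal F_{\omega_0}$ normal near $0$ and whose shifts $T^N\omega_0$ approximate every finite composition from $S_\nu$. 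In one variable Koebe distortion propagates normality of $\mathcal F_{\omega_0}$ to uniform univalence on a fixed ball; in several variables the substitute comes from Step 1, which says the derivatives $df^n_{\omega_0}(0)$ lie in a compact subgroup of $GL(m,\mathbb C)$ and in particular their inverses are uniformly bounded. Combined with normality of $\mathcal F_{\omega_0}$, this yields a common $\delta_0>0$ on which every $f^n_{\omega_0}$ is injective with image inside $\mathbb B_R$. Writing any $g\in S_\nu$ as $g=f^n_{T^N\omega_0}\circ f^N_{\omega_0}$ produces a uniform trapping neighborhood, so $S_\nu$ is equicontinuous at $0$ and its closure $G=\overline{S_\nu}$ is a compact semigroup in $\mathcal O(\mathbb C^m,0)$.

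\emph{Step 3:} the compact semigroup $G$ has differentials in a compact subgroup of $GL(m,\mathbb C)$ (hence invertible with uniformly bounded inverses), and a standard argument shows $G$ is in fact a compact group. Bochner's averaging trick, using the Haar measure $\mu_G$ on $G$, produces the holomorphic germ
\[
\varphi(z)=\int_G (dg(0))^{-1}g(z)\,d\mu_G(g),
\]
which satisfies $d\varphi(0)=\mathrm{Id}$ and, by right-invariance of $\mu_G$, conjugates every $h\in G$ to its linear part $dh(0)$. In particular every $f\in supp(\nu)\subset G$ is simultaneously linearized by $\varphi$, completing the \emph{only if} direction.

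The main obstacle is Step 2 of the neutral case: lifting the \emph{abstract} boundedness provided by Proposition \ref{vanishingLyap} and the pointwise normality of a single sequence $\mathcal F_{\omega_0}$ to \emph{uniform} control of the full semigroup $S_\nu$ on a common neighborhood of $0$. In one variable this is provided for free by Koebe's theorem; in several variables one has to exploit the output of Step 1, that $df^n_{\omega_0}(0)^{-1}$ is uniformly bounded, as a substitute for the missing distortion theory. The measurability and almost-sure choice of a suitable $\omega_0$ with the required recurrence to every finite word is what Lemma \ref{trappingcor} is expected to supply.
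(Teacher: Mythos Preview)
Your outline matches the paper's proof closely: Proposition \ref{attractingmeasure} handles the attracting case via the same $\varepsilon$-perturbed expectation argument, the repelling case is the one-line Cauchy-estimate observation, and the neutral case follows the chain Proposition \ref{vanishingLyap} $\Rightarrow$ Lemma \ref{trappinglemma}/Corollary \ref{trappingcor} $\Rightarrow$ Lemma \ref{bochyp} $\Rightarrow$ Bochner linearization, with your explicit Haar-average formula for $\varphi$ being precisely the content of the cited Bochner theorem.

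The one place where your Step 2 sketch needs sharpening is the passage from ``every $f^n_{\omega_0}$ is injective on $\mathbb B_{\delta_0}$ with image inside $\mathbb B_\varepsilon$'' to a uniform trapping ball for all of $S_\nu$. The formula $g=f^n_{T^N\omega_0}\circ f^N_{\omega_0}$ is not what you want (that equals $f^{n+N}_{\omega_0}$, not $g$); rather $g\approx f^n_{T^N\omega_0}$, and to conclude $g(\mathbb B_{\rho_0})\subset\mathbb B_\varepsilon$ you need $\mathbb B_{\rho_0}\subset f^N_{\omega_0}(\mathbb B_{\delta_0})$ for \emph{every} $N$, so that $f^n_{T^N\omega_0}(\mathbb B_{\rho_0})\subset f^{n+N}_{\omega_0}(\mathbb B_{\delta_0})\subset\mathbb B_\varepsilon$. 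Injectivity alone does not give this lower bound on the images. The paper's Lemma \ref{trappinglemma} supplies it by contradiction: if $\mathbb B_{1/k}\not\subset f^{n_k}_{\omega_0}(\mathbb B_{\delta_0})$, extract a locally uniform limit $h$; Proposition \ref{vanishingLyap} gives $|\det dh(0)|=1$, so Hurwitz forces $h$ injective rather than degenerate, hence open, which contradicts the assumption. Your ingredient ``$(df^n_{\omega_0}(0))^{-1}$ uniformly bounded'' is exactly what makes the limit non-degenerate, so you have identified the right substitute for Koebe; the Hurwitz step is the mechanism that converts it into the uniform image bound.
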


If we drop the compactness hypothesis on $supp(\nu)$, then this theorem is false in general. We will show a counterexample, for an attracting measure $\nu$, with non compact support.

\begin{example}
Let $0<\lambda_1<1$ and let $a_i=\lambda_1^{-3\cdot 2^i}$. We consider the family
\[
\mathcal A:=\{f_i(z)=\lambda_1z+a_iz^2\}.
\]
Let $\nu$ be the probability measure on $\mathcal O(\mathbb C,0)$ such that $\nu(f_i)=\frac{1}{2^i}$. The support of $\nu$ coincides with $\mathcal A$ and is not compact. On the other hand we have $supp((d_0)_*\nu)=\{\lambda\}$, which implies that the measure $\nu$ is attracting. In this case the (unique) Lyapunov index is $\kappa_1=\log \lambda_1<0$.

Let $\omega=(f_{i_1},f_{i_2},\dots)$ be a sequence of i.i.d. random germs of $\mathcal A$. We write $\lambda_2^{(n)}$ for the coefficient relative to $z^2$ of $f^n_\omega$. It follows that
\[
\lambda_2^{(n)}=\lambda_1^{n}\sum_{k=1}^n\lambda_1^{k-1}a_{i_k}\geq \lambda_1^{2n}a_{i_n}.
\]
Write $p_n$ for the probability that $\lambda_1^{2n}a_{i_n}\geq \lambda_1^{-n}$. The inequality holds if and only if $i_n\geq \log n$, thus
\[
p_n=\sum_{k\geq \log n}\frac{1}{2^k}\sim \frac{2}{n}.
\]
Given $\omega$ and $m>0$, by the independence assumption, the probability that for every $n\geq m$ the inequality $\lambda_1^{2n}a_{i_n}< \lambda_1^{-n}$ holds is equal to
\[
P_m=\prod_{n\geq m}(1-p_n)=\textrm{Exp}\left(\sum_{n\ge m}\log(1-p_n)\right).
\]
Since the general term of the series above is asymptotic to $-2/n$ we see that $\sum_{n\ge m}\log(1-p_n)=-\infty$ and therefore $P_m=0$.
It follows that almost surely there exists $n_k\to\infty$ such that $\lambda_2^{(n_k)}\to\infty$, thus that almost surely $\mathcal F_{\omega}$ is not a normal family.
\end{example}

We conclude this section with the discussion of a particular semi-neutral measure. The example shows that, in the semi-neutral case, $supp(\nu)$ may contains germs which are not linearizable but at the same time the origin can be in the random Fatou set. Furthermore the semigroup $S_\nu$ generated by $supp(\nu)$ may not be bounded in $\mathcal O(\mathbb C^m,0)$. Thus the methods developed in section \ref{section:neutral} for the neutral case, do not work for the semi-neutral one.
\begin{example}
Suppose that $f$ and $g$ are given by
\begin{align*}
f(z,w)&=(z,w/2),\\
g(z,w)&=(z+zw,w).
\end{align*}
The $n$-th iterate of $g$ has the form $g^n(z,w)=(z+nzw+O(|z|^3),w)$. Therefore $\{g^n\}$ is not normal on any neighborhood of the origin, thus it cannot be linearized.

Let $\nu:=\frac{1}{2}\delta_{f}+\frac{1}{2}\delta_{g}$ be a probability measure. A short computation gives $\kappa_1=0$ and $\kappa_2=\frac{1}{2}\log\frac{1}{2}<0$.

Let $\omega\in\Omega$ be a sequence of i.i.d. random germs and write $f^n_\omega=(f_{\omega,1}^n\,,\,f_{\omega,2}^n)$, then
\begin{align*}
|f_{\omega,1}^{n+1}(z,w)|&\le\begin{cases}|f_{\omega,1}^n(z,w)|&\textrm{if }f_n=f;\\
|f_{\omega,1}^n(z,w)|\left(1+|f^n_{\omega,2}(z,w)|\right)&\textrm{if }f_n=g;
\end{cases}\\
|f_{\omega,2}^{n+1}(z,w)|&=\begin{cases}\frac{1}{2}|f_{\omega,2}^n(z,w)|&\textrm{if }f_n=f;\\
|f_{\omega,2}^n(z,w)|&\textrm{if }f_n=g.
\end{cases}
\end{align*}
Let $\alpha(n)=\#\left\{k\in \{1,\dots, n\} \, |\, f_k=f\right\}$. Using the estimates above we find that
\begin{align*}
|f_{\omega,2}^n(z,w)|&=\frac{|w|}{2^{\alpha(n)}},\\
|f_{\omega,1}^n(z,w)|&\le |z|\prod_{i=1}^{n}\left(1+\frac{|w|}{2^{\alpha(i)}}\right).
\end{align*}
It is a well known fact that almost surely we have $\alpha(n)\sim n/2$. It follows that
\[
\log|f_{\omega,1}^n(z,w)|\le\log|z|+ \sum_{i=1}^n \log\left(1+\frac{|w|}{2^{\alpha(i)}}\right).
\]
As $n\to \infty$, almost certainly $ a_i:=\log\left(1+\frac{|w|}{2^{\alpha(i)}}\right)\sim\frac{|w|}{2^{i/2}}$, therefore $\sum_{i=1}^\infty a_i<\infty$  and $f^n_\omega(z,w)$ is uniformly bounded for every $(z,w)\in \mathbb C^2$. We conclude that in this case $\mathcal F_\omega$ is normal for almost every sequence $\omega$.
\end{example}

\section{Proof of Theorem \ref{theorem:maintheorem}}
We start this section with the simpler attracting and repelling cases, and will cover the neutral case afterwards.
\subsection{Attracting and repelling measures.}
\label{section:nonzero}\mbox{}
\medskip

\noindent
If the measure $\nu$ is repelling, then the sequence of the differentials $df^n_\omega(0)$ diverges almost surely. If this is the case, the origin lies in the random Julia set.

The following statement implies the attracting case. It is likely well known, we provide a proof for the sake of completeness.
\begin{proposition}
\label{attractingmeasure}
Let $\nu$ be an attracting measure on $\mathcal O(\mathbb C^m,0)$ with compact support, then the origin lies in the random Fatou set. Furthermore, there exists almost surely a neighborhood of the origin on which all orbits converge to the origin.
\end{proposition}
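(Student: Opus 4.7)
The strategy is to combine uniform Cauchy estimates on the quadratic remainders (available from the compactness of $\mathrm{supp}(\nu)$) with the almost-sure exponential decay of $\|df^n_\omega(0)\|$ coming from $\kappa_1<0$.

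First, I would extract uniform estimates. By Corollary~3.3 (appendix), since $\mathrm{supp}(\nu)$ is compact in the inductive limit topology, there exist $R>0$ and $K>0$ with $\|f\|_{\mathbb B_R}\le K$ for every $f\in\mathrm{supp}(\nu)$. Since each such $f$ fixes the origin, Cauchy's integral formula applied uniformly on $\mathrm{supp}(\nu)$ yields constants $r_0\in(0,R)$ and $C>0$ so that, writing $f(z)=df(0)z+h_f(z)$,
\[
\|h_f(z)\|\le C\|z\|^2, \qquad \forall\,f\in\mathrm{supp}(\nu),\ z\in\mathbb B_{r_0}.
\]

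Next, I would use Oseledec/Furstenberg--Kesten (Theorem~\ref{odeselecthm} and Corollary~\ref{equivalentdefmeas}) applied to the i.i.d.\ linear cocycle $M_\omega=df_\omega(0)$. Because $\kappa_\mu=\kappa_1<0$, for any $\tau\in(0,-\kappa_1)$ there is a measurable $C_\omega<\infty$ with
\[
\|df^n_\omega(0)\|\le C_\omega\,e^{-\tau n}\quad\text{for all } n\ge 0,\ \text{a.s.,}
\]
and moreover $C_\omega$ is tempered along $T$, i.e.\ $\log C_{T^k\omega}=o(k)$ a.s.\ (this is the standard tempered-constant companion to Oseledec; it follows by a Borel--Cantelli argument applied to the stationary distribution of $C_\omega$).

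Now I would carry out a Duhamel/telescoping induction. Setting $A_k:=df_k(0)$ and $G_n:=f^n_\omega(z)$, iterating $G_{n+1}=A_{n+1}G_n+h_{f_{n+1}}(G_n)$ yields
\[
G_n \;=\; df^n_\omega(0)\,z \;+\; \sum_{k=1}^{n}\bigl(A_n A_{n-1}\cdots A_{k+1}\bigr)\,h_{f_k}(G_{k-1}),
\]
and the inner product equals the shifted cocycle $M^{n-k}_{T^k\omega}$, hence is bounded by $C_{T^k\omega}\,e^{-\tau(n-k)}$. Combined with $\|h_{f_k}(G_{k-1})\|\le C\|G_{k-1}\|^2$, one runs an induction with the ansatz $\|G_j\|\le A\,e^{-\alpha j}\|z\|$ for some $0<\alpha<\tau$: the homogeneous term contributes $C_\omega e^{-\tau n}\|z\|$, and the sum is controlled by a geometric series whose convergence is guaranteed because $2\alpha>\tau$ and because $\log C_{T^k\omega}$ is sublinear in $k$. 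Choosing $\|z\|<\delta_\omega$ sufficiently small (depending measurably on $\omega$) closes the induction, and gives
\[
\sup_{\|z\|\le\delta_\omega}\|f^n_\omega(z)\|\;\le\;A_\omega\,e^{-\alpha n},
\]
which proves both statements: the family $\mathcal F_\omega$ is normal on $\mathbb B_{\delta_\omega}$, and every orbit converges to the origin.

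The main obstacle is the third step: individually each $\|df_k(0)\|$ may be larger than $1$, so one cannot close the induction by a naive product bound. The argument genuinely needs the decay rate of the composed differential $\|df^n_\omega(0)\|$, and also its analogue for all shifts $T^k\omega$ appearing in the Duhamel sum. Ensuring the random constants $C_{T^k\omega}$ grow only subexponentially in $k$ is the one point that requires the tempered/Pesin-block version of Oseledec; everything else is a bookkeeping computation built on the uniform quadratic estimate from compactness of $\mathrm{supp}(\nu)$.
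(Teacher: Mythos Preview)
Your argument is correct, but it takes a considerably more technical route than the paper's. The paper exploits the i.i.d.\ structure to reduce the problem to a \emph{scalar} random product, bypassing the Duhamel formula and the tempered-constant machinery entirely. Concretely: from $\kappa_\mu<0$ there is $n_0$ with $\E\log\|df^{n_0}_\omega(0)\|<0$ (and WLOG $n_0=1$); a short monotone-convergence argument then produces $\varepsilon>0$ with $\E\log(\|df(0)\|+\varepsilon)<0$. Setting $\alpha_f:=\|df(0)\|+\varepsilon$, compactness of $\mathrm{supp}(\nu)$ yields a single radius $r>0$ on which $\|f(z)\|\le\alpha_f\|z\|$ for every $f\in\mathrm{supp}(\nu)$. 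Now the strong law of large numbers for the i.i.d.\ scalars $\log\alpha_{f_{T^k\omega}}$ gives $\alpha^n_\omega:=\prod_{k=0}^{n-1}\alpha_{f_{T^k\omega}}\to 0$ a.s., and a one-line induction yields $\|f^n_\omega(z)\|\le\alpha^n_\omega\|z\|$ for $\|z\|<\delta_\omega$. No Oseledec filtration, no control of the shifted cocycles $\|M^{n-k}_{T^k\omega}\|$, and no temperedness are needed.

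Your approach is the standard Pesin-style argument and has its own advantages: it applies verbatim to a general ergodic (non-i.i.d.) cocycle with negative top exponent, where the paper's ``pass to $n_0$-blocks'' reduction is unavailable, and it delivers an explicit contraction rate $e^{-\alpha n}$ with $\alpha$ arbitrarily close to $|\kappa_1|$, whereas the paper's rate is tied to $\E\log(\|df(0)\|+\varepsilon)$ and is generally weaker. The cost is the temperedness of $C_{T^k\omega}$: this is indeed standard, but your parenthetical justification (``Borel--Cantelli on the stationary distribution of $C_\omega$'') is not quite sufficient on its own---finiteness a.s.\ of a stationary sequence does not force sublinear growth. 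One needs either $\log C_\omega\in L^1$ together with the Birkhoff-difference argument, or the Lyapunov-norm construction from Pesin theory; you should make that step explicit.
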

\begin{proof}
By Lemma \ref{lyapexpandind} and \eqref{Furstthm}, there exists $n_0>0$ such that $\E \log \Vert df^{n_0}_\omega(0) \Vert<0$. Without loss of generality we may assume that $n_0=1$.

Given $\delta>0$ we write
\begin{align*}
\E\log\Vert df(0)\Vert&=\int_{\Vert df(0)\Vert\le\delta}\log\Vert df(0)\Vert d\nu+\int_{\Vert df(0)\Vert>\delta}\log\Vert df(0)\Vert d\nu\\
&=L_\delta+U_\delta.
\end{align*}
Suppose that $U_\delta<0$ for some $\delta>0$. In this case we can find $\varepsilon>0$ sufficiently small so that
\begin{equation}
\label{epsilonin}
\E\log\left(\Vert df(0)\Vert+\varepsilon\right)<0.
\end{equation}
On the other hand, suppose that $U_\delta\ge 0$ for every $\delta>0$. Let $\delta_n$ be a decreasing sequence of positive numbers converging to $0$ and such that $\delta_0>\sup_{f\in supp(\nu)}\Vert df(0)\Vert$. It then follows that
\begin{align*}
\int_{ df(0)\neq 0}\log \Vert df(0)\Vert d\nu&=\sum_{n=1}^{\infty}\int_{\delta_n<\Vert df(0)\Vert \le\delta_{n-1}}\log\Vert df(0)\Vert d\nu\\
&=\lim_{N\to\infty}U_{\delta_N}\\
&\ge 0.
\end{align*}
Since $\E\log\Vert df(0)\Vert<0$ we must have $M_0:=\nu\{ df(0)=0\}>0$. We conclude that
\begin{align*}
\E\log\left(\Vert df(0)\Vert+\varepsilon\right)&= M_0\log\varepsilon+\int_{\Vert df(0)\Vert\neq 0}(\log\left\Vert df(0)\Vert+\varepsilon\right) d\mu(f)\\
&\le M_0\log\varepsilon+\sup_{f\in supp(\mu)}\log \left(\Vert df(0)\Vert+\varepsilon\right).
\end{align*}
Therefore also in this second case we can choose $\varepsilon>0$ sufficiently small so that \eqref{epsilonin} is satisfied.

Let $\alpha_f:=\Vert df(0)\Vert+\varepsilon$. By the compactness of $supp(\nu)$ there exists $0<r<R$ such that every $f\in supp(\nu)$ is holomorphic on $\mathbb B_R$ and
\[
\Vert f(z)\Vert\le \alpha_f\Vert z\Vert,\qquad \forall z\in \mathbb B_r.
\]
Since $\E\log\alpha_f<0$, then for almost every sequence $\omega\in \Omega$ we have $\alpha_\omega^n:=\alpha_{f_{T^{n-1}\omega}}\cdots \alpha_{f_\omega}\to 0$. In particular there exists $0<\delta\le r$, which depends on $\omega$, such that if $\Vert z\Vert <\delta$ then $\alpha_\omega^n|z|<r$ for every $n>0$.

Let $\Vert z\Vert<\delta$, a simple induction argument shows that
\[
\Vert f^n_\omega(z)\Vert\le\alpha^n_\omega\Vert z\Vert<r,\qquad \forall n\in\mathbb N.
\]

This shows that $\mathbb B_\delta\subset D_\omega$ and that the family $\mathcal F_\omega$ is normal at the origin. This is true for almost every $\omega\in\Omega$, therefore the origin lies in the random Fatou set. Furthermore since $\alpha^n_\omega\to 0$ as $n\to\infty$, the orbits of all points in $\mathbb B_\delta$ converge to the origin.
\end{proof}

\subsection{Neutral measures}
\label{section:neutral}\mbox{}
\medskip

\noindent
The following lemma is proved along the lines of Lemma \ref{fullmeasure}.
\begin{lemma}
Let $\nu$ be a probability measure on $\mathcal O(\mathbb C^m,0)$ with compact support. Then the set
\[
\Omega_{a}:=\left\{\omega\in \Omega\,\Big|\,\forall n\in\mathbb N \textrm{ and }\forall\alpha\in \Omega,\,\exists k_j\,:\, f^n_{T^{k_j}\omega}\to f^n_\alpha\right\}
\]
is a full measure subset of $\Omega$.
\end{lemma}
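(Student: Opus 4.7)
The proof follows the same structure as that of Lemma \ref{fullmeasure}; the only new subtlety is that the target space $\mathcal O(\mathbb C^m,0)$ is no longer a normed space. The plan is to reduce to a metric setting by exploiting compactness, and then to rerun the combinatorial argument verbatim.

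Fix $n\in\mathbb N$. By compactness of $supp(\nu)$ and Corollary \ref{compactset3}, every $f\in supp(\nu)$ extends holomorphically and uniformly boundedly to a common ball $\overline{\mathbb B_R}$. A standard Cauchy-estimate argument then produces a radius $r=r_n\in(0,R]$ such that every composition $f^n_\omega$, $\omega\in\Omega$, is defined on $\overline{\mathbb B_r}$ and the family $\{f^n_\omega:\omega\in\Omega\}$ is uniformly bounded there. Consequently this family lies in a relatively compact subset of $\mathcal O(\mathbb C^m,0)$, on which the inductive limit topology agrees with the topology of uniform convergence on $\overline{\mathbb B_r}$ (see the appendix).

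Picking $\varepsilon_j\downarrow 0$ and setting
\[
\Omega_{n,j}:=\bigl\{\omega\in\Omega:\forall\,\alpha\in\Omega,\ \exists\,k\text{ with }\|f^n_{T^k\omega}-f^n_\alpha\|_{\overline{\mathbb B_r}}<\varepsilon_j\bigr\},
\]
the reduction above yields $\bigcap_{n,j}\Omega_{n,j}=\Omega_a$, so it suffices to prove each $\Omega_{n,j}$ has full measure. Continuity of composition together with continuity of the shift $T$ makes $\omega\mapsto f^n_\omega$ a continuous map from $\Omega$ into $\mathcal O(\mathbb C^m,0)$; hence for each $\alpha\in\Omega$ one finds an open neighbourhood $U_\alpha\ni\alpha$ with $\|f^n_\alpha-f^n_\beta\|_{\overline{\mathbb B_r}}<\varepsilon_j/2$ for all $\beta\in U_\alpha$. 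By compactness of $\Omega$, finitely many such neighbourhoods $U_1,\ldots,U_N$ cover $\Omega$; each has positive $\mu$-measure since $supp(\mu)=\Omega$, and the ergodicity of $T$ guarantees that the set of $\omega$ whose orbit visits every $U_i$ has full measure. A single application of the triangle inequality then places such $\omega$ in $\Omega_{n,j}$, exactly as in Lemma \ref{fullmeasure}.

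The only genuinely new point compared with the matrix case is this initial reduction from the non-metric inductive limit topology to the Banach norm on bounded holomorphic functions on $\overline{\mathbb B_r}$; once that is in place, the remainder is a transcription of the earlier argument, and I do not expect any serious obstacle.
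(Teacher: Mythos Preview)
Your proposal is correct and follows essentially the same approach as the paper: reduce to a metric setting by showing that $\{f^n_\omega:\omega\in\Omega\}$ sits inside some $X_N$, then rerun the argument of Lemma~\ref{fullmeasure} with the metric $d_N$ in place of the matrix norm. The only cosmetic difference is that the paper obtains this reduction by invoking continuity of $\omega\mapsto f^n_\omega$ and compactness of $\Omega$ to conclude the image is compact (hence contained in some $X_{N_n}$ by Corollary~\ref{compactset3}), whereas you build the common radius $r_n$ directly via Cauchy estimates; these are equivalent and neither buys anything substantial over the other.
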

\begin{proof}
Unlike the standard topology of $Mat(m,\mathbb C)$, the inductive limit topology is not metrizable. However, given $n\in\mathbb N$, the image of $\Omega$ under the continuous map $\omega\mapsto f^n_\omega$ is a compact set in the inductive limit topology. By Corollary \ref{compactset3}, we can find  a natural number $N_n$ so that, for every $\omega\in\Omega$, the germ $f^n_\omega$ belongs to $X_{N_n}$.

We choose a sequence $\varepsilon_j\to 0$ and we define
\[
\Omega_{n,j}=\{\omega\in\Omega\,|\,\forall \alpha\in \Omega,\,\exists k\,:\, d_{N_n}(f^n_\omega,f^n_\alpha)<\varepsilon_j\}.
\]
By Theorem \ref{convergentsequences}, we have $\Omega_a=\cup_{n,j}\Omega_{n,j}$. From this point on we follows the proof of Lemma \ref{fullmeasure}, replacing the standard metric of $Mat(m,\mathbb C)$ with the metric $d_{N_n}$.
\end{proof}

\begin{theorem}[Hurwitz Theorem]
Suppose that $(f_n)_n$ is a sequence of injective holomorphic maps on a domain $U\subset \mathbb C^m$ converging uniformly on compact subsets to a function $g$. Then $g$ is either injective or degenerate.
\end{theorem}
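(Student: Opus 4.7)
The plan is to assume $g$ is not degenerate, meaning $\det dg \not\equiv 0$, and show that $g$ must then be injective. Since $f_n \to g$ locally uniformly and these maps are holomorphic, the Cauchy estimates give $df_n \to dg$ (and hence $\det df_n \to \det dg$) locally uniformly. The zero set $Z := \{z \in U : \det dg(z) = 0\}$ is a proper analytic subvariety and in particular has empty interior, so $g$ is a local biholomorphism on the dense open set $U \setminus Z$.

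First I would assume for contradiction that $g(p_0) = g(q_0) = v_0$ with $p_0 \neq q_0$, and choose disjoint closed balls $\overline{B_p} \ni p_0$ and $\overline{B_q} \ni q_0$ inside $U$. The holomorphic map $g$ is open (this is the multivariable open mapping theorem for non-degenerate holomorphic maps, which follows from discreteness of the fibers outside $Z$ together with a local degree computation), so $g(B_p)$ and $g(B_q)$ are open neighborhoods of $v_0$ and $W := g(B_p) \cap g(B_q)$ is a nonempty open set. Because $Z$ is an analytic set of positive codimension, the image $g(Z \cap B_p) \cup g(Z \cap B_q)$ has Lebesgue measure zero in $\mathbb{C}^m$, so one can pick $v_1 \in W$ that lies in neither. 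Then $v_1 = g(p_1) = g(q_1)$ for some $p_1 \in B_p \setminus Z$ and $q_1 \in B_q \setminus Z$, and $dg$ is invertible at both points; the inverse function theorem produces small disjoint balls $U_1 \ni p_1$ and $U_2 \ni q_1$ on which $g$ is biholomorphic onto open neighborhoods $V_1, V_2$ of $v_1$.

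Finally I would transfer this picture to $f_n$. Since $df_n \to dg$ uniformly on $\overline{U_1} \cup \overline{U_2}$, for large $n$ the differential $df_n$ is invertible there, and a quantitative inverse function theorem (or Rouché/degree argument) shows that $f_n$ is injective on slightly smaller balls $U_i' \subset U_i$ whose images $f_n(U_i')$ contain a common neighborhood of $v_1$ of uniform size. Picking any point $w$ in the intersection of these two images then yields distinct preimages $x_n \in U_1' \subset B_p$ and $y_n \in U_2' \subset B_q$ of $w$ under $f_n$, contradicting injectivity. The main technical obstacle is the multivariable open mapping theorem and the passage from $C^0$-convergence of $f_n \to g$ to a uniform inverse function theorem for $f_n$ near $p_1$ and $q_1$; the cleanest alternative is to invoke the topological invariance of the local intersection number of $f_n - w$ at $p_1$ and $q_1$ under small perturbations of $f_n$, which makes the counting argument formal without needing openness of $g$.
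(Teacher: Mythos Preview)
The paper states Hurwitz's theorem as a classical result and gives no proof, so there is nothing in the paper to compare against. Your overall strategy---a Rouch\'e/degree argument producing two distinct preimages of a common value under $f_n$---is the standard one, but there is a genuine gap. You assert that a holomorphic $g:U\to\mathbb C^m$ with $\det dg\not\equiv 0$ is open; this is false in general. For $g(z,w)=(z,zw)$ one has $\det dg=z\not\equiv 0$, yet the image of any ball centred at the origin omits $\{0\}\times(\mathbb C\setminus\{0\})$, so $g$ is not open at $0$. Your construction of the perturbed points $p_1,q_1$ hinges on $g(B_p)\cap g(B_q)$ being an open neighbourhood of $v_0$, so it is unjustified, and the ``cleanest alternative'' you sketch at the end does not escape the problem since it still takes $p_1,q_1$ as input.

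The missing step is short and should come first. Each $f_n$ is injective holomorphic, hence has nowhere-vanishing Jacobian (an injective holomorphic map between equidimensional domains is biholomorphic onto its image). Thus $\det df_n\to\det dg$ is a locally uniform limit of zero-free scalar holomorphic functions on $U$; applying the one-variable Hurwitz theorem on a complex line through any putative zero of $\det dg$ shows that $\det dg$ is either identically zero or nowhere zero. In the latter case $Z=\emptyset$, $g$ is everywhere a local biholomorphism, and the degree argument runs directly at $p_0$ and $q_0$ without any perturbation: the map $h_n(z):=f_n(z)-f_n(q_0)$ is zero-free on a small ball $B_p\not\ni q_0$ by injectivity of $f_n$, whereas its uniform limit $g(\cdot)-g(q_0)$ has an isolated zero of local degree one at $p_0$, forcing $h_n$ to vanish in $B_p$ for all large $n$.
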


Given a probability measure $\nu$ with compact support, choose $R>0$ and define $D_\omega$ as explained in section \ref{section:generalprob}.
\begin{lemma}
\label{trappinglemma}
Let $\nu$ be a neutral measure with compact support. The origin lies in the random Fatou set if and only if given $\varepsilon>0$ there exists $\rho>0$ such that, for every $\omega\in \Omega$, we have $\mathbb B_{\rho}\subset D_\omega$ and
\begin{equation}
\label{uniformestimate}
f^n_\omega(\mathbb B_\rho)\subset \mathbb B_\varepsilon,\qquad \forall n\in\mathbb N.
\end{equation}
\end{lemma}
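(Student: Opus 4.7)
The ($\Leftarrow$) direction is immediate: uniform trapping on $\mathbb B_\rho$ for \emph{every} $\omega$ makes $\mathcal F_\omega$ uniformly bounded on $\mathbb B_\rho$, so by weak Montel the origin is in the Fatou set of every sequence, hence of almost every sequence. The content is in the converse, for which the plan has four steps.

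First I would pass to the linear part. The pushforward $(d_0)_*\nu$ is a neutral measure of matrices with compact support, and the assumption that the origin lies in the random Fatou set forces $\{df^n_\omega(0)\}$ to be almost surely bounded. Proposition \ref{vanishingLyap} then gives that $dS_\nu$ is conjugate to a sub-semigroup of $U(m)$, so after a linear change of coordinates we may assume $df^n_\omega(0)\in U(m)$ for every $\omega\in\Omega$ and every $n$. Next, I would pick a reference sequence $\omega_0\in\Omega_a$ (full measure set from the preceding lemma) for which $\mathcal F_{\omega_0}$ is normal; this yields $\delta_0>0$ with $f^n_{\omega_0}(\mathbb B_{\delta_0})\subset\mathbb B_{\varepsilon/2}$ for all $n$. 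Since $supp(\nu)$ is compact and every $f\in supp(\nu)$ has nondegenerate differential at $0$, there is a uniform univalence radius $r_0$; shrinking $\delta_0$ so that $\varepsilon/2\le r_0$ and arguing inductively on $n$ (using $f^{n+1}_{\omega_0}=f_{n+1}\circ f^n_{\omega_0}$) shows that each $f^n_{\omega_0}$ is univalent on $\mathbb B_{\delta_0}$.

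The crucial step is to obtain uniform \emph{lower} bounds on $f^n_{\omega_0}(\mathbb B_\rho)$: I would write $f^n_{\omega_0}(z)=df^n_{\omega_0}(0)z+R_n(z)$. Because $f^n_{\omega_0}$ maps $\mathbb B_{\delta_0}$ into $\mathbb B_{\varepsilon/2}$, Cauchy estimates bound the second derivatives of $f^n_{\omega_0}$ at the origin uniformly in $n$, so $\|R_n(z)\|\le C\|z\|^2$ on $\mathbb B_{\delta_0/2}$ with $C$ independent of $n$. Combined with $\|df^n_{\omega_0}(0)z\|=\|z\|$ (unitarity), this yields some $\rho_0>0$ independent of $n$ with $\|f^n_{\omega_0}(z)\|\ge \|z\|/2$ on $\mathbb B_{\rho_0}$. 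A standard degree/Rouché argument (homotoping $f^n_{\omega_0}$ to its unitary linear part) then shows $f^n_{\omega_0}(\mathbb B_{\rho_0})\supset \mathbb B_{\rho_0/2}$, so by univalence $(f^n_{\omega_0})^{-1}$ is defined on $\mathbb B_{\rho_0/2}$ with image in $\mathbb B_{\rho_0}\subset\mathbb B_{\delta_0}$. This is the multidimensional replacement for the use of Koebe's quarter theorem in the one--dimensional argument of the introduction, and is where I expect the main technical work to sit.

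Finally I would transfer the estimate to an arbitrary $\omega\in\Omega$. For fixed $n$ and $\omega$, the defining property of $\Omega_a$ provides $k_j$ with $f^n_{T^{k_j}\omega_0}\to f^n_\omega$ in the inductive limit topology. Using the identity
\[
f^n_{T^{k_j}\omega_0}=f^{n+k_j}_{\omega_0}\circ(f^{k_j}_{\omega_0})^{-1}
\]
valid on $\mathbb B_{\rho_0/2}$ by the previous step, one gets $f^n_{T^{k_j}\omega_0}(\mathbb B_{\rho_0/2})\subset f^{n+k_j}_{\omega_0}(\mathbb B_{\rho_0})\subset\mathbb B_{\varepsilon/2}$. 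Passing to the limit (uniform convergence on compact subsets of a small neighborhood of $0$) gives $f^n_\omega(\mathbb B_{\rho_0/2})\subset\overline{\mathbb B_{\varepsilon/2}}\subset\mathbb B_\varepsilon$ for every $\omega$ and $n$. Setting $\rho=\rho_0/2$ yields the required trapping, and the inclusion $\mathbb B_\rho\subset D_\omega$ follows at once (assuming $\varepsilon\le R$).
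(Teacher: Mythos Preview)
Your proof is correct and follows the same architecture as the paper's: choose a reference $\omega_0\in\Omega_a\cap\Omega_{nor}$, obtain $f^n_{\omega_0}(\mathbb B_{\delta_0})\subset\mathbb B_\varepsilon$ and univalence on $\mathbb B_{\delta_0}$, establish a uniform inclusion $\mathbb B_{\rho_0}\subset f^n_{\omega_0}(\mathbb B_{\delta_0})$, and transfer to arbitrary $\omega$ via the approximation property of $\Omega_a$. The one substantive difference is the lower-bound step. You first conjugate so that every $df^n_\omega(0)$ is unitary, then use Cauchy estimates on the quadratic remainder together with a Rouch\'e/degree argument to get the quantitative inclusion $f^n_{\omega_0}(\mathbb B_{\rho_0})\supset\mathbb B_{\rho_0/2}$. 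The paper instead argues by contradiction with no coordinate change: if no uniform $\rho_0$ existed, a subsequence $f^{n_k}_{\omega_0}$ would converge to some $g$ with $|\det dg(0)|=1$ (using only that neutrality plus almost-sure boundedness force $|\det|\equiv 1$ on $dS_\nu$), and Hurwitz's theorem then makes $g$ injective, hence open---contradicting the shrinking images. Your route is more constructive and yields explicit constants; the paper's is shorter and uses less of Proposition~\ref{vanishingLyap}. One small gap to close in your final step: $\tau_{ind}$-convergence of $f^n_{T^{k_j}\omega_0}\to f^n_\omega$ a priori only gives uniform convergence on some unspecified small ball, so (as the paper does) you should apply Montel on $\mathbb B_{\rho_0/2}$ and the identity principle to promote this to locally uniform convergence on all of $\mathbb B_{\rho_0/2}$ before concluding $f^n_\omega(\mathbb B_\rho)\subset\mathbb B_\varepsilon$.
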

\begin{proof}
Suppose that for every $\varepsilon>0$ there exists such $\rho$, then by the weak Montel Theorem the origin lies in the random Fatou set.

Suppose on the other hand that the origin lies in the random Fatou set and let $\omega_0\in\Omega_a\cap\Omega_{nor}$.
By Ascoli--Arzel\'a theorem, given  $\varepsilon>0$ there exists $\delta_0>0$  such that $\mathbb B_{\delta_0}\subset D_{\omega_0}$ and
\[
 f^n_{\omega_0}(\mathbb B_{\delta_0})\subset \mathbb B_\varepsilon,\qquad\forall n\in\mathbb N.
\]
By the compactness of $supp(\nu)$, by choosing $\varepsilon>0$ small enough, we may assume that every $f\in supp(\nu)$ is holomorphic and injective on $\mathbb B_{\varepsilon}$. Therefore $f^n_{\omega_0}:\mathbb B_{\delta_0}\rightarrow \mathbb B_\varepsilon$ is injective for all values of $n$.

Suppose that there exists $n_k$ so that
\begin{equation}
\label{contradictionform}
\mathbb B_{1/k}\not\subset f^{n_k}_{\omega_0}(\mathbb B_{\delta_0}).
\end{equation}
By taking a subsequence if necessary, we may assume that $f^{n_k}_{\omega_0}\to g$ uniformly on compact subsets of $\mathbb B_{\delta_0}$. By Proposition \ref{vanishingLyap} we have $|\det(df^{n_k}_{\omega_0}(0))|=1$ for every $k$, thus $|\det(dg(0))|=1$. Thanks to Hurwitz Theorem we can conclude that $g$ is injective on $\mathbb B_{\delta_0}$.

Since $g$ is an open function, we can choose $\rho^\prime>0$ such that $\mathbb B_{\rho^\prime}\subset g(\mathbb B_{\delta_0})$ and thus such that $\mathbb B_{\rho^\prime}\subset f^{n_k}_{\omega_0}(\mathbb B_{\delta_0})$ for $k$ big enough, which contradicts \eqref{contradictionform}.

%The function $g$ is an open function, therefore given $\delta_1<\delta_0$ there exists $\rho>0$ such that $B_\rho(0)\subset\subset g(B_{\delta_1}(0))$. By univalence\marginpar{should we write this?}, the image $S_\infty=g(\partial B_{\delta_1}(0))$ is a sphere. Now for $k$ big enough $S_k:=f^{n_k}_{\omega_0}(B_{\delta_1}(0))$ is also a sphere which lies very close to $S_\infty$. It follows that, for large values of $k$, we have $B_{1/k}(0)\subset f(n_k)(B_\delta(0))$ which contradict \eqref{contradictionform}.

It follows that there exists $\rho_0>0$ so that $ \mathbb B_{\rho_0}\subset f^n_{\omega_0}(\mathbb B_{\delta_0})$ for every $n$. By the invariance relation \eqref{invariancerel}, given $n,k\in\mathbb N$, we find that $\mathbb B_{\rho_0}\subset  D_{T^k\omega_0}$ and
\begin{align*}
f^n_{T^k\omega_0}(\mathbb B_{\rho_0})&\subset f^n_{T^k\omega_0}\circ f^k_{\omega_0}(\mathbb B_{\delta_0})\\
&\subset\mathbb B_\varepsilon.
\end{align*}

Let $\rho<\rho_0$ be fixed. Given $\omega\in\Omega$ and $n\in\mathbb N$, there exists a sequence $k_j$ so that $f^n_{T^{k_j}\omega_0}$ converges to $f^n_\omega$ in the inductive limit topology. On the other hand, by the weak Montel Theorem, we may also assume that $f^n_{T^{k_j}\omega_0}$ converges to some $g$, uniformly on compact subsets of $\mathbb B_{\rho_0}$. By Theorem \ref{convergentsequences} the maps $g$ and $f^n_\omega$ agree on a small ball containing the origin, therefore, by the identity principle, they coincide as germs. We conclude that $f^n_\omega$ is holomorphic on $\mathbb B_\rho$ and, since $\rho$ is independent from $n$ and $\omega$, that $\mathbb B_\rho\subset D_\omega$ for every $\omega\in\Omega$. Finally uniform convergence on compact subsets of $\mathbb B_{\rho_0}$ implies \eqref{uniformestimate}.
% Given any $\omega\in \Omega$ and $n\in\mathbb N$ we know that there exists $k_j$ such that $f^{n}_{T^{k_j}\omega_0}\to f^n_\omega$. By Theorem \ref{convergentsequences}, there exists $N$ such that $(f^n_{T^{k_j}\omega_0})\subset X_N$ and
%\[
%\lim_{j\to\infty}d_N\left(f^n_{T^{k_j}\omega_0},f^n_\omega\right)=\lim_{j\to\infty}\sup_{\Vert z\Vert <1/N}\big\Vert f^n_{T^{k_j}\omega_0}-f^n_\omega\big\Vert=0.
%\]
%If $\rho\le 1/N$, we find immediately that $f^n_\omega(B_\rho(0))\subset B_\varepsilon(0)$. Suppose instead that $\rho>1/N$. By Montel theorem we know that $f^{n}_{T^{k_j}\omega_0}$ is a normal family. By taking a subsequence of $k_j$ if necessary, we may suppose that $f^n_{T^{k_j}\omega_0}$ converges uniformly on $B_\rho(0)$. But if this is the case, by the identity principle the only possibility is that this function is exactly $f^n_\omega$. This concludes the proof of the lemma.
\end{proof}

We write $S_\nu$ for the semigroup
\[
S_\nu:=\left\{f^n_\omega\,|\,\omega\in \Omega,\,n\in\mathbb N\right\}.
\]
The following corollary is an immediate consequence of the previous lemma.
\begin{corollary}
\label{trappingcor}
Let $\nu$ be a neutral measure with compact support. Then the origin lies in the random Fatou set if and only if  $S_\nu$ is relatively compact in $\mathcal O(\mathbb C^m,0)$.
\end{corollary}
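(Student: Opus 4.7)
The plan is to deduce the corollary directly from Lemma \ref{trappinglemma} together with the characterization of relative compactness in $\mathcal O(\mathbb C^m,0)$ used earlier in the paper (Corollary \ref{compactset3} of the appendix): a subset $S\subset\mathcal O(\mathbb C^m,0)$ is relatively compact in the inductive limit topology if and only if there exist $n\in\mathbb N$ and $r>0$ with $S\subset \overline B_n(0,r)$, i.e.\ every germ of $S$ has a representative holomorphic on $\mathbb B_{1/n}$ with sup norm at most $r$.

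For the forward implication I would fix $\varepsilon>0$ small enough that every $f\in supp(\nu)$ is holomorphic on $\mathbb B_\varepsilon$ (possible by compactness of $supp(\nu)$) and apply Lemma \ref{trappinglemma} to obtain a uniform $\rho>0$ such that $\mathbb B_\rho\subset D_\omega$ and $f^n_\omega(\mathbb B_\rho)\subset \mathbb B_\varepsilon$ for every $\omega\in\Omega$ and $n\in\mathbb N$. Picking $N\in\mathbb N$ with $1/N\le\rho$, every element of $S_\nu$ then admits a representative holomorphic on $\mathbb B_{1/N}$ with sup norm bounded by $\varepsilon$, so that $S_\nu\subset \overline B_N(0,\varepsilon)$, yielding relative compactness.

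For the reverse implication suppose $S_\nu\subset \overline B_N(0,M)$ for some $N\in\mathbb N$ and $M>0$. Every $f^n_\omega$ is then holomorphic on $\mathbb B_{1/N}$ with sup norm at most $M$, and fixes the origin. Slicing along complex lines and applying the one-variable Schwarz lemma component-wise gives a uniform linear estimate $\Vert f^n_\omega(z)\Vert\le \sqrt{m}\,MN\,\Vert z\Vert$ on $\mathbb B_{1/N}$, valid for all $\omega$ and $n$. For any prescribed $\varepsilon>0$ one then chooses $\rho:=\min\bigl(1/N,\,\varepsilon/(\sqrt{m}MN)\bigr)$, which gives $\mathbb B_\rho\subset D_\omega$ and $f^n_\omega(\mathbb B_\rho)\subset \mathbb B_\varepsilon$ for all $\omega$ and $n$; Lemma \ref{trappinglemma} then places the origin in the random Fatou set.

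There is no real obstacle: the substantive work has already been done in Lemma \ref{trappinglemma} and in the appendix's description of compact subsets of the inductive limit topology. The only point that deserves a brief check is the uniform Schwarz estimate in the reverse direction, which is a routine consequence of reducing to the one-dimensional Schwarz lemma on each coordinate.
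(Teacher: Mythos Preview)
Your argument is correct and follows the paper's intended route: the corollary is stated there as an ``immediate consequence'' of Lemma \ref{trappinglemma}, and you have simply spelled out the two implications using that lemma together with the characterization of relative compactness from Corollary \ref{compactset3}. One small remark: the reverse direction can be shortened further, since relative compactness of $S_\nu$ in $\mathcal O(\mathbb C^m,0)$ already means that each family $\mathcal F_\omega\subset S_\nu$ is relatively compact, hence normal near the origin, so $\omega\in\Omega_{nor}$ for every $\omega$; the Schwarz-lemma estimate is not needed, though it is of course valid.
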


\begin{definition}
Given $f\in \mathcal O(\mathbb C^m,0)$ we say that $f$ is \emph{linearizable}, if there exists $\varphi\in \mathcal O(\mathbb C^m,0)$, locally invertible at the origin, such that
\[
\varphi\circ f(z)=df(0)\cdot \varphi.
\]
If this is the case, we say that $f$ is \emph{linearized} by $\varphi$.
\end{definition}
\begin{corollary}
\label{finitecomplinear}
Let $\nu$ be a neutral measure with compact support. If the origin lies in the random Fatou set, then every element of $S_\nu$ is linearizable.
\end{corollary}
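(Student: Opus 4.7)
The plan is to run a Cesàro averaging argument, which is the classical device for producing a linearization once the iterates of $f$ form a relatively compact family with unitary linear part. Fix $f \in S_\nu$. By Corollary \ref{trappingcor} the semigroup $S_\nu$ is relatively compact in $\mathcal{O}(\mathbb{C}^m, 0)$, so in particular $\{f^n\}_{n \geq 0} \subset S_\nu$; invoking Corollary \ref{compactset3} I fix $R > 0$ on which every iterate $f^n$ is holomorphic with $\sup_n \sup_{\mathbb{B}_R}\|f^n\| < \infty$. Applying Proposition \ref{vanishingLyap} to the pushforward measure $(d_0)_*\nu$ on $Mat(m,\mathbb{C})$, I obtain a single matrix $H \in GL(m,\mathbb{C})$ with $H^{-1}\,dS_\nu\,H \subset U(m)$. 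After the linear change of coordinates $z = H\tilde z$ in $\mathbb{C}^m$, which preserves both the semigroup structure of $S_\nu$ and linearizability of its elements, I may assume that $A := df(0)$ is unitary.

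Next I introduce the Cesàro averages
\[
\varphi_N(z) := \frac{1}{N}\sum_{n=0}^{N-1} A^{-n}\, f^n(z),
\]
which satisfy $\varphi_N(0) = 0$ and $d\varphi_N(0) = I$. Since $A^{-n}$ is an isometry of $\mathbb{C}^m$, each composition $A^{-n}\circ f^n$ is bounded on $\mathbb{B}_R$ by the same constant as $f^n$, hence the family $\{\varphi_N\}$ is uniformly bounded on $\mathbb{B}_R$. A direct telescoping computation yields the key identity
\[
A^{-1}\varphi_N\circ f(z) - \varphi_N(z) = \frac{1}{N}\bigl(A^{-N} f^N(z) - z\bigr),
\]
whose right-hand side tends to $0$ uniformly on $\mathbb{B}_R$ as $N \to \infty$, again because $\{A^{-N} f^N\}$ is uniformly bounded there.

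By Montel's theorem I extract a subsequence $\varphi_{N_k}$ converging uniformly on compact subsets of $\mathbb{B}_R$ to a holomorphic map $\varphi$ with $\varphi(0) = 0$ and $d\varphi(0) = I$. Passing to the limit in the displayed identity yields $\varphi \circ f = A\,\varphi$, and the invertibility of $d\varphi(0)$ makes $\varphi$ a local biholomorphism at the origin that linearizes $f$. Since $f$ was an arbitrary element of $S_\nu$, this proves the corollary.

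The main obstacle is ensuring relative compactness of the Cesàro averages in a fixed neighborhood of the origin, and this is exactly what the conjugation to $U(m)$ buys us: without unitarity of $A$ the operator norms $\|A^{-n}\|$ would blow up, so neither the uniform bound on $\{\varphi_N\}$ nor the vanishing of the remainder term could be extracted from mere boundedness of $\{f^n\}$. The fact that Proposition \ref{vanishingLyap} supplies a \emph{single} conjugating matrix $H$ valid for the entire semigroup $dS_\nu$ is what allows the same argument to apply verbatim to every element of $S_\nu$, rather than only to one chosen generator.
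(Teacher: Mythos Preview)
Your argument is correct, but it takes a more hands-on route than the paper. The paper's proof is essentially one line: since $S_\nu$ is relatively compact in $\mathcal O(\mathbb C^m,0)$ by Corollary~\ref{trappingcor}, the sub-family $\{f^n\}_{n\ge 0}\subset S_\nu$ is normal near the origin for any fixed $f\in S_\nu$; invoking the classical deterministic statement quoted in the introduction (a neutral germ with normal iterates is linearizable with diagonalizable differential) finishes the proof immediately. You instead unpack that classical statement and reprove it by an explicit Ces\`aro averaging, using Proposition~\ref{vanishingLyap} to arrange $A=df(0)\in U(m)$ so that $A^{-n}$ stays isometric and the averages remain bounded. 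This buys a self-contained construction of the linearizing map $\varphi$ and makes transparent exactly where the unitary conjugation is used, at the cost of redoing a known result. One small technicality: to pass from the telescoping identity to $\varphi\circ f=A\varphi$ in the limit you implicitly need $f(z)$ to lie in the domain where $\varphi_{N_k}\to\varphi$; this is harmless (just restrict to a smaller ball $\mathbb B_r$ with $f(\mathbb B_r)\subset\mathbb B_R$, or invoke Lemma~\ref{trappinglemma}), but it is worth stating explicitly.
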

\begin{proof}
If the semigroup $S_\nu$ is relatively compact in $\mathcal O(\mathbb C^m,0)$ then, given $f\in S_\nu$, the family $\{f^n\}$ is also relatively compact. It follows that the origin lies in the Fatou set of $f$, and therefore that the germ is linearizable.
\end{proof}

We notice that linearizability of every element of $S_\nu$ does not imply that the origin lies in the random Fatou set. Consider the following example.
\begin{example}
Let $\lambda=e^{2\pi i\alpha}$ be an irrational rotation, with $\alpha$ Brjuno number. We define the maps
$$
f_1(z) = \lambda(z + z^2), \; \; \mathrm{and} \; \;  f_2(z) = \lambda(z - z^2),
$$
and consider a measure such that $supp(\nu)=\{f_1,f_2\}$.

In this case we see immediately that every function in the semigroup $S_\nu=\langle f_1,f_2\rangle$ is linearizable. Nevertheless $f_1$ and $f_2$ are not simultaneously linearizable.

Let $z_0 \neq 0$ and define integers $n_i \in \{1,2\}$ recursively as follows: if $\mathrm{Re} (z_i) \ge 0$ then $n_{i+1} = 1$, else $n_{i+1} = 2$. Let $\omega:=(f_{n_1},f_{n_2},\dots)$, and $z_i=f^i_\omega(z_0)$.

We claim that the orbit of $z_0$ converges to infinity. To see this suppose that $\mathrm{Re} (z_i) \ge 0$. Then it follows that the angle between the vectors $z_i$ and $z_i^2$ is at most $\pi/2$, and hence
$$
|z_{i+1}| = |z_i + z_i^2| \ge |z_i|.
$$
The irrational rotation guarantees that the norm increases often enough to converge to infinity, proving the claim.

The conclusion is that in this case $S_\nu$ is not relatively compact, therefore the origin does not lie in the random Fatou set.
\end{example}

Suppose that $\nu$ is a neutral measure with compact support for which the origin lies in the random Fatou set. We write $G_\nu$ for the closure of $S_\nu$ in $\mathcal O(\mathbb C^m,0)$.
\begin{lemma}
\label{bochyp}
The set $G_\nu$ is a compact topological group. Moreover there exists an open subset $M\subset \mathbb C^m$ such that every $f\in G_\nu$ belongs to $Aut(M)$.
\end{lemma}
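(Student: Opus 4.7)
The proof splits into compactness, group structure, and construction of $M$. By Corollary \ref{trappingcor}, $S_\nu$ is relatively compact in the inductive limit topology on $\mathcal{O}(\mathbb{C}^m, 0)$, so $G_\nu = \overline{S_\nu}$ is compact. Composition of germs is continuous in this topology (see the appendix), so $G_\nu$ is closed under composition; the group axioms therefore reduce to showing that inverses lie in $G_\nu$.

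For closure under inverses, fix $f \in G_\nu$. Since $\mathcal{F}_\omega$ is almost surely normal, the derivative family $\{df^n_\omega(0)\}$ is almost surely bounded, so Proposition \ref{vanishingLyap} applied to the pushforward $(d_0)_*\nu$ yields that $\overline{dS_\nu}$ is a compact subgroup of $GL(m,\mathbb{C})$. In particular there exists a subsequence $n_k$ with $df(0)^{n_k} \to I$. By Corollary \ref{finitecomplinear}, $f$ is linearizable: $f = \varphi^{-1} \circ df(0) \circ \varphi$ for a germ-biholomorphism $\varphi$. Hence
\[
f^{n_k} = \varphi^{-1} \circ df(0)^{n_k} \circ \varphi \longrightarrow \mathrm{id}
\]
in the inductive limit topology. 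The sequence $f^{n_k - 1} \in S_\nu$ lives in the compact set $G_\nu$, and any subsequential limit $h$ satisfies $f \circ h = \mathrm{id}$ by continuity of composition. Since $df(0)$ is invertible, the unique germ solution is $h = f^{-1}$, so $f^{n_k - 1} \to f^{-1}$ and $f^{-1} \in G_\nu$. Continuity of inversion is then a routine compactness argument: any cluster point of $f_n^{-1}$ for $f_n \to f$ must coincide with $f^{-1}$.

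To construct $M$, apply Lemma \ref{trappinglemma} with some $\varepsilon_1 > 0$ small enough that every germ in $supp(\nu)$ is injective on $\mathbb{B}_{\varepsilon_1}$, producing $\rho_1 \leq \varepsilon_1$ with $f^n_\omega(\mathbb{B}_{\rho_1}) \subset \mathbb{B}_{\varepsilon_1}$ for all $n, \omega$; induction then shows each $f \in S_\nu$ is injective on $\mathbb{B}_{\rho_1}$. Apply the lemma again with $\varepsilon := \rho_1$ to obtain $\rho_2 \leq \rho_1$ with $f^n_\omega(\mathbb{B}_{\rho_2}) \subset \mathbb{B}_{\rho_1}$. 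Both inclusions pass to the closure, and by Hurwitz each $g \in G_\nu$ is injective on $\mathbb{B}_{\rho_1}$ (using $|\det dg(0)| = 1$ to rule out degeneracy). Set
\[
M := \bigcup_{g \in G_\nu} g(\mathbb{B}_{\rho_2}) \subset \mathbb{B}_{\rho_1};
\]
this is open since each $g$ is an open map. For any $f \in G_\nu$, the group property gives
\[
f(M) = \bigcup_{g \in G_\nu} (f \circ g)(\mathbb{B}_{\rho_2}) = \bigcup_{h \in G_\nu} h(\mathbb{B}_{\rho_2}) = M,
\]
and $f|_M$ is injective as a restriction of an injection on $\mathbb{B}_{\rho_1}$, so $f|_M \in \mathrm{Aut}(M)$. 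The main obstacle is the inverse step: lifting the compact-group structure from the linear differentials to the germ level crucially combines Proposition \ref{vanishingLyap} with the linearizability granted by Corollary \ref{finitecomplinear}.
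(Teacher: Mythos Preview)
Your argument is essentially the same as the paper's: compactness via Corollary~\ref{trappingcor}, inverses obtained from $f^{n_k}\to\mathrm{id}$ using linearizability and the unitary nature of $df(0)$, and $M$ defined as the union of the $g$-images of a small ball over all $g\in G_\nu$. Your two-step application of Lemma~\ref{trappinglemma} to get $\rho_2\le\rho_1$ is a slightly cleaner version of the paper's ``by taking a smaller $\varepsilon$ if necessary'' step.

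Two small slips are worth flagging. First, you invoke Corollary~\ref{finitecomplinear} for an arbitrary $f\in G_\nu$, but as stated that corollary only covers $f\in S_\nu$; likewise, you write ``$f^{n_k-1}\in S_\nu$'' when for $f\in G_\nu$ one only has $f^{n_k-1}\in G_\nu$. The paper avoids both issues by first establishing $f^{-1}\in G_\nu$ for $f\in S_\nu$ (where the corollary applies literally) and then passing to general $f\in G_\nu$ by approximation with $f_n\in S_\nu$ and a subsequential limit of $f_n^{-1}$. Your one-step argument can be made rigorous too---once $G_\nu$ is known to be a compact semigroup, the proof of Corollary~\ref{finitecomplinear} applies verbatim to any $f\in G_\nu$---but you should either say this explicitly or follow the paper's two-step route. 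Second, continuity of composition is not actually treated in the appendix, though it is straightforward to verify on the compact set $G_\nu$ using Corollary~\ref{compactset3}.
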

\begin{proof}
By Lemma \ref{trappinglemma} the set $G_\nu$ is compact. Furthermore, by Corollary \ref{finitecomplinear}, every $f\in S_\nu$ is linearizable and, by Proposition \ref{vanishingLyap} the differential $df(0)$ is conjugated to an element of $U(m)$. It follows that for some sequence $n_j$ we have $
f^{n_j}\to \textrm{id}$. This proves that $G_\nu$ contains the identity element. Furthermore  the germ $g=\lim_{j\to\infty}f^{n_j-1}$ is the inverse of $f$ in $\mathcal O(\mathbb C^m,0)$, thus $G_\nu$ contains also the inverse of $f$.

Let $f\in G_\nu$ and $f_n\in S_\nu$ such that $f_n\to f$. Every $f_n$ has an inverse $f_n^{-1}\in G_\nu$. By taking a subsequence if necessary, we may assume that $f_n^{-1}\to g\in G_\nu$. It is not difficult to prove that $g$ is the inverse element of $f$ in $\mathcal O(\mathbb C^m,0)$, which finally proves that $G_\nu$ is a compact group.

Let $\varepsilon>0$ and $\rho>0$ as in Lemma \ref{trappinglemma}. If we choose $\varepsilon$ small enough we may further assume that all the elements of $supp(\nu)$ are univalent on $\mathbb B_\varepsilon$. It follows that every $f\in S_\nu$ is univalent on $\mathbb B_\rho$. Since every $g\in G_\nu$ is invertible, by Hurwitz's theorem, it is also univalent on $\mathbb B_\rho$.

By taking a smaller $\varepsilon$ if necessary, we may further assume that every $g\in G_\nu$ is univalent on $\mathbb B_\varepsilon$. We define the open set
\[
M=\bigcup_{g\in G_\nu}g(\mathbb B_\rho)\subset \mathbb B_\varepsilon.
\]
Given $g\in G_\nu$ and $x\in M$, then $x=\hat g(z)$ for some $\hat g\in G_\nu$ and $z\in \mathbb B_\rho$. It follows that $g(M)\subset M$. Furthermore, since $g^{-1}\in G_\nu$, we obtain the equality $g(M)=M$. Which proves that $g\in Aut(M)$.
\end{proof}

The following result is known as Bochner's Linearization Theorem. A proof of the theorem, valid for $C^k$ diffeomorphism, can be found in \cite{DK}. The same proof is valid also in the holomorphic case, up to small modifications.

\begin{theorem}[Bochner's Linearization Theorem]
Let $M$ be a complex manifold and $x_0\in M$. Let $A$ be a continuous homomorphism from a compact group $K$ to $Aut(M)$ such that $A_k(x_0)=x_0$, for all $k\in K$. Then there exists a $K$ invariant open neighborhood $U$ of $x_0$ in $M$ and biholomorphism $\varphi$ from $U$ onto an open neighborhood $V\subset T_{x_0}M$ of $0$, such that:
\[
\varphi(x_0)=0, \qquad d\varphi(x_0)=id:T_{x_0}M\rightarrow T_{x_0}M
\]
and
\[
\varphi\circ A_k(x)=dA_k(x_0)\cdot \varphi(x)\qquad \forall k\in K,x\in U).
\]
\end{theorem}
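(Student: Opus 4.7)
The plan is to use the classical averaging argument over the Haar measure of $K$. First, I would choose local holomorphic coordinates near $x_0$ identifying an open neighborhood of $x_0$ in $M$ with an open neighborhood $W$ of the origin in $T_{x_0}M$, sending $x_0$ to $0$. Writing $L_k := dA_k(x_0) \in GL(T_{x_0}M)$, the chain rule shows that $k \mapsto L_k$ is a continuous representation of $K$. Since the action map $K \times M \to M$, $(k,x) \mapsto A_k(x)$, is continuous and $K$ is compact, the tube lemma yields an open neighborhood $W_0 \subset W$ of the origin with $A_k(W_0) \subset W$ for every $k \in K$. On $W_0$ I define
\[
\varphi(x) := \int_K L_k^{-1} \cdot A_k(x) \, d\mu(k),
\]
where $\mu$ is the normalized Haar measure on $K$. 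The integrand depends continuously on $k$ and holomorphically on $x$, and $K$ is compact, so $\varphi : W_0 \to T_{x_0}M$ is a well-defined holomorphic map.

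Next I would verify the three required properties. Evaluating at $0$ gives $\varphi(0) = 0$, and differentiation under the integral sign yields
\[
d\varphi(0) = \int_K L_k^{-1} \cdot L_k \, d\mu(k) = \textrm{id}.
\]
For the equivariance, fix $h \in K$. Using the identities $A_k \circ A_h = A_{kh}$ and $L_k L_h = L_{kh}$, together with right-invariance of $\mu$ via the substitution $j = kh$,
\[
\varphi(A_h(x)) = \int_K L_k^{-1} A_{kh}(x) \, d\mu(k) = L_h \int_K L_j^{-1} A_j(x) \, d\mu(j) = L_h \cdot \varphi(x).
\]
By the inverse function theorem, $\varphi$ is a biholomorphism from some open neighborhood $U_0$ of the origin onto its image $V_0 \subset T_{x_0}M$.

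To obtain a $K$-invariant domain, I would set $U := \bigcap_{k \in K} A_k^{-1}(U_0)$. Applying the tube lemma once more to the open set $A^{-1}(U_0) \subset K \times M$, which contains $K \times \{x_0\}$, shows that $U$ contains an open neighborhood of $x_0$; and for any $h \in K$ and $x \in U$ one has $A_k(A_h(x)) = A_{kh}(x) \in U_0$ for all $k$, so $A_h(x) \in U$ and hence $U$ is $K$-invariant. Setting $V := \varphi(U)$, the equivariance relation forces $V$ to be invariant under the linear action $L$. The main obstacle will be the careful handling of the domains of definition: one must ensure that each $A_k$ and $\varphi$ are defined on a common open set and that the resulting $U$ is both open and non-empty, which is exactly what compactness of $K$ and the tube lemma deliver at two separate points in the argument.
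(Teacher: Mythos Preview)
Your argument is correct and is precisely the classical Haar-averaging proof; the paper does not give its own proof but merely cites \cite{DK} for the $C^k$ case and remarks that the same argument works holomorphically, so your approach coincides with the one the paper points to. One small technical point: your $U=\bigcap_{k\in K}A_k^{-1}(U_0)$ is $K$-invariant but not obviously open---your tube-lemma step only shows it contains an open neighborhood of $x_0$---so you should replace $U$ by its interior, which is still $K$-invariant (each $A_h$ is a homeomorphism mapping $U$ to $U$, hence $\mathrm{int}(U)$ to $\mathrm{int}(U)$) and still contains $x_0$.
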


\begin{definition}
We say that two or more germs are \emph{simultaneously linearizable} if there exists a locally invertible $\varphi\in \mathcal O(\mathbb C^m,0)$ such that all the germs are linearized by $\varphi$.
\end{definition}

\begin{lemma}
Let $\nu$ be a neutral measure on $\mathcal O(\mathbb C^m,0)$ with compact support. If the origin lies in the random Fatou set then all the elements $f\in supp(\nu)$ are simultaneously linearizable.
\end{lemma}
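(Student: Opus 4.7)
The plan is to invoke Bochner's Linearization Theorem using the compact topological group $G_\nu$ and the open set $M\subset \mathbb C^m$ provided by Lemma \ref{bochyp}. We set the compact group to be $K=G_\nu$, the complex manifold to be $M$, the common fixed point to be $x_0=0$, and the homomorphism $A:G_\nu\to \mathrm{Aut}(M)$ to be the inclusion. All the group-theoretic hypotheses of Bochner are already in hand: Lemma \ref{bochyp} shows $G_\nu$ is a compact group under composition, $G_\nu\subset \mathrm{Aut}(M)$, and every element of $G_\nu$ fixes the origin.

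Once the continuity of $A$ is checked, Bochner's theorem yields a $G_\nu$-invariant neighborhood $U$ of $0\in M$ and a biholomorphism $\varphi:U\to V\subset \mathbb C^m$ with $\varphi(0)=0$, $d\varphi(0)=\mathrm{id}$, and
\[
\varphi\circ g \;=\; dg(0)\cdot\varphi,\qquad \text{for every } g\in G_\nu.
\]
Since $supp(\nu)\subset S_\nu\subset G_\nu$, the single germ $\varphi$ simultaneously linearizes every $f\in supp(\nu)$, which is precisely the content of the lemma.

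The only nontrivial point, and hence the main obstacle, is verifying the continuity of $A:G_\nu\to\mathrm{Aut}(M)$: the topology on $G_\nu$ is inherited from the inductive limit topology on $\mathcal O(\mathbb C^m,0)$, whereas Bochner's statement needs continuity with respect to the compact-open topology on $\mathrm{Aut}(M)$. Here Lemma \ref{trappinglemma} is crucial. It provides a ball $\mathbb B_\rho$ on which every $f\in S_\nu$, and therefore by closure every $g\in G_\nu$, is holomorphic and uniformly bounded. Convergence in the inductive limit topology on $G_\nu$ then implies uniform convergence on compact subsets of $\mathbb B_\rho$. Since $M=\bigcup_{g\in G_\nu}g(\mathbb B_\rho)$ and $G_\nu$ is a group, any compact $L\subset M$ is covered by finitely many $g_i(\mathbb B_\rho)$; continuity of composition within $G_\nu$ then transports uniform convergence on $\mathbb B_\rho$ to uniform convergence on $L$, giving the continuity of $A$.

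Once continuity is in hand, Bochner's theorem concludes the argument directly. The remainder is bookkeeping: recording that $d\varphi(0)=\mathrm{id}$ makes $\varphi$ a locally invertible element of $\mathcal O(\mathbb C^m,0)$ in the sense of the definition of linearizability, and noting that the linearizing conjugation relation holds for every single $f\in supp(\nu)$ because $supp(\nu)\subset G_\nu$.
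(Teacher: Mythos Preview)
Your proposal is correct and follows essentially the same route as the paper: invoke Lemma \ref{bochyp} to obtain the compact group $G_\nu\subset\mathrm{Aut}(M)$, then apply Bochner's Linearization Theorem to simultaneously linearize all elements of $G_\nu\supset supp(\nu)$. The paper's own proof is in fact more terse than yours---it simply asserts that the action ``satisfies the hypothesis of Bochner's Linearization Theorem'' without spelling out the continuity of $A:G_\nu\to\mathrm{Aut}(M)$, which you take the trouble to justify.
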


\begin{proof}
Let $G_\nu$ be the closure of $S_\nu$ and $M$ be the $G_\nu$-invariant open set described in Lemma \ref{bochyp}. The compact group $G_\nu$ induces an action on $M$ that satisfies the hypothesis of Bochner's Linearization Theorem. It follows that all the germs $f\in G_\nu$ can be simultaneously linearized.
\end{proof}

We are finally ready to conclude the proof of Theorem \ref{theorem:maintheorem}.
\begin{proof}[Proof of Theorem \ref{theorem:maintheorem}]
Suppose that the origin lies in the random Fatou set. By the previous lemma it follows that all the germs in $supp(\nu)$ are simultaneously linearizable, which implies that the semigroup $S_\nu$ is conjugated to the semigroup of the differentials $dS_\nu$. Furthermore by Proposition \ref{vanishingLyap}, the semigroup $dS_\nu$ is itself conjugate to a sub-semigroup of $U(m)$. The other implication of the theorem is trivial.
\end{proof}

\section{Discussion of semi-neutral measures}\label{section:semineutral}

Recall from section \ref{section:generalprob} that for every $\omega\in\Omega_{nor}$ the origin belongs to the Fatou set $F_\omega:=\textrm{int}(D_\omega)$. Let $U_\omega$ be the connected component of $F_\omega$ containing the origin.

\begin{proposition}
\label{degenerate}
Let $\nu$ be a semi-neutral measure on $\mathcal O(\mathbb C^m,0)$ with compact support, so that the origin lies in the random Fatou set. Then almost surely, every limit map $g=\lim_{k\to\infty}f^{n_k}_\omega$ is degenerate on $U_\omega$.
\end{proposition}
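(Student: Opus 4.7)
The plan is to combine a determinant computation at the origin with the Hurwitz theorem stated earlier in the paper. By the chain rule,
\[
\log|\det df^n_\omega(0)|=\sum_{i=0}^{n-1}\log|\det df_{T^i\omega}(0)|
\]
is a sum of i.i.d.\ random variables whose common mean $\E\log|\det df_\omega(0)|$ is strictly negative (possibly $-\infty$) by Corollary \ref{equivalentdefmeas}(d), the semi-neutral characterization. Since $\log|\det df_\omega(0)|$ is bounded above by compactness of $supp(\nu)$, the strong law of large numbers (in the extended form allowing mean $-\infty$) forces $\log|\det df^n_\omega(0)|\to -\infty$ almost surely, and hence $\det df^n_\omega(0)\to 0$ almost surely.

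Fix $\omega$ in the full measure event on which $\mathcal F_\omega$ is normal on $U_\omega$ and the above convergence holds. Given any subsequence for which the germ limit $g=\lim_k f^{n_k}_\omega$ exists, normality on $U_\omega$ lets us pass to a further subsequence so that $f^{n_k}_\omega\to g$ locally uniformly on the connected open set $U_\omega$; the extension of $g$ to $U_\omega$ is unambiguous by the identity principle, since any two such extensions agree as germs at the origin. Passing to the limit of the derivatives at $0$ then yields $\det dg(0)=\lim_k\det df^{n_k}_\omega(0)=0$.

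Shrinking $R>0$ in the definition of $D_\omega$ if necessary and using compactness of $supp(\nu)$, as in the proof of Lemma \ref{trappinglemma}, we may assume that every $f\in supp(\nu)$ is univalent on $\mathbb B_R$. Then each $f^n_\omega:D_\omega\to\mathbb B_R$ is injective as a composition of injective holomorphic maps, and in particular every $f^{n_k}_\omega$ is injective on $U_\omega$. By the Hurwitz theorem recalled earlier, the locally uniform limit $g$ is either injective or degenerate on $U_\omega$. An injective holomorphic self-map between open subsets of $\mathbb C^m$ is automatically a local biholomorphism, so injectivity of $g$ would force $\det dg(0)\neq 0$, contradicting the previous paragraph. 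Therefore $g$ must be degenerate on $U_\omega$.

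The main obstacle I anticipate is justifying the uniform univalence reduction in the third paragraph. In the neutral case, Proposition \ref{vanishingLyap} delivers $|\det df(0)|=1$ and hence local biholomorphy of every $f\in supp(\nu)$ for free; in the semi-neutral setting one must separately rule out $\det df_\omega(0)=0$ on a set of positive $\nu$-measure. I would expect to handle this by an argument in the spirit of Proposition \ref{derivativedoesnotvanish}: the existence of singular differentials in $supp(\nu)$ would produce iterates in $S_\nu$ with vanishing Jacobian at the origin, which one should show is incompatible with the origin lying in the random Fatou set.
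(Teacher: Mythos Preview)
Your first two paragraphs correctly give $\det dg(0)=0$. The gap is exactly where you flag it, and it is not repairable along the lines you propose. Nothing in the semi-neutral hypothesis, even combined with the origin lying in the random Fatou set, forces $\det df(0)\neq 0$ for $f\in supp(\nu)$: take $\nu=\tfrac12\delta_f+\tfrac12\delta_g$ on $\mathcal O(\mathbb C^2,0)$ with the linear maps $f(z,w)=(z,0)$ and $g(z,w)=(z,w/2)$. Here $\kappa_\mu=0$ and $\E\log|\det df_\omega(0)|=-\infty$, the maps are linear so the origin is trivially in the random Fatou set, yet $f$ is not univalent on any ball and neither is any $f^n_\omega$ once $f$ has occurred. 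Proposition~\ref{derivativedoesnotvanish} does not help: it bounds $\|M^n_\omega\|$ from below, not the determinant, and in this example $\|M^n_\omega\|=1$ while $\det M^n_\omega=0$ eventually almost surely. So the Hurwitz route is blocked in general, and knowing only $\det dg(0)=0$ does not by itself give $\det dg\equiv 0$ on $U_\omega$.

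The paper's proof bypasses injectivity by controlling the Jacobian on a whole ball rather than only at $0$. One first chooses $\varepsilon>0$ with $\E\log(|\det df(0)|+\varepsilon)<0$ (possible by the argument of Proposition~\ref{attractingmeasure}), then uses compactness of $supp(\nu)$ to find $r>0$ with $|\det df(z)|<|\det df(0)|+\varepsilon$ for all $f\in supp(\nu)$ and $\|z\|<r$. Normality at the origin produces $\delta_\omega>0$ with $f^n_\omega(\mathbb B_{\delta_\omega})\subset\mathbb B_r$ for all $n$, and the chain rule yields
\[
|\det df^n_\omega(z)|\le\prod_{k=1}^n\bigl(|\det df_{T^{k-1}\omega}(0)|+\varepsilon\bigr)\longrightarrow 0
\]
uniformly on $\mathbb B_{\delta_\omega}$. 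Hence $\det dg\equiv 0$ on $\mathbb B_{\delta_\omega}$, and the identity principle carries this to all of $U_\omega$, with no injectivity needed anywhere.
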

\begin{proof}
By Corollary \ref{equivalentdefmeas} we have $\E \log |\det df(0)|<0$. Along the lines of Proposition \ref{attractingmeasure}, we can choose $\varepsilon>0$ such that $\E \log (|\det df(0)|+\varepsilon)<0$. In particular for almost every $\omega\in\Omega$ we have
\begin{equation}
\label{zeroproduct}
\prod_{k=1}^\infty \left(|\det df_{T^{k-1}\omega}(0)|+\varepsilon\right)= 0.
\end{equation}

By compactness of $supp(\nu)$, we may choose $r>0$ so that for every $\Vert z\Vert <r$ we have
\[
|\det df(z)|<|\det df(0)|+\varepsilon,\qquad\forall f\in supp(\mu).
\]
Since the origin belongs to the random Fatou set, for almost every $\omega\in\Omega$ there exists $\delta_\omega>0$ such that $\Vert f^n_\omega(z)\Vert<r$ for every $z\in \mathbb B_{\delta_\omega}$ and $n\ge 0$.
We conclude that for $z\in \mathbb B_{\delta_\omega}$
\begin{align*}
|\det df^n_\omega(z)|&=\prod_{k=1}^n|\det df_{T^{k-1}\omega}(f_\omega^{k-1}(p))|\\
&\le \prod_{k=1}^n\left(|\det df_{T^{k-1}\omega}(0)|+\varepsilon\right).
\end{align*}
Let $n_k$ be a sequence such that $f^{n_k}_\omega\to g$ locally uniformly on a neighborhood $U$ of the origin, then on a possibly smaller neighborhood we have $\det dg\equiv 0$. By the identity principle we conclude that the same is true on $U_\omega$.
\end{proof}

Given $z\in U_\omega$ we define the \textit{stable set}
\[
\mathbb W^s_\omega(z):=\{w\in U_\omega\,|\,\Vert f^n_\omega(z)-f^n_\omega(w)\Vert\to 0\}.
\]

When $m=2$, we will prove that, given $\nu$ semi-neutral with compact support and such that the origin lies in the random Fatou set, the stable set through every point $z$ sufficiently close to the origin is locally a complex submanifold. It is a natural question whether the same is true when $m>2$.

Before proceeding to the proof of this result we will present two examples of this phenomenon in the case the germs in $supp(\nu)$ are linear maps.
\begin{example}
Suppose $\nu=\frac{1}{2}\delta_g+\frac{1}{2}\delta_h$, where
\[
g(z,w)=\begin{pmatrix}
1/2 & 0\\ 0 & 1
\end{pmatrix}\begin{pmatrix}
z \\ w
\end{pmatrix},\qquad h(z,w)=\begin{pmatrix}
1/2 & 1\\ 0 & 1
\end{pmatrix}\begin{pmatrix}
z \\ w
\end{pmatrix}.
\]
We notice that given $\omega\in \Omega$
\[
f^{n+1}_\omega(z,w)=\begin{pmatrix}
1/2^n & \alpha_n\\ 0 & 1
\end{pmatrix}\begin{pmatrix}
z \\ w
\end{pmatrix},\qquad\textrm{with }\alpha_n=\begin{cases}\frac{\alpha_n}{2}& \textrm{if }f_{T^n\omega}=g\\
\frac{\alpha_n}{2}+1& \textrm{if }f_{T^n\omega}=h.
\end{cases}
\]
The two Lyapunov indices of the measure $\nu$ are $\kappa_1=0$ and $\kappa_2=-\log 2$, therefore the measure is semi-neutral. Furthermore we have $0\le \alpha_n<2$ for every $n$ and every $\omega$, which implies that the origin lies in the random Fatou set. Given $(z_0,w_0)\in U_\omega=\mathbb C^2$, we can write its stable set as
\[
\mathbb W^s_\omega(z_0,w_0)=\{(z,w)\in\mathbb C^2\,|\, w=w_0\},
\]
which is a one-dimensional manifold. We notice that in this case the stable manifolds are independent from the sequence $\omega\in\Omega$, but that this is not true in general.

Consider for example the measure $\tilde \nu$, obtained by taking instead the maps
\[
g(z,w)=\begin{pmatrix}
1/2 & 0\\ 0 & 1
\end{pmatrix}\begin{pmatrix}
z \\ w
\end{pmatrix},\qquad h(z,w)=\begin{pmatrix}
1/2 & 1\\ 0 & 1
\end{pmatrix}\begin{pmatrix}
z \\ w
\end{pmatrix}.
\]
This measure has the same Lyapunov indices and the origin lies in the random Fatou set. However the sequence $f^n_\omega$ converges, for every $\omega$, to a map of the form $\tilde g(z,w)=(z+\beta_\omega w,0)$. Hence the stable manifolds are again parallel complex planes, but now depend on $\omega$.
\end{example}
\begin{remark}
The last two examples are helpful models in order to understand the random dynamics of semi-neutral measures. We notice that in both cases $df(0)$ and $dg(0)$ share an eigenvalue. In the theory of linear cocycles one says that a cocycle is \textit{strongly irreducible} if there is no finite family of proper subspaces invariant by $M_\omega$ for $\nu$-almost every $\omega$. This raises the following question
\par\medskip\noindent
\textit{Does there exist a semi-neutral measure, such that $M^n_\omega$ is almost surely bounded, but the corresponding cocycle is strongly irreducible?}
\end{remark}
\begin{lemma}
There exists $r>0$ and, for almost every $\omega\in\Omega$, a sequence $n_k$ dependent by $\omega$, so that $f^{n_k}_\omega\to g$ locally uniformly on $U_\omega$ and so that
\[
\mathbb W^s_\omega(z)=\{w\in U_\omega\,|\, g(w)=g(z)\}
\]
for every $z\in U_\omega$ which satisfies $\Vert g(z)\Vert\le r$.
\end{lemma}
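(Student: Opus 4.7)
The strategy is to extract a subsequence $n_k \to \infty$ along which $f^{n_k}_\omega \to g$ locally uniformly on $U_\omega$, and then upgrade this subsequential convergence to the claimed stable-set identification by means of a uniform Lipschitz estimate obtained through an Egorov-type truncation.

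First I fix the universal radius $r$. Because the origin lies in the random Fatou set, for almost every $\omega$ the family $\mathcal F_\omega$ is normal on a neighborhood of $0$, so there exist an $\omega$-dependent radius $\delta_\omega>0$ and a constant $L_\omega<\infty$ (obtained by the Cauchy integral formula applied to the normal family) such that every $f^n_\omega$ maps $\mathbb B_{\delta_\omega}$ into $\mathbb B_{R/2}$ with Lipschitz constant at most $L_\omega$. Both $\delta_\omega$ and $L_\omega$ are measurable in $\omega$, so by a standard truncation argument there exist a set $\Omega_A\subseteq \Omega_{nor}$ of positive measure, a radius $\delta_0>0$ and a constant $L<\infty$ with $\delta_\alpha\geq \delta_0$ and $L_\alpha\leq L$ for every $\alpha\in \Omega_A$. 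I set $r:=\delta_0/2$.

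For almost every $\omega$, weak Montel and a diagonal argument over an exhaustion of $U_\omega$ by compacts yield a subsequence $n_k\to\infty$, depending on $\omega$, with $f^{n_k}_\omega\to g$ locally uniformly on $U_\omega$. By Birkhoff's ergodic theorem applied to $\mathbf 1_{\Omega_A}$, the set $\{n:T^n\omega\in \Omega_A\}$ has density $\mu(\Omega_A)>0$ in $\mathbb N$, so after intersecting with it I may further assume that $T^{n_k}\omega\in \Omega_A$ for every $k$. Proposition \ref{degenerate} gives $\det dg\equiv 0$, while Proposition \ref{derivativedoesnotvanish} together with $df^{n_k}_\omega(0)\to dg(0)$ forces $\|dg(0)\|\geq 1$; hence $g$ has rank one near the origin.

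The inclusion $\mathbb W^s_\omega(z)\subseteq \{w\in U_\omega:g(w)=g(z)\}$ is immediate by passing to the limit along $n_k$ in the definition of the stable set. For the reverse inclusion, assume $g(z)=g(w)$ with $\|g(z)\|\leq r$, and set $\varepsilon_k:=\|f^{n_k}_\omega(z)-f^{n_k}_\omega(w)\|$. Since $f^{n_k}_\omega(z),f^{n_k}_\omega(w)\to g(z)$ with $\|g(z)\|\leq r<\delta_0/2$, both iterates lie in $\mathbb B_{\delta_0}$ for every sufficiently large $k$, and $\varepsilon_k\to 0$. Using the cocycle factorization $f^n_\omega=f^{n-n_k}_{T^{n_k}\omega}\circ f^{n_k}_\omega$ together with the uniform $L$-Lipschitz bound on $f^{n-n_k}_{T^{n_k}\omega}|_{\mathbb B_{\delta_0}}$ provided by $T^{n_k}\omega\in \Omega_A$, I obtain for every $n_k\leq n<n_{k+1}$
\[
\|f^n_\omega(z)-f^n_\omega(w)\|\leq L\cdot \varepsilon_k,
\]
which tends to $0$ as $k\to\infty$, so $w\in \mathbb W^s_\omega(z)$.

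The main obstacle is the initial construction of $\Omega_A$ with genuinely uniform constants $\delta_0$ and $L$. One has to justify measurability of $\omega\mapsto \delta_\omega$ and $\omega\mapsto L_\omega$ (this follows from normality and Cauchy estimates, but requires some care), and verify that the same pair $(\delta_0,L)$ controls \emph{all} the finite compositions $f^j_\alpha$, $j\geq 0$, $\alpha\in \Omega_A$, and not just the generators in $\operatorname{supp}(\nu)$. Once this uniformity is in place, the ergodic refinement of the subsequence $n_k$ and the telescoping Lipschitz bound are routine.
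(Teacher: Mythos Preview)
Your approach is essentially the same as the paper's: isolate a positive-measure ``good'' set on which the whole family $\{f^j_\alpha\}_{j\ge 0}$ is uniformly equicontinuous on a fixed ball, use recurrence to visit this set infinitely often, extract a convergent subsequence along those visits, and then telescope through the cocycle factorization. The paper builds the good set via Ascoli--Arzel\`a, producing a nested family $\mathcal B^{(k)}$ with moduli $(\delta_k,\varepsilon_k)$ and taking $\mathcal B=\bigcap_k\mathcal B^{(k)}$; you do it more directly via Cauchy estimates, obtaining a single Lipschitz constant $L$, which is arguably cleaner. The paragraph invoking Propositions~\ref{degenerate} and~\ref{derivativedoesnotvanish} to get rank one is not needed for the present lemma; that belongs to the corollary that follows.

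There is one genuine slip in the order of operations. You first extract $(n_k)$ with $f^{n_k}_\omega\to g$ by Montel, and only afterwards ``intersect'' with the return times $\{n:T^n\omega\in\Omega_A\}$. Positive density of the return times in $\mathbb N$ says nothing about their intersection with an \emph{a priori} arbitrary sparse sequence $(n_k)$: that intersection could well be finite. The fix is simply to reverse the order, as the paper does: first take the (a.s.\ infinite) set of return times to $\Omega_A$, and \emph{then} use Montel and a diagonal argument to extract from it a subsequence along which $f^{n_k}_\omega$ converges locally uniformly on $U_\omega$. With that correction, and bearing in mind that Cauchy estimates give the Lipschitz bound only on a ball strictly smaller than $\mathbb B_{\delta_0}$ (so $r$ should be chosen accordingly), your argument goes through.
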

\begin{proof}
Given $r>0$ we define the measurable sets
\[
\mathcal A_r:=\{\omega\in\Omega\,|\,\mathbb B_r\Subset U_\omega\}.
\]
It is clear that $\mathcal A_{r_1}\supset \mathcal A_{r_2}$, when $r_1<r_2$, and that $\bigcup_{r>0}\mathcal A_r=\Omega_{nor}$. Since the latter is a full measure set, we may choose $r>0$ so that $\mu(\mathcal A_r)>1/2$.

Let $\varepsilon_1>0$, by the Ascoli--Arzel\`a Theorem the collection of measurable sets
\[
\mathcal B_\delta^{(1)}:=\left\{\omega\in\mathcal A_r\,\Big|\,\begin{array}{c}
\forall z,w\in \mathbb B_r\textrm{ with }\Vert z-w\Vert<\delta :\\
\sup_n\Vert f^n_\omega(z)-f^n_\omega(w)\Vert<\varepsilon_1
\end{array}
\right\},
\]
defined for $\delta>0$, covers $\mathcal A_r$. Furthermore it satisfies $\mathcal B_{\delta}^{(1)}\supset \mathcal B_{\delta'}^{(1)}$, for $\delta<\delta'$, thus we may choose $\delta_1>0$ so that $\mu(\mathcal B_{\delta_1}^{(1)})>1/2$. We will write $\mathcal B^{(1)}$ for the set $\mathcal B^{(1)}_{\delta_1}$.

Given a sequence of positive real numbers $\varepsilon_k$ with $\lim_{k\to\infty}\varepsilon_k=0$, we may recursively define a sequence $\delta_k$ of positive real numbers, in such a way that
\[
\mathcal B^{(k+1)}:=\left\{\omega\in\mathcal \mathcal B^{(k)}\,\Big|\,\begin{array}{c}
\forall z,w\in \mathbb B_r\textrm{ with }\Vert z-w\Vert<\delta_{k+1} :\\
\sup_n\Vert f^n_\omega(z)-f^n_\omega(w)\Vert<\varepsilon_{k+1}
\end{array}
\right\}
\]
has measure strictly greater that $1/2$.

Since the nested sets $\mathcal B^{(k)}$ all have measure at least $1/2$, the intersection, which we will simply denote as $\mathcal B$ satisfies $\mu(\mathcal B)\ge 1/2$.

By ergodicity of the transformation $T$ there exists, for almost every $\omega\in \Omega_{nor}$ a sequence $n_k$ so that $T^{n_k}(\omega)\in\mathcal B$. By taking a subsequence if necessary, we find that $f^{n_k}_\omega$ converges locally uniform on $U_\omega$ to a function $g$.

Given $z\in U_\omega$ such that $\Vert g(z)\Vert<r$, it is clear that $g(w)=g(z)$ for every $w\in \mathbb W^s_\omega(z)$. On the other hand, given $w\in U_\omega$ which satisfies $g(w)=g(z)$, we can find $k_0$ so that $f^{n_k}_\omega(z),f^{n_k}_\omega(w)\in \mathbb B_r$ for every $k\ge k_0$. Furthermore we may also find a sequence $k_0\le k_1\le k_2\le \dots$, so that whenever $k\ge k_i$ we have $\Vert f^{n_k}_\omega(z)-f^{n_k}_\omega(w) \Vert<\delta_i$.

Since $T^{n_k}\omega\in \mathcal B$ for every $k$, we conclude that, given $n\ge n_{k_i}$,
\[
\Vert f^n_\omega(z)-f^n_\omega(w)\Vert=\left\Vert f^{n-n_{k_i}}_{T^{n_{k_i}}\omega}(f^{n_{k_i}}_\omega(z))-f^{n-n_{k_i}}_{T^{n_{k_i}}\omega}(f^{n_{k_i}}_\omega(w))\right\Vert<\varepsilon_i.
\]
This shows that $w\in \mathbb W^s_\omega(z)$, which proves the desired equality.
\end{proof}
When $m=2$ and the measure is semi-neutral we obtain the following (local) version of the Stable Manifold theorem

\begin{corollary}[Stable Manifold Theorem]
Let $\mu$ be a semi-neutral measure on $\mathcal O(\mathbb C^2,0)$ with compact support, for which the origin lies in the random Fatou set. Then there exists, for almost every $\omega\in\Omega$,a constant $\rho>0$ sufficiently small so that, given $z\in \mathbb B_\rho$, the set $\mathbb W^s_\omega(z)$ is locally a one-dimensional complex manifold.
\end{corollary}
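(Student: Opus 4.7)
The plan is to combine the identification of the stable set given by the preceding lemma with Proposition~\ref{degenerate} and the holomorphic constant rank theorem. By that lemma, for almost every $\omega\in\Omega$ there exists $r>0$ and a subsequence $n_k$ with $f^{n_k}_\omega \to g$ locally uniformly on $U_\omega$, such that
\[
\mathbb W^s_\omega(z) = \{w\in U_\omega \,:\, g(w)=g(z)\}
\]
for every $z\in U_\omega$ satisfying $\Vert g(z)\Vert \le r$. Proposition~\ref{degenerate} tells us that $\det dg \equiv 0$ on $U_\omega$, so the rank of $dg$ is at most one everywhere.

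To pin down the rank at the origin, I would apply Proposition~\ref{derivativedoesnotvanish} to the i.i.d.\ linear cocycle induced by $(d_0)_*\nu$: since the origin lies in the random Fatou set, the family of differentials $\{df^n_\omega(0)\}$ is almost surely bounded, so this semi-neutral linear cocycle is almost surely bounded, and the proposition yields $\Vert df^n_\omega(0)\Vert \ge 1$ for all $n$. Passing to a further subsequence of $n_k$ along which $df^{n_k}_\omega(0)$ converges, the limit matrix agrees with $dg(0)$ (because differentiation is continuous under locally uniform convergence), so $\Vert dg(0)\Vert \ge 1$. Together with $\det dg(0)=0$ this forces $dg(0)$ to have rank exactly one. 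Continuity of $dg$ combined with $\det dg\equiv 0$ then gives that $dg$ has rank exactly one on a neighborhood $V$ of the origin.

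The final step is to invoke the holomorphic constant rank theorem on $V$: there exist local biholomorphic coordinates $(u_1,u_2)$ near $0\in V$ and $(v_1,v_2)$ near $g(0)=0$ in which $g$ takes the form $(u_1,u_2)\mapsto(u_1,0)$. In these coordinates the fibers of $g$ are the complex lines $\{u_1=\textrm{const}\}$. Choose $\rho>0$ small enough that $\mathbb B_\rho\subset V$ and $\Vert g(z)\Vert\le r$ for all $z\in\mathbb B_\rho$; then for each such $z$ the previous lemma identifies $\mathbb W^s_\omega(z)$, in a neighborhood of $z$, with the fiber $g^{-1}(g(z))$, which in the chosen coordinates is a complex line through $z$, hence locally a one-dimensional complex manifold. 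The only delicate point I anticipate is ensuring that $dg$ has rank exactly one on an open neighborhood rather than merely at the origin, but lower semi-continuity of the rank together with the constraint $\det dg\equiv 0$ makes this immediate.
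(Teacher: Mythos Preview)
Your proposal is correct and follows essentially the same route as the paper: identify $\mathbb W^s_\omega(z)$ with the fibers of the limit map $g$ via the preceding lemma, use Proposition~\ref{degenerate} to force $\det dg\equiv 0$, use Proposition~\ref{derivativedoesnotvanish} to guarantee $\Vert dg(0)\Vert\ge 1$ so that $g$ has constant rank one near the origin, and conclude with the constant rank (equivalently, implicit function) theorem. Your write-up is in fact slightly more explicit than the paper's in justifying the hypothesis of Proposition~\ref{derivativedoesnotvanish} and in invoking lower semicontinuity of the rank, but the argument is the same.
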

\begin{proof}
Let $\omega\in\Omega$ be such that the map $g$ described in the previous lemma exists.
Since $g(0)=0$ we can choose $\rho>0$ sufficiently small so that $\mathbb B_\rho\subset U_\omega$ and $g(\mathbb B_\rho)\subset \mathbb B_r$. Here the value of $r$ is again determined by the previous lemma.

By Proposition \ref{degenerate} the function $g$ is degenerate. Furthermore, by Proposition \ref{derivativedoesnotvanish}, we must have $\Vert dg(0)\Vert\ge 1$, thus we may further assume, by shrinking $\rho$ is necessary, that $g$ has rank $1$ on every point of $\mathbb B_\rho$.

By the constant rank Theorem $g(\mathbb B_\rho)$ is a one-dimensional submanifold and every point in it is a regular value for the holomorphic map $g:\mathbb B_\rho\rightarrow g(\mathbb B_\rho)$. By the Implicit Function Theorem, given $z\in\mathbb B_\rho$, the level set $\{w\in \mathbb B_\rho\,|\, g(w)=g(z)\}$ is a one dimension complex manifold. The level sets of the map $g$ coincide with the stable sets of the sequence $\omega$. It follows that the latter are locally one-dimensional manifolds.
\end{proof}

%When we consider holomorphic germs in two variables we obtain the following
%\begin{corollary}
%Let $\mu$ be a semi-neutral measure on $\mathcal O(\mathbb C^2,0)$ with compact support and so that the origin lies in the random Fatou set. Let $\omega\in \Sigma^+$, such that $\mathcal F_\omega$ is a normal family, then every normal limit $g=\lim f^{n_k}_\omega$ has rank $1$ and its image $\Sigma_g$ is locally smooth at the origin.
%\end{corollary}

\appendix
\section{Appendix: Holomorphic germs and their topology}
\label{section:top}

We write $\mathcal O(\mathbb C^m,0)$ for the space of germs at $0$ of holomorphic maps from $\mathbb C^m$ to itself. Note that in the main text we required that the germs fix the origin, but this requirement plays no role here.

Our goal in this appendix is to define a topology on $\mathcal O(\mathbb C^m,0)$ with the following property:

\medskip

\noindent{\bf Local uniform convergence.} \emph{
A sequence of germs $(f_k)$ converges to a germ $f$ if there exists $r>0$ such that $f$ and all the $f_n$ admit a representative which is bounded and holomorphic on $\mathbb B_r$ and $$\sup_{\Vert z\Vert\le r}\Vert f_k(z)-f(z)\Vert\to 0\quad \textrm{as}\quad n\to\infty.$$
}

\medskip

The construction of such a topology, given in Theorem \ref{convergentsequences} below, resembles very closely the so-called inductive limit topology of a sequence of nested Fr\'echet spaces $X_1\subset X_2\subset \dots\subset X_n \subset \dots$. See \cite{Ru} or \cite{Gr} for an example of this construction in the case of $C^\infty_0(\Omega)$. For a discussion of inductive limit topology for holomorphic germs, see for instance \cite{M}, where germs in possibly infinite dimensional spaces are considered. While the results presented here are undoubtedly known to experts on the topic, we include them for the sake of completeness.

Suppose that $\varepsilon_n\to 0$ is a strictly decreasing sequence. Let $X_n$ be the space of all bounded and holomorphic functions on $\mathbb B_{\varepsilon_n}$ equipped with the distance
\[
d_n(f,g):=\sup_{\Vert z\Vert \le \varepsilon_n}\Vert f(z)-g(z)\Vert.
\]
There exists a natural injection of $X_n$ into $\mathcal O(\mathbb C^m,0)$, which we will denote by $\pi_n$. Furthermore it is clear that $X_n\subset X_{n+1}$.

We write $\tau_n$ for the standard metric topology on $X_n$. We note that $\tau_n$ is not equivalent to the compact-open topology. However, it will be clear that the inductive limit topology on $\mathcal{O}(\mathbb C^m,0)$ obtained using either sequence of topologies is identical. %We notice that a sequence of germs $(f_k)$ converges locally uniformly to $f$ if and only if the sequence is convergent in $(X_n,d_n)$ for some $n \in \mathbb N$.

\begin{definition}[Inductive limit topology]
The \textit{inductive limit topology}, which we will denote by $\tau_{ind}$, is the finest topology on $\mathcal O(\mathbb C^m,0)$ such that each injection $\pi_n:X_n\rightarrow \mathcal O(\mathbb C^m,0)$ is a continuous function.

In other words a set $U$ is open in the inductive limit topology if and only if, for each $n$, $X_n\cap \pi_n^{-1}(U)$ is open with respect to the topology $\tau_n$.
\end{definition}

Since each map $\pi_n$ is injective, from now on we will avoid  writing the map $\pi_n$ and we will consider each $X_n$ as a subset of $\mathcal O(\mathbb C^m,0)$.

\begin{remark}
In the classical construction of the inductive limit topology of a nested sequence of Fr\'echet spaces, $X_1\subset X_2\subset\dots$, it is assumed that the topology on $X_n$ is induced by the topology on $X_{n+1}$, which fails in our setting. Consider for example the germs $f_n=n\left(\frac{z}{\varepsilon_1}\right)^n$. This sequence converges uniformly to $0$ on $\mathbb B_r$ for $r<\varepsilon_1$, in particular we have $d_2(f_n,0)\to 0$. On the other hand we see that $d_1(f_n,0)\to \infty$. This shows that $\tau_1$ is different from $\tau_2|_{X_1}$.
\end{remark}
\begin{lemma}
\label{derivation}
Let $\alpha\in\mathbb N^m$. The derivation $D_\alpha: f\mapsto \partial_\alpha f(0)$ is a continuous map from $\mathcal O(\mathbb C^m,0)$, endowed with the inductive limit topology, to $\mathbb C$.
\end{lemma}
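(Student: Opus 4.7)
The plan is to exploit the universal property of the inductive limit topology: a function $\varphi$ from $\mathcal O(\mathbb C^m,0)$ to a topological space $Y$ is continuous with respect to $\tau_{ind}$ if and only if each restriction $\varphi|_{X_n}\colon X_n\to Y$ is continuous with respect to $\tau_n$. So it suffices to check that $D_\alpha$, restricted to each space $X_n$ of functions bounded and holomorphic on $\mathbb B_{\varepsilon_n}$, is continuous in the sup-norm.

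First I would note that $D_\alpha$ is well-defined on germs, since the value $\partial_\alpha f(0)$ depends only on the Taylor coefficients of $f$ at the origin, hence only on the germ class. Then, fixing $n$, I would pick any radius $0<r<\varepsilon_n$ and apply the standard Cauchy estimate: for every $f\in X_n$,
\[
|D_\alpha f| = |\partial_\alpha f(0)| \le \frac{\alpha!}{r^{|\alpha|}}\sup_{\Vert z\Vert\le r}\Vert f(z)\Vert \le \frac{\alpha!}{r^{|\alpha|}}\, d_n(f,0).
\]
Applying this to $f-g$, which again lies in $X_n$, yields the Lipschitz bound
\[
|D_\alpha f - D_\alpha g|\le \frac{\alpha!}{r^{|\alpha|}}\,d_n(f,g),
\]
so $D_\alpha|_{X_n}$ is $d_n$-continuous. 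Since $n$ is arbitrary, $D_\alpha$ is continuous with respect to $\tau_{ind}$.

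There is essentially no serious obstacle here; the only small point to get right is the order of quantifiers in the universal property of the inductive limit, namely that continuity on the whole of $\mathcal O(\mathbb C^m,0)$ reduces exactly to continuity of each $D_\alpha\circ\pi_n$. Once that reduction is made, Cauchy's estimate on a slightly smaller polydisk (or ball) inside $\mathbb B_{\varepsilon_n}$ does all the work.
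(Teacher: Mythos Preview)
Your proof is correct and takes essentially the same approach as the paper. Both arguments reduce continuity of $D_\alpha$ to continuity of each restriction $D_\alpha|_{X_n}$; the paper does this by contradiction from the definition of $\tau_{ind}$ and then invokes Weierstrass's theorem on convergence of derivatives, while you state the universal property of the final topology explicitly and give a direct Lipschitz bound via Cauchy estimates---but these are two phrasings of the same idea.
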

\begin{proof}
If $D_\alpha$ was not continuous, there would exists $U\subset\mathbb C$ open and a natural number $n$ such that $V:=D_\alpha^{-1}(U)\cap X_n$ is not open with respect to the metric topology of $X_n$. If so, there exists $(f_k)\subset X_n\setminus V$ such that $f_k\to g\in V$ with respect to the metric $d_n$. By Weierstrass we have $D_\alpha f_k\to D_\alpha g$, which is not possible, since $f_k\not\in V$.
\end{proof}
\begin{corollary}
The inductive limit topology on $\mathcal O(\mathbb C^m,0)$ is Hausdorff.
\end{corollary}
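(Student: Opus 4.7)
The plan is to deduce the Hausdorff property from the continuity of all derivation maps $D_\alpha \colon \mathcal{O}(\mathbb{C}^m,0) \to \mathbb{C}$ established in Lemma \ref{derivation}, combined with the elementary fact that a topological space admitting a separating family of continuous functions into a Hausdorff space is itself Hausdorff.

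First I would take two distinct germs $f, g \in \mathcal{O}(\mathbb{C}^m,0)$. By the identity principle for holomorphic functions, since $f \not= g$ as germs at the origin, their Taylor expansions at $0$ must differ in at least one coefficient. Equivalently, there exists a multi-index $\alpha \in \mathbb{N}^m$ for which
\[
\partial_\alpha f(0) \;\not=\; \partial_\alpha g(0).
\]
This is the step where the specific structure of $\mathcal{O}(\mathbb{C}^m,0)$ as a space of germs is used; it is not a general topological fact.

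Next, since $\mathbb{C}$ is Hausdorff, I can choose disjoint open subsets $U, V \subset \mathbb{C}$ with $\partial_\alpha f(0) \in U$ and $\partial_\alpha g(0) \in V$. By Lemma \ref{derivation} the map $D_\alpha$ is continuous with respect to the inductive limit topology $\tau_{ind}$, so the preimages $D_\alpha^{-1}(U)$ and $D_\alpha^{-1}(V)$ are open in $\mathcal{O}(\mathbb{C}^m,0)$. They are disjoint because $U \cap V = \emptyset$, and they contain $f$ and $g$ respectively. This exhibits disjoint $\tau_{ind}$-open neighborhoods separating $f$ from $g$, proving Hausdorffness.

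I do not expect any serious obstacle here: the only nontrivial ingredient is the continuity of the derivations, which is already in hand, and the separation of Taylor coefficients for distinct germs, which is immediate from the definition of a germ. The corollary is essentially a one-line consequence of Lemma \ref{derivation} once one recalls that a space mapped continuously and separatingly into a Hausdorff space is Hausdorff.
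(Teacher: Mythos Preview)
Your proposal is correct and follows exactly the same approach as the paper: distinct germs differ in some Taylor coefficient $D_\alpha$, and continuity of $D_\alpha$ from Lemma~\ref{derivation} then separates them via disjoint preimages of open sets in $\mathbb{C}$. The paper's proof is just a terser version of what you wrote.
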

\begin{proof}
Given two germs $f_1\neq f_2$ then there exists $\alpha\in \mathbb N^m$ such that $D_\alpha f\neq D_\alpha f_2$. The Hausdorff property of $\tau_{ind}$ follows from the continuity of $D_\alpha$.
\end{proof}

Recall that a subset $U$ of a topological space $X$ is \textit{sequentially open} if each sequence $(x_k)$ in $X$ converging to a point of $U$ is eventually in $U$. We say that $X$ is a \textit{sequential space} if every sequentially open subset of $X$ is open.
\begin{lemma}
\label{sequentialsp}$\mathcal O(\mathbb C^m,0)$, endowed with the inductive limit topology, is a sequential space.
\end{lemma}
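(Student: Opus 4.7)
The plan is to show directly that any sequentially open subset $U\subset \mathcal O(\mathbb C^m,0)$ is open in $\tau_{ind}$. By the very definition of the inductive limit topology, $U$ is $\tau_{ind}$-open if and only if $U\cap X_n$ is $\tau_n$-open for every $n$, so my first step would be to fix $n$ and reduce the problem to showing that $U\cap X_n$ is open in the metric space $(X_n,d_n)$.

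Since $(X_n,d_n)$ is metric, hence first countable, hence a sequential space, it suffices to verify that $U\cap X_n$ is sequentially open in $(X_n,\tau_n)$. So I would pick an arbitrary sequence $(f_k)\subset X_n$ with $d_n(f_k,f)\to 0$ for some $f\in U\cap X_n$, and try to conclude that $f_k$ eventually lies in $U\cap X_n$.

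The key observation is that the inclusion $\pi_n:(X_n,\tau_n)\to(\mathcal O(\mathbb C^m,0),\tau_{ind})$ is continuous by the very definition of $\tau_{ind}$. Therefore any $d_n$-convergent sequence in $X_n$ is also $\tau_{ind}$-convergent (with the same limit) in $\mathcal O(\mathbb C^m,0)$. In particular $f_k\to f$ in $\tau_{ind}$ and $f\in U$, so the assumption that $U$ is sequentially $\tau_{ind}$-open forces $f_k\in U$ for all large $k$, hence $f_k\in U\cap X_n$ for all large $k$. This yields sequential openness of $U\cap X_n$ in $X_n$, hence (metrizability) genuine openness, and completes the argument.

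There is essentially no hard step here: the statement is an unpacking of definitions, and the only thing to notice is that the property of being sequentially open transfers from $(\mathcal O(\mathbb C^m,0),\tau_{ind})$ down to each metric stratum $(X_n,\tau_n)$ via the continuous inclusion $\pi_n$. Once that is observed, first countability of each $X_n$ supplies the rest. Note that this argument does not require the full characterization of $\tau_{ind}$-convergent sequences given in Theorem \ref{convergentsequences} (that local uniform convergence on some common $\mathbb B_r$ characterizes convergence); only the elementary fact that $d_n$-convergence implies $\tau_{ind}$-convergence is needed.
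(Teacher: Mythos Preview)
Your proof is correct and follows essentially the same approach as the paper: both show that $d_n$-convergence implies $\tau_{ind}$-convergence (you via continuity of $\pi_n$, the paper by a direct unfolding of the definition), and then use that metric spaces are sequential to conclude that $U\cap X_n$ is open in $X_n$ for each $n$. Your phrasing via the continuity of the inclusion is slightly cleaner, but the argument is the same.
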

\begin{proof}
First of all we notice that if $f_k\to f$ with respect to the metric topology of $X_n$, then the convergence is valid in $\tau_{ind}$. If this was not the case, we could find an open set $U\ni f$ in the inductive limit topology such that $(f_k)$ is not eventually contained in $U$. However this is not possible since $U\cap X_n$ is open, with respect to the metric topology of $X_n$, and $f_k$ converges to $f$ in this topology.

As a consequence, given $U\subset \mathcal O(\mathbb C^m,0)$ sequentially open, the set $U\cap X_n$ is sequentially open in the metric topology of $X_n$. Metric spaces are sequential, thus $U\cap X_n$ is open in $X_n$. By the definition of $\tau_{ind}$ it follows that $U$ is open, which concludes the proof.
\end{proof}

A proof of the following proposition can be found in \cite[pg. 209]{ETop}.
\begin{proposition}
Sequential compactness and countable compactness are equivalent in the class of sequential Hausdorff spaces.
\end{proposition}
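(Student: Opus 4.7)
My approach is to prove the two implications separately. The forward direction, that sequential compactness implies countable compactness, holds in any topological space and needs neither the sequential nor the Hausdorff hypothesis: a space is countably compact iff every sequence has a cluster point, and the limit of a convergent subsequence of $(x_n)$ is such a cluster point.

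For the converse, suppose $X$ is sequential, Hausdorff, and countably compact, and fix $(x_n) \subset X$. I would first reduce to the case where the terms are pairwise distinct: if some value is attained infinitely often, the associated constant subsequence already converges. Assuming distinct terms, set $A := \{x_n : n \in \mathbb N\}$; since $A$ is infinite, countable compactness yields an accumulation point $x$ of $A$.

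The core of the argument is a proof by contradiction. Assume no subsequence of $(x_n)$ converges, and consider $A' := A \setminus \{x\}$. The plan is to show that $A'$ is sequentially closed. Let $(y_k) \subset A'$ converge to some $y \in X$, and write $y_k = x_{m(k)}$. If the index set $\{m(k) : k \in \mathbb N\}$ is finite, some $x_{m_0}$ appears infinitely often in $(y_k)$, giving a constant subsequence equal to $x_{m_0}$; as a subsequence of $(y_k)$ it still converges to $y$, and the Hausdorff property forces $y = x_{m_0} \in A'$. If the index set is infinite, I would extract first a subsequence with distinct indices and then a further subsequence with strictly increasing indices --- possible because any infinite subset of $\mathbb N$ admits an increasing enumeration. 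This yields a genuine subsequence of $(x_n)$ converging to $y$, contradicting the standing assumption. Hence $A'$ is sequentially closed, and by sequentiality of $X$ it is closed. But $x$ being an accumulation point of $A$ gives $x \in \overline{A \setminus \{x\}} = \overline{A'} = A'$, contradicting $x \notin A'$. This contradiction produces the desired convergent subsequence.

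The main obstacle is the realization step: a sequence $(y_k) \subset A'$ is not literally a subsequence of $(x_n)$, because the indices $m(k)$ may repeat or fail to be monotone. The two-step extraction --- first to distinct indices, then to a monotone subsequence --- bridges this gap using only the combinatorics of $\mathbb N$ and the trivial fact that subsequences of convergent sequences converge to the same limit. Hausdorffness is used in the finite-multiplicity case to pin down the limit of a constant subsequence, while sequentiality performs the essential task of promoting sequential closure to topological closure.
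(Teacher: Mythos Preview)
Your argument is correct. Note, however, that the paper does not supply its own proof of this proposition: it simply refers the reader to an external source and moves on. There is therefore no paper-proof to compare against; you have provided a self-contained demonstration of what the paper delegates to the literature.

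One small point worth tightening: the reduction to pairwise distinct terms is slightly elliptical. If no value is attained infinitely often, each value appears only finitely many times, so one first passes to a subsequence with distinct terms before defining $A$; a convergent subsequence of that subsequence is then a convergent subsequence of the original $(x_n)$. Making this explicit closes the only gap in the write-up.
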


Given $r>0$ and $f\in X_n$, we write $B_n(f,r)$ (respectively $\overline{B}_n(f,r)$) for the open (respectively closed) ball of radius $r$ and center $f$ with respect to the metric $d_n$.
\begin{lemma}
\label{relatcomp}
The closed ball $ \overline B_n(f,r)$ is compact in $(X_{n+1},d_{n+1})$.
\end{lemma}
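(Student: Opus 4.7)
The plan is to reduce the statement to Montel's theorem. Every $g \in \overline B_n(f,r)$ is a bounded holomorphic function on $\mathbb B_{\varepsilon_n}$ satisfying $d_n(g,f)\le r$, hence $\sup_{\mathbb B_{\varepsilon_n}}\Vert g\Vert \le \sup_{\mathbb B_{\varepsilon_n}}\Vert f\Vert + r$. Thus $\overline B_n(f,r)$ is a uniformly bounded family of holomorphic maps on $\mathbb B_{\varepsilon_n}$.

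Since $(X_{n+1},d_{n+1})$ is a metric space, it suffices to show sequential compactness. So pick any sequence $(g_k)\subset \overline B_n(f,r)$. By the previous uniform bound together with Montel's theorem, $(g_k)$ is a normal family on $\mathbb B_{\varepsilon_n}$: there is a subsequence $(g_{k_j})$ and a holomorphic $g:\mathbb B_{\varepsilon_n}\to \mathbb C^m$ such that $g_{k_j}\to g$ uniformly on every compact subset of $\mathbb B_{\varepsilon_n}$.

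Because $\varepsilon_{n+1}<\varepsilon_n$, the closed ball $\overline{\mathbb B}_{\varepsilon_{n+1}}$ is a compact subset of $\mathbb B_{\varepsilon_n}$, so the convergence is uniform there; by the definition of $d_{n+1}$, this gives $d_{n+1}(g_{k_j},g)\to 0$. It remains to check that $g$ belongs to $\overline B_n(f,r)$. Passing pointwise to the limit in $\Vert g_{k_j}(z)-f(z)\Vert\le r$ for $z\in \mathbb B_{\varepsilon_n}$ shows that $g\in X_n$ with $d_n(g,f)\le r$, so $g\in \overline B_n(f,r)$, completing the proof.

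The only subtle point is that compactness is claimed in the coarser topology $(X_{n+1},d_{n+1})$ rather than in $(X_n,d_n)$: the shrinkage of radii $\varepsilon_{n+1}<\varepsilon_n$ is exactly what lets one convert Montel's local-uniform convergence into uniform convergence on the full ball defining $d_{n+1}$. This is where the need to move from $X_n$ to $X_{n+1}$ enters, and there is no analogous statement inside $X_n$ itself.
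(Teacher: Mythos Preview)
Your proof is correct and follows essentially the same approach as the paper: both use Montel's theorem on the uniformly bounded family to extract a subsequence converging locally uniformly on $\mathbb B_{\varepsilon_n}$, observe that this gives $d_{n+1}$-convergence since $\overline{\mathbb B}_{\varepsilon_{n+1}}\Subset \mathbb B_{\varepsilon_n}$, and then check that the limit remains in $\overline B_n(f,r)$. The only cosmetic difference is that the paper first reduces to $f=0$, whereas you work directly with a general center $f$.
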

\begin{proof}
It is sufficient to prove the lemma for the case $f=0$. Suppose $(f_k)$ is a sequence in $ \overline B_n(0,r)$, then by the weak Montel Theorem there exists a subsequence $k_m$ such that $f_{k_m}\to f_\infty$ uniformly on every compact subset $K\subset \mathbb B_{\varepsilon_n}$. In particular we have $d_{n+1}(f_{k_m},f_\infty)\to 0$.

The function $f_{\infty}$ is holomorphic on $\mathbb B_{\varepsilon_n}$. Furthermore, by uniform convergence on compact subsets of $\mathbb B_{\varepsilon_n}$, we have $\Vert f_\infty(z)\Vert\le r$ for every $z\in B_{\varepsilon_n}(0)$. This proves that $f_{\infty}\in \overline B_n(0,r)$, which concludes the proof of the lemma.
\end{proof}

\begin{theorem}
\label{convergentsequences}
A sequence $(f_k)$ is a convergent sequence with respect to $\tau_{ind}$ if and only if there exists $N$ such that $(f_k)\subset X_N$ and $(f_k)$ is convergent with respect to the metric $d_N$.
\end{theorem}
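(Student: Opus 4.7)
The plan is to establish the two implications separately. The backward direction is essentially immediate from the definition of $\tau_{ind}$, while the forward direction requires the compactness results developed in the appendix.

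For \emph{sufficiency}, assume $(f_k) \subset X_N$ and $d_N(f_k, f) \to 0$. Given any $\tau_{ind}$-open neighbourhood $U$ of $f$, continuity of $\pi_N$ forces $U \cap X_N$ to be open in $(X_N, d_N)$ and to contain $f$, so $f_k \in U \cap X_N \subset U$ for all sufficiently large $k$. This is essentially the first observation in the proof of Lemma \ref{sequentialsp}.

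For \emph{necessity}, suppose $f_k \to f$ in $\tau_{ind}$. My first step is to observe that the subset $K := \{f\} \cup \{f_k : k \in \mathbb{N}\}$ is compact in $\tau_{ind}$: any open cover of $K$ contains a member $U$ with $f \in U$, and convergence forces $f_k \in U$ for all but finitely many $k$, leaving only finitely many remaining points to be covered by finitely many further members of the cover. Invoking Corollary \ref{compactset3}, this compact set is contained in $\overline{B}_N(0, r)$ for some $N \in \mathbb{N}$ and $r > 0$; in particular $(f_k) \subset X_N$ with $d_N(f_k, 0) \leq r$.

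It remains to upgrade containment to metric convergence. By Lemma \ref{relatcomp}, $\overline{B}_N(0, r)$ is compact in $(X_{N+1}, d_{N+1})$, so every subsequence of $(f_k)$ admits a further subsequence which is $d_{N+1}$-convergent to some limit $g$. The continuity of $\pi_{N+1}$ transfers this convergence to $\tau_{ind}$, and since $\tau_{ind}$ is Hausdorff (by the corollary following Lemma \ref{derivation}), the uniqueness of limits forces $g = f$. A standard subsequence argument in the metric space $(X_{N+1}, d_{N+1})$ then yields $d_{N+1}(f_k, f) \to 0$, so the index $N' = N+1$ satisfies the requirements of the theorem. The one substantive input beyond elementary point-set topology is Corollary \ref{compactset3}, which connects $\tau_{ind}$-compactness to boundedness in some $X_n$; this is where I expect the main subtlety to lie, but the result is already on the books.
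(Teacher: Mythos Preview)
Your argument is circular. You invoke Corollary \ref{compactset3} to conclude that the compact set $K=\{f\}\cup\{f_k\}$ lies in some $X_N$, but in the paper's logical order Corollary \ref{compactset3} is deduced \emph{from} Theorem \ref{convergentsequences} via Corollaries \ref{compactset} and \ref{compactset2}. Indeed, the key step in Corollary \ref{compactset} --- that a sequentially compact set must sit inside a single $X_N$ --- is proved by saying ``otherwise pick $f_k\notin X_k$; by the previous theorem this sequence has no convergent subsequence.'' That ``previous theorem'' is exactly the statement you are trying to prove. So when you write that Corollary \ref{compactset3} ``is already on the books,'' it is only on the books \emph{downstream} of the result at hand.

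The paper's own proof supplies precisely the missing direct argument: assuming $f_k\to f_\infty$ in $\tau_{ind}$, one shows by hand that $(f_k)\subset X_{N_0}$ for some $N_0$ (otherwise the complement of the sequence is $\tau_{ind}$-open and separates $f_\infty$ from the $f_k$), and then that $d_N(f_k,f_\infty)\to 0$ for some $N\ge N_0$ (otherwise one extracts a subsequence diverging in every $X_N$ and again builds an open set violating convergence). These bare-hands constructions are what you have bypassed. Once Theorem \ref{convergentsequences} is in place, your compactness route does give a clean re-derivation, and in fact the remaining part of your argument (Lemma \ref{relatcomp} plus the Hausdorff subsequence trick) is essentially how the paper packages the consequences afterwards --- but it cannot serve as the proof itself.
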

\begin{proof}
We already proved the only if part in Lemma \ref{sequentialsp}. Suppose now that $(f_k)$ is a convergent sequence in the inductive limit topology.
Write $f_\infty=\lim_{k\to\infty}f_k$ and let $n_0$ so that $f_\infty\in X_{n_0}$. Firstly we prove that there exists $N_0$ such that $(f_k)\subset X_{N_0}$. If this was not the case, by taking a subsequence if necessary, we may assume that for every $n$, $X_n\cap(f_k)$ is a finite set, and that the intersection is empty when $n\le n_0$.

Let $U=\mathcal O(\mathbb C^m,0)\setminus(f_k)$. The set $X_n\setminus U$ is finite for every $n\in\mathbb N$, therefore $U$ is open in the inductive limit topology.  It is clear that $f_\infty\in U$  but that the sequence $(f_k)$ is not eventually contained in $U$, contradicting $f_\infty=\lim_{k\to\infty}f_k$. The existence of $N_0$ follows.

Suppose now that for every $N\ge N_0$ we have $d_N(f_k,f_\infty)\not\to 0$. If $\varepsilon>0$ is fixed, then the sequence $f_k$ is not eventually contained in $B_N(f_\infty,\varepsilon)$. This follows from the fact that $B_N(f_\infty,\varepsilon)$ are relatively compact in $X_{N+1}$, plus the fact that convergence in $X_{N+1}$ implies convergence with respect to $\tau_{ind}$.

Let $(k_{n}^N)\subset \mathbb N$ be the subsequence obtained by removing all the indices $k$ for which $f_k\in B_N(f_\infty,\varepsilon)$.
Since $B_N(f_\infty,\varepsilon)\subset B_{N+1}(f_\infty,\varepsilon)$ it follows that
\[
(k_{n}^{N+1})\subset (k_{n}^N)\subset\dots\subset (k_{n}^{N_0}).
\]
Let $k_n:=k^{n+N_0}_n$. Then, for every $n\ge N-N_0$, we have that $f_{k_n}\not\in B_N(f_\infty,\varepsilon)$. Furthermore we notice that
\[
d_N(f_{k_n},f_\infty)\to\infty,\qquad 	\forall N\ge N_0.
\]
Suppose on the contrary that there exists $(k'_n)\subset (k_n)$ such that $d_N(f_{k'_n},f_\infty)\to M$. By the previous lemma, by taking a subsequence of $k'_n$ if necessary, we may assume that $f_{k'_n}\to g$ in $X_{N+1}$ with $g\neq f_\infty$.  Convergence in $X_{N+1}$ implies convergence in $\tau_{ind}$, therefore this is not possible.

The set $U=\mathcal O(\mathbb C^m,0)\setminus (f_{k_n})$ contains $f_\infty\in U$. Since $(f_k)\subset X_{N_0}$, given $N\ge N_0$ we have that $U\cap X_N=X_N\setminus(f_{k_n})$. The sequence $(f_{k_n})$ is divergent in $X_N$, thus $U\cap X_N$ is open in $X_N$ for $N\ge N_0$. Finally if $V$ is open in $X_{N+1}$ then $V\cap X_N$ is open in $X_N$. This proves that $U$ is an open neighborhood of $f_{\infty}$ in the inductive limit topology, which gives a contradiction. It follows that there exists $N_1$ such that $d_{N_1}(f_k,f_\infty)\to 0$.
\end{proof}
\begin{corollary}
\label{compactset}
A set $K\subset \mathcal O(\mathbb C^m,0)$ is sequentially compact in $\tau_{ind}$ if and only if there exists $N$ such that $K\subset X_N$ and is compact with respect to the metric topology of $X_N$.
\end{corollary}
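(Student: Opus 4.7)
The ``if'' direction is immediate: if $K\subset X_N$ is compact with respect to $d_N$, any sequence in $K$ has a $d_N$-convergent subsequence with limit in $K$, and since $d_N$-convergence implies convergence in $\tau_{ind}$ (the key observation inside Lemma~\ref{sequentialsp}), $K$ is sequentially compact in $\tau_{ind}$.

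For the ``only if'' direction, the plan has three parts, all driven by Theorem~\ref{convergentsequences}. First, I would show that $K\subset X_{N_0}$ for some $N_0$. If not, one picks $f_n\in K\setminus X_n$ for every $n$ and extracts a $\tau_{ind}$-convergent subsequence $(f_{n_k})$ via sequential compactness. By Theorem~\ref{convergentsequences} this subsequence must lie in some $X_L$, but then for $n_k\ge L$ one would have $f_{n_k}\in X_L\subset X_{n_k}$, contradicting the choice $f_{n_k}\notin X_{n_k}$.

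Next, I would produce $M\ge N_0$ and $r>0$ with $K\subset\overline B_M(0,r)$. Otherwise, for every $M\ge N_0$ there exists $f_M\in K$ with $d_M(f_M,0)>M$; sequential compactness and Theorem~\ref{convergentsequences} then provide a subsequence $(f_{M_k})\subset X_L$ with $d_L(f_{M_k},f)\to 0$ for some $f\in K$. Since $\varepsilon_{M_k}\le\varepsilon_L$ once $M_k\ge L$, we have $d_{M_k}(g,0)\le d_L(g,0)$ for every $g\in X_L$, whence
\[
M_k<d_{M_k}(f_{M_k},0)\le d_L(f_{M_k},0)\longrightarrow d_L(f,0)<\infty,
\]
contradicting $M_k\to\infty$. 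To conclude, Lemma~\ref{relatcomp} ensures that $\overline B_M(0,r)$ is compact in $(X_{M+1},d_{M+1})$, so it only remains to check that $K$ is $d_{M+1}$-closed. If $f_k\to f$ in $d_{M+1}$ with $f_k\in K$, then $f_k\to f$ in $\tau_{ind}$; sequential compactness of $K$ yields a $\tau_{ind}$-limit $g\in K$ of some subsequence, and the Hausdorffness of $\tau_{ind}$ forces $f=g\in K$. Hence $K$ is a closed subset of a compact set, so compact in $(X_{M+1},d_{M+1})$, and the statement holds with $N=M+1$.

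The delicate point is the boundedness step. One cannot simply work at the minimal level $N_0$: the remark following the definition of $\tau_{ind}$ shows, via $f_n=n(z/\varepsilon_1)^n$, that a sequence may $\tau_{ind}$-converge while its $d_{N_0}$-norms blow up. Allowing the shift to a possibly larger index $M$ is precisely what enables the diagonal comparison $d_{M_k}\le d_L$ to force a contradiction, and everything else is a fairly routine application of Theorem~\ref{convergentsequences} and Lemma~\ref{relatcomp}.
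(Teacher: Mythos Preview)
Your proof is correct and follows essentially the same route as the paper's: both establish $K\subset X_{N_0}$, then boundedness in some $X_{N_1}$, then use Lemma~\ref{relatcomp} plus closedness to conclude compactness in $X_{N_1+1}$. The differences are cosmetic---you unpack the contradiction in the boundedness step explicitly via the monotonicity $d_{M_k}\le d_L$, whereas the paper simply invokes Theorem~\ref{convergentsequences}; and for closedness you argue directly with Hausdorffness of $\tau_{ind}$, while the paper notes that sequential compactness in a sequential space gives closedness in $\tau_{ind}$, hence in each $X_N$.
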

\begin{proof}
Suppose that $K$ is sequentially compact in $\tau_{ind}$. First of all there exists $N_0$ such that $K\subset X_{N_0}$. If this was not the case, we could find a sequence $(f_k)\subset K$ such that $f_k\not\in X_k$. By the previous theorem, such sequence does not have any convergent subsequence with respect to $\tau_{ind}$, which contradicts sequential compactness.

Suppose that $K$ is unbounded in $X_N$ for every $k\ge N_0$. We can find $f_k\in K$ such that $d_k(f_k,0)> N$. The previous theorem proves that this sequence does not have convergent subsequence, which contradicts sequential compactness. This proves that $K$ is bounded in $X_{N_1}$ for some $N_1\ge N_0$. By Lemma \ref{relatcomp} the set $K$ is relatively  compact in $X_{N_1+1}$. Since the inductive limit topology is sequential, it follows that $K$ is a closed set in $\tau_{ind}$. This shows that $K$ is a compact set in $X_{N_1+1}$.

On the other hand since convergence in $X_N$ implies convergence in $\tau_{ind}$, a compact set $K\subset X_N$ is sequentially compact in $\tau_{ind}$.
\end{proof}
\begin{corollary}
\label{compactset2}
Compactness and sequential compactness are equivalent in $\mathcal O(\mathbb C^m,0)$.
\end{corollary}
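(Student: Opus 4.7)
The plan is to close the loop using the three facts already proved in the appendix: (i) $\mathcal O(\mathbb C^m,0)$ is a sequential Hausdorff space in $\tau_{ind}$ (Lemma \ref{sequentialsp} and its Hausdorff corollary); (ii) the cited proposition of Engelking that in such spaces sequential compactness and countable compactness are equivalent; and (iii) the structural description of sequentially compact sets given in Corollary \ref{compactset}.

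For the direction that sequential compactness implies compactness, I would argue directly from Corollary \ref{compactset}: if $K\subset\mathcal O(\mathbb C^m,0)$ is sequentially compact in $\tau_{ind}$, then there is some $N$ with $K\subset X_N$ and $K$ compact in the metric topology $(X_N,d_N)$. Since the inclusion $\pi_N\colon X_N\to\mathcal O(\mathbb C^m,0)$ is continuous by the very definition of the inductive limit topology, the image $\pi_N(K)=K$ is compact in $\tau_{ind}$.

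Conversely, assume $K$ is compact in $\tau_{ind}$. Compactness trivially implies countable compactness in any topological space, since a countable open cover is in particular an open cover and therefore admits a finite subcover. Because $\mathcal O(\mathbb C^m,0)$ is a sequential Hausdorff space, the proposition cited before Lemma \ref{relatcomp} turns countable compactness of $K$ into sequential compactness of $K$, completing the argument.

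The main obstacle is essentially bookkeeping: one must keep track of which topology each notion refers to, and in particular use the defining continuity of $\pi_N$ to move compactness from the Banach space $(X_N,d_N)$ into $(\mathcal O(\mathbb C^m,0),\tau_{ind})$. No additional analytic input beyond the results already established in the appendix is required.
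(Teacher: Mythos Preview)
Your proposal is correct and follows essentially the same approach as the paper. The only cosmetic difference is that for the direction ``sequentially compact $\Rightarrow$ compact'' the paper unwraps the definition of compactness (an open cover in $\tau_{ind}$ restricts to an open cover in $X_N$, hence admits a finite subcover), whereas you invoke the equivalent one-liner that continuous images of compact sets are compact via $\pi_N$; the other direction is handled identically in both.
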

\begin{proof}
Since $\mathcal O(\mathbb C^m,0)$ is a Hausdorff sequential space, we only have to prove that every sequentially compact set is compact. By the previous corollary if $K$ is sequentially compact then there exists $N$ such that $K\subset X_N$ and $K$ is compact in $X_N$. Let $\{U_\alpha\}$ be an open cover of $K$. By the definition of the inductive limit topology it follows that $\{U_\alpha\cap X_N\}$ is an open cover of $K$ in $X_N$. Using the compactness of $K$ in $X_N$ we can extract a finite open subcover $\{U_1,\dots,U_n\}$. This proves that $K$ is compact with respect to the topology $\tau_{ind}$.
\end{proof}
The following corollary is an immediate consequence of the previous ones.
\begin{corollary}
\label{compactset3}
A set $K\subset \mathcal O(\mathbb C^m,0)$ is compact in $\tau_{ind}$ if and only if there exists $N$ such that $K\subset X_N$ is compact with respect to the metric topology of $X_N$.
\end{corollary}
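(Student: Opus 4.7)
The plan is to deduce this directly by chaining the two preceding corollaries, since the statement just repackages them. There is essentially no obstacle here: the work has already been done in establishing Corollary \ref{compactset} (which characterizes sequentially compact subsets of $\tau_{ind}$ as compact subsets of some $X_N$) and Corollary \ref{compactset2} (which asserts that compactness and sequential compactness coincide in $\mathcal O(\mathbb C^m,0)$).

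For the forward direction, I would start by assuming $K$ is compact in $\tau_{ind}$. Applying Corollary \ref{compactset2}, $K$ is sequentially compact in $\tau_{ind}$. Then Corollary \ref{compactset} gives an $N$ so that $K\subset X_N$ and $K$ is compact in the metric topology of $X_N$.

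For the converse, if $K\subset X_N$ is compact with respect to the metric topology of $X_N$, then by Corollary \ref{compactset} it is sequentially compact in $\tau_{ind}$, and by Corollary \ref{compactset2} it is compact in $\tau_{ind}$. This completes the argument, which is a one-line combination with no real content beyond citing the previous results.
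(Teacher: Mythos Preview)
Your proposal is correct and matches the paper's approach exactly: the paper states that this corollary ``is an immediate consequence of the previous ones,'' i.e., it is obtained precisely by chaining Corollaries \ref{compactset} and \ref{compactset2} as you do.
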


We conclude this appendix by showing that the inductive limit topology is not metrizable. Notice that a similar proof also shows that this topology is not even first countable.
\begin{proposition}
The inductive limit topology is not metrizable.
\end{proposition}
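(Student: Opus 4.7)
The plan is to establish the stronger statement that $\tau_{ind}$ is not first countable at the origin; since metrizability forces first countability, this suffices. I would proceed by contradiction: assume there is a countable neighborhood base $\{V_k\}_{k\ge 1}$ of $0$, which may be taken decreasing. Since $V_k$ is open in $\tau_{ind}$, by the very definition of the inductive limit topology the intersection $V_k\cap X_k$ is open in the metric topology $(X_k,d_k)$ and contains $0$, so it contains some open ball $B_k(0,r_k)$ with $r_k>0$.

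The heart of the argument is a diagonal construction exploiting the strict inclusions $X_{k-1}\subsetneq X_k$. For each $k$, I would select a germ $f_k\in X_k\setminus X_{k-1}$ whose $d_k$-norm is smaller than $r_k$; a natural candidate is a small scalar multiple of the map $z\mapsto (1/(z_1-a_k),0,\dots,0)$, where $a_k$ is chosen with $\varepsilon_k<a_k<\varepsilon_{k-1}$ so that the pole lies outside $\mathbb B_{\varepsilon_k}$ but inside $\mathbb B_{\varepsilon_{k-1}}$. On $\mathbb B_{\varepsilon_k}$ this is bounded by $1/(a_k-\varepsilon_k)$, while on $\mathbb B_{\varepsilon_{k-1}}$ it is unbounded, so rescaling by a sufficiently small $\lambda_k>0$ yields a germ $f_k$ in $B_k(0,r_k)\cap (X_k\setminus X_{k-1})\subset V_k$.

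The contradiction then arises from comparing two properties of the sequence $(f_k)$. On the one hand, because $\{V_k\}$ is a decreasing neighborhood base, any open $U\ni 0$ contains some $V_N$, and so $f_k\in V_k\subset V_N\subset U$ for every $k\ge N$; in the assumed first countable topology this would force $f_k\to 0$. On the other hand, the choice $f_k\in X_k\setminus X_{k-1}$ implies $f_k\notin X_N$ for every $k>N$, so $(f_k)$ is not eventually contained in any single $X_N$. By Theorem \ref{convergentsequences} the sequence cannot converge in $\tau_{ind}$, contradicting the previous sentence.

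The only mildly delicate point is the explicit production of a germ in $X_k\setminus X_{k-1}$ of arbitrarily small $d_k$-norm; the pole-sliding recipe above handles this cleanly, since the scalar $\lambda_k$ can be taken as small as required without leaving the shell $X_k\setminus X_{k-1}$. Everything else is bookkeeping and a direct invocation of the sequential characterization of $\tau_{ind}$-convergence furnished by Theorem \ref{convergentsequences}.
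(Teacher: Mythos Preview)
Your argument is correct and follows essentially the same route as the paper's proof: both construct, for each $n$, a germ in $X_n\setminus X_{n-1}$ of arbitrarily small $d_n$-norm, diagonalize to obtain a sequence that is forced to converge to $0$ by the assumed countable neighborhood base (or metric), and then contradict Theorem~\ref{convergentsequences} because the sequence escapes every $X_N$. The only differences are cosmetic: you prove non-first-countability directly (the paper proves non-metrizability and remarks that the same argument gives non-first-countability), and you give an explicit pole-sliding construction of the germs $f_k\in X_k\setminus X_{k-1}$ where the paper simply asserts their existence.
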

\begin{proof}
Suppose on the contrary that there exists a metric $d_{ind}$ on $\mathcal O(\mathbb C^m,0)$ such that $\tau_{ind}$ coincides with the metric topology of $d_{ind}$. We will write $B_{ind}(f,r)$ for the open ball of center $f$ and radius $r$ with respect of this metric.

We note that for every $n$ we can construct a sequence $(f^n_k)_{k\in\mathbb N}\subset X_{n}\setminus X_{n-1}$, such that $\lim_{k\to\infty}d_n(f^n_k,0)=0$. Since convergence in $X_n$ implies convergence in $\tau_{ind}$, for every $n$ we can find $k_n$ such that
\[
f^n_{k_n}\in B_{ind}(0,1/n).
\]
Now let $g_n=f^n_{k_n}$. It is clear that $d_{ind}(g_n,0)\to 0$ as $n\to\infty$. By Theorem \ref{convergentsequences}, it follows that $(g_n)$ is contained in some $X_N$, which is not possible by the definition of the $f^n_k$-s. This contradicts the fact that $\tau_{ind}$ is metrizable.
\end{proof}
\printbibliography
\end{document}